\DeclareMathOperator{\supp}{supp}
\DeclareMathOperator{\diam}{diam}
\DeclareMathOperator{\av}{AV}
\DeclareMathOperator{\tv}{TV}
\DeclareMathOperator{\cpl}{Cpl}
\DeclareMathOperator{\cplbc}{Cpl_{bc}}
\DeclareMathOperator{\id}{id}
\DeclareMathOperator{\Id}{I}
\DeclareMathOperator{\law}{Law}
\DeclareMathOperator{\per}{Per}
\def\a{\mathcal{A}}
\def\n{\mathcal{N}}
\def\sp{\mathcal{P}}
\def\x{\mathcal{X}}
\def\y{\mathcal{Y}}
\def\w{\mathcal{W}}
\def\C{\mathbb{C}}
\def\E{\mathbb{E}}
\def\L{\mathcal{L}}
\def\N{\mathbb{N}}
\def\P{\mathbb{P}}
\def\R{\mathbb{R}}
\def\nab{\nabla}
\def\ep{\varepsilon}
\def\ep{\varepsilon}
\def\st{\;\vert\;}
\newlist{anumerate}{enumerate}{1}
\setlist[anumerate,1]{label=(\alph*)}
\newcommand{\abs}[1]{\left\vert#1\right\vert}
\newcommand{\norm}[1]{\left\Vert#1\right\Vert}
\newcommand{\ind}[1]{\mathbbm{1}_{#1}}
\newtheorem{thm}{Theorem}
\newtheorem{cor}[thm]{Corollary}
\newtheorem{df}[thm]{Definition}
\newtheorem{assume}[thm]{Assumption}
\newtheorem{prop}[thm]{Proposition}
\newtheorem{rmk}[thm]{Remark}
\newtheorem{lem}[thm]{Lemma}
\newtheorem{ex}[thm]{Example}
\begin{document}

\title{Bounding adapted Wasserstein metrics}
\author{Jose Blanchet, Martin Larsson, Jonghwa Park, Johannes Wiesel}
\date{\today}

\maketitle

\begin{abstract}
The Wasserstein distance $\mathcal{W}_p$ is an important instance of an optimal transport cost. Its numerous mathematical properties as well as applications to various fields such as mathematical finance and statistics have been well studied in recent years. The adapted Wasserstein distance $\mathcal{A}\mathcal{W}_p$ extends this theory to laws of discrete time stochastic processes in their natural filtrations, making it particularly well suited for analyzing time-dependent stochastic optimization problems.
    
    While the topological differences between $\mathcal{A}\mathcal{W}_p$ and $\mathcal{W}_p$ are well understood, their differences as metrics remain largely unexplored beyond the trivial bound $\mathcal{W}_p\lesssim \mathcal{A}\mathcal{W}_p$. This paper closes this gap by providing upper bounds of $\mathcal{A}\mathcal{W}_p$ in terms of $\mathcal{W}_p$ through investigation of the smooth adapted Wasserstein distance. Our upper bounds are explicit and are given by a sum of $\mathcal{W}_p$, Eder's modulus of continuity and a term characterizing the tail behavior of measures. As a consequence, upper bounds on $\mathcal{W}_p$ automatically hold for $\mathcal{AW}_p$ under mild regularity assumptions on the measures considered. A particular instance of our findings is the inequality $\mathcal{A}\mathcal{W}_1\le C\sqrt{\mathcal{W}_1}$ on the set of measures that have Lipschitz kernels. 

    Our work also reveals how smoothing of measures affects the adapted weak topology. In fact, we find that the topology induced by the smooth adapted Wasserstein distance exhibits a non-trivial interpolation property, which we characterize explicitly: it lies in between the adapted weak topology and the weak topology, and the inclusion is governed by the decay of the smoothing parameter.

    \medskip
    
    \noindent\emph{Keywords:} (adapted) Wassestein distance, optimal transport, modulus of continuity, relative compactness
\end{abstract}

\begin{section}{Introduction}\label{sec:intro}

For $N\in \N$ we denote by $\sp_p(\R^N)$ the set of all Borel probability measures on $\R^N$ that have finite $p$-moments, where $1\le p<\infty$ is fixed throughout this paper. The Wasserstein distance, defined via 
\begin{align}\label{eq:wass}
    \w_p(\mu, \nu)
    :=\left(\inf_{\pi \in \cpl(\mu, \nu)}\int_{\R^N\times \R^N} \abs{x-y}^p \pi(dx, dy)\right)^{1/p},
\end{align}
is a metric on $\sp_p(\R^N)$ that metrizes weak convergence plus convergence of $p$-moments. Here $\abs{\cdot}$ denotes the Euclidean norm on $\R^N$ and $\cpl(\mu, \nu)$ is the set of all couplings between $\mu\in\sp_p(\R^N)$ and $\nu\in \sp_p(\R^N)$, i.e., those probability measures on $\R^N\times \R^N$ whose first marginal is $\mu$ and whose second marginal is $\nu$. Computing the Wasserstein distance is an instance of the Kantorovich optimal transport problem \cite{kantorovich1942translocation}, and its numerous mathematical properties are well studied. We refer to \cite{villani2009optimal, villani2021topics, santambrogio2015optimal} for a general overview.

While the Wasserstein distance between laws on $\R^N$ has seen a surge of applications in the last few years, the situation changes  when $\mu,\nu$ are interpreted as laws of discrete-time stochastic processes in their natural filtrations. In this case, measuring the distance between $\mu$ and $\nu$ via $\w_p(\mu, \nu)$ is often inadequate when considering time-dependent optimization problems such as optimal stopping problems and multistage optimization. The key reason for this is that the formulation \eqref{eq:wass} does not take the time structure of the laws $\mu,\nu$ into account when optimizing over all possible couplings on $\R^N$. The restriction to so-called \emph{bicausal} or \emph{adapted} couplings described in the next paragraph addresses this issue.

Let us first  set up some notation: consider two stochastic processes $X=(X_1, \ldots, X_T)$ and $Y=(Y_1,\ldots, Y_T)$ in finite discrete time, where both $X_t$ and $Y_t$ are $\R^d$-valued random variables for $t\in \{1,2,\ldots, T\}$. Throughout the paper, $T\in \N$ denotes the number of time steps and $d\in \N$ is the dimension of the state space. Consider now a Monge map $S=(S_1, \ldots, S_T):(\R^d)^T\to (\R^d)^T$ from the law of $X$ to the law of $Y$, which satisfies
\begin{align}
    Y_t\stackrel{d}{=} S_t(X_1, \ldots, X_T) \quad \text{ for } t\in \{1,2,\ldots, T\}.
\end{align}
Without further restrictions on the map $S$, $Y_t$ can depend on the whole stochastic process $X$. In other words, it is impossible to  determine $Y_t$ only from the information $(X_1, \dots, X_t)$ as the map $S_t$ uses information about the evolution of $X$ \emph{after} time $t$ in general. In order to respect the filtration generated by $X$, it is reasonable to require $S$ to be adapted, i.e., $S_t$ should only be a function of the first $t$ coordinates. As shown e.g. in \cite{beiglbock2018denseness, beiglbock2022denseness}, the notion of adapted mappings is generalized to bicausal couplings as follows.

\begin{df}
Let $\mu$ and $\nu$ be two probability measures on $(\R^d)^T$. A coupling $\pi\in \cpl(\mu, \nu)$ is called \textit{bicausal} if for $(X,Y)\sim \pi$ and $t\in \{1,2,\ldots, T-1\}$,
\begin{align}
    (Y_1, \ldots, Y_t) \text{ and } (X_{t+1}, \ldots, X_T) \text{ are conditionally independent given } X_1, \ldots, X_t
\end{align}
and
\begin{align}
    (X_1, \ldots, X_t) \text{ and } (Y_{t+1}, \ldots, Y_T) \text{ are conditionally independent given } Y_1, \ldots, Y_t.
\end{align}
The set of all bicausal couplings between $\mu$ and $\nu$ is denoted by $\cplbc(\mu, \nu)$. 
\end{df}

Heuristically speaking, $\pi \in \cplbc(\mu, \nu)$ couples two processes $X\sim \mu$ and $Y\sim \nu$ such that predicting $Y$ at time $t$ requires at most the information of $X$ up to time $t$ and vice versa. We define the adapted Wasserstein distance as the optimal transport cost over bicausal couplings.
\begin{df}[The adapted Wassertein distance]\label{df:aw}
    Let $\mu, \nu\in \sp_p((\R^d)^T)$. The adapted Wasserstein distance between $\mu$ and $\nu$ is given by
    \begin{align}
        \a\w_p(\mu, \nu)
        :=\left(\inf_{\pi\in \cplbc(\mu, \nu)}\int_{(\R^d)^T\times (\R^d)^T} \sum_{t=1}^{T} \abs{x_t-y_t}^p \pi(dx, dy)\right)^{1/p}.
    \end{align}
\end{df}

Much of the previous work has been on characterizing the topology generated by $\a\w_p$; we provide a literature review of these results in Section \ref{sec:lit}. Here we only remark that by Definition \ref{df:aw} one can easily derive the inequality $\w_p\le C\a\w_p$ (for some constant $C>0$ depending only on $p, T$). As a consequence, the adapted Wasserstein topology is finer than the Wasserstein topology. The following example going back at least to \cite{backhoff2020adapted} shows that this inclusion is strict in general. We refer to it as the standard example and use it to illustrate our results throughout this paper.

\begin{ex}[Standard example]\label{ex:standardex}
    Let $\mu_{\ep}=\frac{1}{2}\delta_{(\ep, 1)}+\frac{1}{2}\delta_{(-\ep, -1)}$ and $\mu=\frac{1}{2}\delta_{(0,1)}+\frac{1}{2}\delta_{(0,-1)}$. As $\ep\to 0$ we have $\w_p(\mu, \mu_{\ep})=\ep\to 0$, but $\a\w_p(\mu, \mu_{\ep})=(\ep^p+2^{p-1})^{1/p}\nrightarrow 0$. 
\end{ex}

\begin{figure}[ht]
    \centering
    \includegraphics[width = 14cm, height = 4cm]{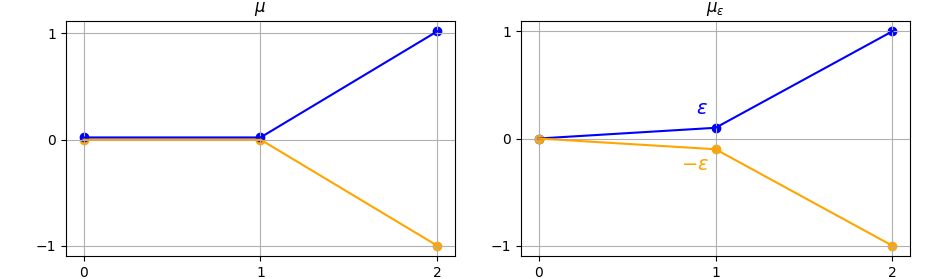}
    \caption{Paths of $\mu=\frac{1}{2}\delta_{(0,1)}+\frac{1}{2}\delta_{(0,-1)}$ and $\mu_{\ep}=\frac{1}{2}\delta_{(\ep, 1)}+\frac{1}{2}\delta_{(-\ep, -1)}$.} 
    \label{fig:example_path}
\end{figure}

In fact, the adapted Wasserstein topology is the coarsest topology that makes optimal stopping problems continuous \cite{backhoff2020all}. \cite{pflug2012distance, pflug2014multistage, glanzer2019incorporating, backhoff2020adapted, acciaio2020causal} demonstrate, that time-dependent optimization problems from mathematical finance such as utility maximization, pricing and hedging derivatives, risk measurements, etc., are actually Lipschitz continuous with respect to $\a\w_p$.

While the differences between the Wasserstein topology and the adapted Wasserstein topology are thus well understood, the comparison between the \emph{metrics} $\w_p$ and $\a\w_p$ is still largely open to the best of our knowledge. This article aims to fill this gap. We want to answer the following question:

\begin{tcolorbox}
Can we identify a class of subsets $K\subseteq \sp_p((\R^d)^T)$, on which $\a\w_p$ and $\w_p$ are nearly equivalent, i.e. there are ``nice" explicit upper bounds of $\a\w_p$ in terms of $\w_p$?
\end{tcolorbox}

Except for trivial bounds, the above is an ill-posed task for $K=\sp_p((\R^d)^T)$. In fact, Example~\ref{ex:standardex} clearly shows that $\a\w_p$ and $\w_p$ cannot be equivalent in any meaningful sense. However, this example is arguably tailor made to showcase the differences between $\w_p$ and $\a\w_p$. In fact, $\{\mu_{\ep}:\ep>0\}$ are chosen in such a way that the conditional distributions $x_1\mapsto \P(X^{\ep}_2\in \,\cdot\,|X^{\ep}_1=x_1)$ become increasingly irregular for $(X^{\ep}_1, X^{\ep}_2)\sim \mu_{\ep}$: recall that we have
\begin{align}
\P(X^{\ep}_2= 1|X^{\ep}_1=\epsilon)=1\quad \text{and} \quad \P(X^{\ep}_2=-1|X^{\ep}_1=-\epsilon)=1.
\end{align}
In other words, $\{\mu_{\ep}: \ep>0\}$ does \emph{not} have equicontinuous kernels. In many situations it is however reasonable to restrict to subsets  $K\subseteq \sp_p((\R^d)^T)$, whose elements have reasonably smooth kernels. It turns out that such subsets $K$ are exactly the $\a\w_p$-relatively compact sets.

\begin{prop}[Eder \cite{eder2019compactness}]\label{prop:awrelcpt}
A set $K\subseteq \sp_p((\R^d)^T)$ is $\a\w_p$-relatively compact if and only if $K$ is $\w_p$-relatively compact and
\begin{align}
\lim_{\delta\to 0}\sup_{\mu\in K}\omega^{t,p}_{\mu}(\delta)=0
\text{ for all } t\in \{1,2,\ldots, T-1\},
\end{align}
where $\omega^{t,p}_{\mu}:(0,\infty)\to (0,\infty)$ is \emph{the $p$-modulus of continuity of $\mu$} defined via
\begin{align}
\omega_{\mu}^{t,p}(\delta)
            :=\sup
            \left\{\left(\E[\w_p\left(\P(X_{t+1}\in \,\cdot\,\st X_1, \ldots, X_t), \P(Y_{t+1}\in \,\cdot\,\st Y_1, \ldots, Y_t)\right)^p]\right)^{1/p} \right\}.
\end{align}
Here, the supremum is taken over all $X,Y\sim \mu$ such that $\norm{(X_1, \ldots, X_t)-(Y_1, \ldots, Y_t)}_{L^p}<\delta$.
\end{prop}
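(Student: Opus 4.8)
\medskip
\noindent\textbf{Proof proposal.}

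The plan is to treat the two implications separately, using throughout the dynamic programming identity: writing $\mu=\mu_1\otimes\mu_{x_1}$ and $\nu=\nu_1\otimes\nu_{y_1}$ for the disintegrations of the time-$1$ coordinate and of the remaining $T-1$ coordinates,
\begin{align}
\a\w_p(\mu,\nu)^p=\inf_{\gamma\in\cpl(\mu_1,\nu_1)}\int\Big(|x_1-y_1|^p+\a\w_p(\mu_{x_1},\nu_{y_1})^p\Big)\,\gamma(dx_1,dy_1),
\end{align}
where on the right $\a\w_p$ refers to processes of length $T-1$. The structural consequences I use are: (i) it lets one assemble a bicausal coupling out of any choice of level-wise couplings, so competitor bicausal couplings can be built by hand; and (ii) for an \emph{optimal} bicausal coupling $(X,Z)$ of $\mu,\nu$, bicausality forces $\law(X_{t+1}\mid X_{\le t},Z_{\le t})=\law(X_{t+1}\mid X_{\le t})$ (and symmetrically), so $\|X_{t+1}-Z_{t+1}\|_{L^p}^p\ge\E[\w_p(k^\mu(X_{\le t}),k^\nu(Z_{\le t}))^p]$ with $k^\mu(\cdot)=\law_\mu(X_{t+1}\mid X_{\le t}=\cdot)$; since the left side is $\le\a\w_p(\mu,\nu)^p$, small adapted distance automatically pulls the $(t+1)$-st disintegration kernels $L^p$-close. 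I also use $\w_p\le C\a\w_p$, the equivalence of $\w_p$-relative compactness with tightness plus uniform $p$-integrability, and the standard facts that $\a\w_p$ is lower semicontinuous along $\w_p$-convergence and that $(\sp_p((\R^d)^T),\a\w_p)$ is complete.

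\emph{Necessity.} That $K$ is $\w_p$-relatively compact is immediate from $\w_p\le C\a\w_p$. For the modulus condition I argue by contradiction: if it fails at some $t$, there are $\eta>0$, $\delta_n\downarrow0$, $\mu_n\in K$ and couplings $(X^n,Y^n)$ with $X^n,Y^n\sim\mu_n$, $\|X^n_{\le t}-Y^n_{\le t}\|_{L^p}<\delta_n$, and $\E[\w_p(k^{\mu_n}(X^n_{\le t}),k^{\mu_n}(Y^n_{\le t}))^p]>\eta^p$. Pass to a subsequence with $\mu_n\to\mu$ in $\a\w_p$. Two points give the contradiction. First, for a single $\mu\in\sp_p$ the kernel $a\mapsto k^\mu(a)$ lies in $L^p(\mu_{\le t};(\sp_p(\R^d),\w_p))$ (its $p$-th $\w_p$-moment against $\mu_{\le t}$ equals $\E_\mu|X_{t+1}|^p$), so Lusin's theorem makes it uniformly continuous off a set of small measure; combining uniform continuity on a large compact set with uniform integrability of the single variable $\E_\mu[|X_{t+1}|^p\mid X_{\le t}]$ off the exceptional event — whose probability is bounded, via Markov, by a small-measure term and $(\delta/\lambda)^p$ — yields $\omega^{t,p}_\mu(\delta)\to0$ as $\delta\downarrow0$. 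Second, by (ii) an optimal bicausal coupling of $\mu_n$ with $\mu$ brings both coordinates and $t$-th kernels $L^p$-close. Glue the self-coupling $(X^n,Y^n)$ on one probability space with such a bicausal coupling along the $X^n$-copy and another along the $Y^n$-copy, obtaining $Z^n,W^n\sim\mu$ with $\|Z^n_{\le t}-W^n_{\le t}\|_{L^p}\to0$ and $\|\w_p(k^{\mu_n}(X^n_{\le t}),k^\mu(Z^n_{\le t}))\|_{L^p},\|\w_p(k^{\mu_n}(Y^n_{\le t}),k^\mu(W^n_{\le t}))\|_{L^p}\to0$. The triangle inequality for $\w_p$ then bounds $\E[\w_p(k^{\mu_n}(X^n_{\le t}),k^{\mu_n}(Y^n_{\le t}))^p]^{1/p}$ by those two vanishing terms plus $\|\w_p(k^\mu(Z^n_{\le t}),k^\mu(W^n_{\le t}))\|_{L^p}\le\omega^{t,p}_\mu(\|Z^n_{\le t}-W^n_{\le t}\|_{L^p})\to0$ by the first point — contradicting the lower bound $\eta$.

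\emph{Sufficiency.} I show $K$ is $\a\w_p$-totally bounded, which by completeness gives relative compactness; fix $\epsilon>0$. Using uniform $p$-integrability pick $R$ with $\sup_{\mu\in K}\int_{\{|x_s|>R\}}|x_s|^p\,d\mu$ small for each $s$, fix a partition of $\R^d$ into cells $A_i$ of diameter $<\eta$ inside $B_R$ together with $B_R^c$ collapsed to a boundary point, and let $\Phi$ be the induced coordinatewise rounding onto the resulting finite grid $G^T$. For $\mu\in K$ set $\hat\mu:=\Phi_*\mu$; iterated disintegration shows $\hat\mu$ is equivalently obtained by recursively replacing each conditional law by its cell-mass-weighted discretization. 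Although $(X,\Phi(X))$ is \emph{not} bicausal, I build a bicausal coupling of $\mu$ and $\hat\mu$ level by level through the identity above: at level one couple $x_1$ with $\mathrm{round}(x_1)$, paying $\le\eta^p+\int_{\{|x_1|>R\}}|x_1|^p\,d\mu$, and then couple $\mu_{x_1}$ with $\hat\mu_{\mathrm{round}(x_1)}=\widehat{\nu_{i(x_1)}}$ where $\nu_i=\law_\mu(X_{\ge2}\mid X_1\in A_i)$. Splitting $\a\w_p(\mu_{x_1},\widehat{\nu_i})\le\a\w_p(\mu_{x_1},\nu_i)+\a\w_p(\nu_i,\widehat{\nu_i})$, the second summand is of the same type for length $T-1$ and is handled by induction, the cell-mass weights making its averaged tail contribution telescope, via the tower property, into $\sum_s\int_{\{|x_s|>R\}}|x_s|^p\,d\mu$; the first summand $\a\w_p(\mu_{x_1},\nu_i)$ (for $x_1\in A_i$) is again estimated through the identity, its level-one part being, after averaging and using joint convexity of $\w_p^p$, at most $\omega^{1,p}_\mu(\eta)^p$, and its deeper parts bringing in $\omega^{s,p}_\mu$ at scales $\rho_s(\eta)$ built from $\eta$ and $\sup_{\mu\in K}\omega^{s',p}_\mu$ for $s'<s$, all of which tend to $0$ with $\eta$. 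Summing,
\begin{align}
\sup_{\mu\in K}\a\w_p(\mu,\hat\mu)^p\le C_{T,p}\Big(\eta^p+\sum_{s=1}^{T-1}\big(\textstyle\sup_{\mu\in K}\omega^{s,p}_\mu(\rho_s(\eta))\big)^p+\sum_{s=1}^{T}\textstyle\sup_{\mu\in K}\int_{\{|x_s|>R\}}|x_s|^p\,d\mu\Big),
\end{align}
which is $<\epsilon^p$ once $R$ is large and $\eta$ small, by the uniform modulus hypothesis. Finally each $\hat\mu$ is a probability measure on the fixed finite set $G^T$, and $(\sp(G^T),\a\w_p)$ is compact (image of a compact simplex under a continuous map, $\a\w_p$ being weakly continuous on a finite state space); covering it by finitely many $\epsilon$-balls produces a finite $2\epsilon$-net for $K$.

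\emph{Where the difficulty lies.} The genuine obstacle in both directions is that bicausality rules out the couplings one instinctively writes down — neither $(X,\Phi(X))$ for a coordinatewise rounding/truncation nor an ``independent re-draw within a cell'' is bicausal — so every competitor must be assembled level by level through the dynamic programming identity, with the bookkeeping arranged so that the conditional laws being compared stay \emph{aligned} (conditioned on the same cell) and so that the error terms generated deep in the recursion can be re-summed via the tower property. On the necessity side the corresponding delicate inputs are the single-measure statement $\omega^{t,p}_\mu(\delta)\to0$ — i.e.\ $L^p$-continuity of the disintegration kernel — and the automatic $L^p$-closeness of the kernels under an optimal bicausal coupling; both come from combining bicausality with that same identity.
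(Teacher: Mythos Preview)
Your argument is essentially correct but takes a very different route from the paper. The paper does \emph{not} prove the proposition from scratch: it invokes Eder's original characterization \cite[Theorem~3.2]{eder2019compactness}, which is stated in terms of the \emph{full-future} modulus $\bar\omega^{t,p}_\mu$ (built from $\w_p(\bar\mu_{x_{1:t}},\bar\mu_{y_{1:t}})$ with $\bar\mu_{x_{1:t}}=\law(X_{t+1:T}\mid X_{1:t})$), and then shows the two modulus conditions are equivalent. One direction is trivial from $\w_p(\mu_{x_{1:t}},\mu_{y_{1:t}})\le\w_p(\bar\mu_{x_{1:t}},\bar\mu_{y_{1:t}})$; the other is the paper's Lemma~\ref{lem:extmc_mc}, which bounds $\bar\omega^{t,p}_\mu(\delta)$ by an iterated sum $\sum_s g^{s,p}_{t,\mu}(\delta)$ with $g^{s,p}_{t,\mu}(\delta)=\omega^{s,p}_\mu(\delta+\sum_{\ell<s}g^{\ell,p}_{t,\mu}(\delta))$, obtained by extending a coupling of $\mu_{1:t}$ with itself one step at a time via $\w_p$-optimal one-step couplings.

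Your self-contained argument trades brevity for independence from Eder's paper. Two remarks. First, the iterated-modulus recursion you introduce in the sufficiency direction (the scales $\rho_s(\eta)$) is precisely the content of the paper's Lemma~\ref{lem:extmc_mc}; you are re-deriving it inside the discretization estimate rather than isolating it. Second, the sufficiency sketch is the softest part: the step where you split $\a\w_p(\mu_{x_1},\widehat{\nu_i})$ and then recurse on $\a\w_p(\nu_i,\widehat{\nu_i})$ requires relating the one-step moduli of the cell-averaged laws $\nu_i$ back to those of $\mu$ (they are not elements of $K$), and the tail bookkeeping when $|x_s|>R$ at some intermediate level needs the product-coupling fallback that the paper uses explicitly in Lemma~\ref{lem:cptapprox}. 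These are fillable, but as written they are asserted rather than shown. Your necessity argument---Lusin for the single-measure statement $\omega^{t,p}_\mu(\delta)\to0$, plus the gluing of the self-coupling with two optimal bicausal couplings via the kernel lower bound in your point~(ii)---is clean and correct.
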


Let us emphasize here that focusing on the $\a\w_p$-relatively compact subsets of $\sp_p((\R^d)^T)$ is usually not a big restriction in practical applications, and sometimes even arises naturally. We refer to Section \ref{sec:awtopology} for a more detailed discussion.

Proposition \ref{prop:awrelcpt} will prove central in this article, as we will essentially show that the inequality $\w_p\le C\a\w_p$ can be reversed exactly on the $\a\w_p$-relatively compact sets. To get some intuition for this, assume for now that $p=1$ and $K\subseteq \sp_{1}((\R^d)^T)$ is $\a\w_1$-compact. Then it essentially follows from \cite[Lemma 7]{pammer2024note}, that the Wasserstein topology and the adapted Wasserstein topology agree on $K$. As a consequence, for any $\sigma>0$ there exists a constant $C$ depending on $\sigma$ and $K$ such that we have
\begin{align}\label{eq:gudi}
\a\w_1(\mu,\nu)\le C\w_1(\mu,\nu) +\sigma
\end{align}
for all $\mu, \nu\in K$. While theoretically appealing, this result is of very limited practical relevance as long as one cannot characterize the dependence of $C$ on $\sigma$ and $K$. One of the main contributions of this article is to make the bound~\eqref{eq:gudi} explicit. In fact, we show that (ignoring dimensional constants that can be made explicit) for any positive real numbers $0<\sigma<R$, 
\begin{align}
\a\w_1(\mu,\nu)
\lesssim \frac{R}{\sigma}\w_1(\mu,\nu) + h(\sigma,K) + g(R,K),\label{eq:gudi_goal}
\end{align}
where the function $h$ is derived from Eder's modulus of continuity $\omega$ (see Proposition \ref{prop:awrelcpt}) and the function $g$ characterizes the tail behavior of measures in $K$ outside a Euclidean ball of radius $R$. In particular, $\lim_{\sigma\to 0} h(\sigma,K)=0$ and $\lim_{R\to \infty } g(R,K) =0$. We refer to Corollary \ref{cor:boundingAW} for an exact statement of this bound.

In other words, \eqref{eq:gudi_goal} implies that $\a\w_1$ can be bounded by $\w_1$ up to an error that is uniformly bounded on $\a\w_1$-relatively compact sets. This seemingly simple result shows its true power in applications: the time-dependent optimization problems (e.g. utility maximization, pricing, hedging) mentioned above are not just continuous in $\a\w_1$, but are actually \emph{continuous in $\w_1$} on $\a\w_1$-relatively compact sets and \emph{estimates are explicitly computable}. This significantly simplifies the analysis of time-dependent optimization problems. Indeed, it is actually not necessary to design new estimates specifically tailored for $\a\w_1$ as e.g. done in \cite{backhoff2022estimating}; instead one can simply rely on the rich theory that has already been developed for $\w_1$ in the last decades. We give some examples illustrating this new methodology below, showing how one can derive finite-sample guarantees for the adapted Wasserstein distance. We believe that  \eqref{eq:gudi_goal} has much further-reaching implications for the theory of adapted transports however, which we aim to investigate in future work.

\subsection{The smooth adapted Wasserstein distance}

In order to derive \eqref{eq:gudi_goal}, we first need to introduce a variant  of $\a\w_p$: 

\begin{df}
Given a noise distribution $\xi$ and a bandwidth parameter $\sigma>0$, we define \emph{the smooth adapted Wasserstein distance} via $$\a\w^{(\sigma)}_p(\mu, \nu):=\a\w_p(\mu\ast \xi_{\sigma}, \nu\ast \xi_{\sigma}).$$
Here $\ast$ denotes the convolution of two measures and $\xi_{\sigma}$ is the law of $(x\mapsto \sigma x)_{\#}\xi$, i.e. if $Z\sim \xi$, then $\sigma Z\sim \xi_{\sigma}$.
\end{df}

 The non-adapted counterpart of $\a\w^{(\sigma)}_p$ is called the smooth Wasserstein distance $$\w^{(\sigma)}_p(\mu, \nu):=\w_p(\mu\ast \xi_{\sigma}, \nu\ast\xi_{\sigma})$$ and is by now well studied in the statistics and machine learning literature, mainly for the purpose of relaxing the curse of dimensionality. Indeed, denoting the empirical measure of $\mu$ by $\boldsymbol{\mu_n}:=\frac{1}{n}\sum_{j=1}^n\delta_{X^{(j)}}$ where $X^{(1)}, X^{(2)}, \ldots$ are i.i.d samples of $\mu$, it turns out that $\sqrt{n}\w^{(\sigma)}_p(\mu, \boldsymbol{\mu_n})$ has a weak limit under suitable moment assumptions. We refer to \cite{goldfeld2020convergence, goldfeld2020gaussian, goldfeld2020asymptotic, sadhu2021limit, nietert2021smooth, goldfeld2024limit, goldfeld2024statistical} and the references therein for this line of research. On the other hand, $\a\w^{(\sigma)}_p$ is quite new to the best of our knowledge, and has only been studied very recently in the literature, cf. \cite{blanchet2024empirical, hou2024convergence}. These results suggest that, as in the non-adapted case, the empirical convergence rate only exhibits a mild dependence on the dimension after smoothing.

While the statistical properties of $\a\w^{(\sigma)}_p(\boldsymbol{\mu_n}, \mu)$ are thus at least partially understood, the arguably more fundamental question of how smoothing affects the adapted Wasserstein topology is still wide open to the best of our knowledge. Recall that it is well known that $\w_p$ and $\w^{(\sigma)}_p$ generate the same topology (see \cite{nietert2021smooth}), and it is thus natural to conjecture that the same holds for $\a\w_p$ and $\a\w^{(\sigma)}_p$. Perhaps rather surprisingly, we show that this conjecture is false in general. In fact we prove that, essentially,
\begin{align}
    \text{Wasserstein topology}
    \,\subseteq\,
    \text{smooth adapted Wasserstein topology}
    \,\subseteq\,
    \text{adapted Wasserstein topology},
\end{align}
and the inclusions in the above display are equalities under suitable assumptions on \textit{the decay of $\sigma$} (when thinking about sequential convergence). We return to this discussion in Section \ref{sec:intro_topology}.

\subsection{\texorpdfstring{Bounding $\a\w_1$ by $\w_1$}{TEXT}} \label{sec:intro_bound}

We are now in a position to give  a heuristic derivation of \eqref{eq:gudi_goal}. To keep the notation simple, we take $p=1$, $T=2$ and refer to Corollary \ref{cor:boundingAW} for the general statement. Assuming $K\subseteq \sp_1((\R^d)^2)$ is $\a\w_1$-relatively compact, an application of the triangle inequality yields
\begin{align}
    \a\w_1(\mu, \nu)
    \le \a\w^{(\sigma)}_1(\mu, \nu)
    +2\sup_{\mu\in K}\a\w_1(\mu, \mu\ast \xi_{\sigma})
    \label{eq:gudi_tri}
\end{align}
for any $\mu, \nu\in K$ and $\sigma>0$. In order to bound $\a\w_1$, it is thus sufficient to bound the two terms on the right-hand side of \eqref{eq:gudi_tri} separately. These bounds constitute the two main results of our paper and we refer the reader to Section \ref{sec:main results} for exact statements. We start with a bound on the smooth adapted Wasserstein distance.

\begin{thm}[Bounding $\a\w^{(\sigma)}_1$ by $\w_1$; special case of Theorem \ref{thm:awsigmaw1}] \label{thm_intro1}
For any $\sigma, R>0$ we have
\begin{align}
    \a\w^{(\sigma)}_1(\mu, \nu)
    \le C\left( \frac{R}{\sigma}\w_1(\mu, \nu)
    +\sup_{\mu\in K}\int_{\{\abs{x}\ge R\}}\abs{x} (\mu\ast\xi_{\sigma})(dx)\right)\label{eq:gudi_bdaw}
\end{align}
for some $C>0$ that depends only on a noise distribution $\xi$.
\end{thm}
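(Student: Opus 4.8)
I would first reduce to $T=2$: the conditional law of a smoothed measure given its first coordinate is again a smoothed measure on $(\R^d)^{T-1}$ (with the same noise restricted to the remaining coordinates), so the general case follows by applying the one-step estimate at each of the $T-1$ nesting steps. For $T=2$, write $\tilde\mu:=\mu\ast\xi_\sigma$, $\tilde\nu:=\nu\ast\xi_\sigma$, let $\tilde\mu_1,\tilde\nu_1$ be the first-coordinate marginals and $\tilde\mu_{x_1},\tilde\nu_{y_1}$ the disintegrations along the first coordinate. For any $\gamma\in\cpl(\tilde\mu_1,\tilde\nu_1)$, gluing $\gamma$ with $\w_1$-optimal couplings of $\tilde\mu_{x_1}$ and $\tilde\nu_{y_1}$ yields a bicausal coupling, so
\[
\a\w_1(\tilde\mu,\tilde\nu)\ \le\ \int|x_1-y_1|\,\gamma(dx_1,dy_1)\ +\ \int \w_1\big(\tilde\mu_{x_1},\tilde\nu_{y_1}\big)\,\gamma(dx_1,dy_1).
\]
Fix a $\w_1$-optimal $\pi\in\cpl(\mu,\nu)$, disintegrate it along its first coordinates as $\pi=\pi_1(da_1,db_1)\,q_{a_1,b_1}(da_2,db_2)$, and take $\gamma$ to be the law of $(A_1+\sigma Z_1,\,B_1+\sigma Z_1)$ with $(A_1,B_1)\sim\pi_1$ and $\sigma Z_1$ an independent draw from the first marginal of $\xi_\sigma$. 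Then $\gamma\in\cpl(\tilde\mu_1,\tilde\nu_1)$ and the first term equals $\int|a_1-b_1|\,\pi_1\lesssim \w_1(\mu,\nu)$, so everything reduces to the ``nested'' term.

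The point is the explicit form of the smoothed kernel. Writing $\phi^{(1)}_\sigma,\phi^{(2)}_\sigma$ for the marginal densities of $\xi_\sigma$ (assume, as a standing hypothesis, that $\xi$ has a density $\phi$ with $\|\nabla\phi\|_{L^1}<\infty$) and $p^\mu_\sigma(x_1):=\int\phi^{(1)}_\sigma(x_1-a_1)\,\mu_1(da_1)$ for the density of $\tilde\mu_1$, one has $\tilde\mu_{x_1}=\kappa_{x_1}\ast\phi^{(2)}_\sigma$ with
\[
\kappa_{x_1}=\int \mu^{b_1}_{a_1}\,w^\mu_{x_1}(a_1)\,\pi_1(da_1,db_1),\qquad w^\mu_{x_1}(a_1):=\frac{\phi^{(1)}_\sigma(x_1-a_1)}{p^\mu_\sigma(x_1)},
\]
where $\mu^{b_1}_{a_1}$ is the first marginal of $q_{a_1,b_1}$; symmetrically $\tilde\nu_{y_1}=\lambda_{y_1}\ast\phi^{(2)}_\sigma$ with weights $w^\nu_{y_1}(b_1)$. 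Since convolution is a $\w_1$-contraction, $\w_1(\tilde\mu_{x_1},\tilde\nu_{y_1})\le \w_1(\kappa_{x_1},\lambda_{y_1})$. I would couple $\kappa_{x_1}$ and $\lambda_{y_1}$ by a ``minimum-of-weights'' construction: on the common mass $\min\big(w^\mu_{x_1}(a_1),w^\nu_{y_1}(b_1)\big)\,\pi_1(da_1,db_1)$ transport $\mu^{b_1}_{a_1}$ to $\nu^{a_1}_{b_1}$ along $q_{a_1,b_1}$ itself, and couple the two leftovers (each of mass $1-m(x_1,y_1)$, where $m(x_1,y_1)=1-\tfrac12\int|w^\mu_{x_1}(a_1)-w^\nu_{y_1}(b_1)|\,\pi_1$) by a rescaled product. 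The common part costs at most $\iint|a_2-b_2|\,q_{a_1,b_1}\,w^\mu_{x_1}(a_1)\,\pi_1$, and here the normalisation cancels once we integrate over $\gamma$: since $\int w^\mu_{x_1}(a_1)\,\tilde\mu_1(dx_1)=\int\phi^{(1)}_\sigma(x_1-a_1)\,dx_1=1$, this integrates to exactly $\int|a_2-b_2|\,d\pi\lesssim \w_1(\mu,\nu)$, with no factor of $1/\sigma$.

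It remains to control the leftover cost. Bounding the rescaled-product coupling crudely, its cost is at most $\int|a_2|\,d\kappa^{\mathrm{lo}}_{x_1}+\int|b_2|\,d\lambda^{\mathrm{lo}}_{y_1}$, which I split at radius $R$: the parts inside the ball contribute $\lesssim R\big(1-m(x_1,y_1)\big)$, and the parts outside are $\le \int_{|a_2|>R}|a_2|\,d\kappa_{x_1}+\int_{|b_2|>R}|b_2|\,d\lambda_{y_1}$. Using the same cancellation, the outer parts integrate over $\gamma$ to $\int_{|a_2|>R}|a_2|\,d\mu_2+\int_{|b_2|>R}|b_2|\,d\nu_2$, which -- since smoothing by a light-tailed noise only fattens tails, up to an inessential change of $R$ -- is absorbed into $\sup_{\mu\in K}\int_{|x|\ge R}|x|\,d(\mu\ast\xi_\sigma)$. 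The inner part contributes $\lesssim R\cdot J$ with
\[
J:=\iint\big|w^\mu_{x_1}(a_1)-w^\nu_{y_1}(b_1)\big|\,\pi_1(da_1,db_1)\,\gamma(dx_1,dy_1),
\]
so the theorem reduces to the estimate $J\lesssim \tfrac1\sigma\,\w_1(\mu,\nu)$.

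For $J$ I would apply the triangle inequality to the ratios $w^\mu_{x_1}(a_1)$, $w^\nu_{y_1}(b_1)$. Two of the resulting terms succumb to classical smoothing estimates: changing $a_1\mapsto b_1$ in the numerator costs, after the $x_1$-integral cancels $p^\mu_\sigma(x_1)$, at most $\|\phi^{(1)}_\sigma(\cdot-a_1)-\phi^{(1)}_\sigma(\cdot-b_1)\|_{L^1}\le\tfrac1\sigma\|\nabla\phi^{(1)}\|_{L^1}|a_1-b_1|$, which integrates to $\lesssim\tfrac1\sigma\w_1(\mu,\nu)$; changing the normalisation $p^\mu_\sigma(x_1)\mapsto p^\nu_\sigma(x_1)$ at a fixed $x_1$ costs, after integrating $a_1\sim\mu_1$ and $x_1\sim\tilde\mu_1$, at most $\|\tilde\mu_1-\tilde\nu_1\|_{\mathrm{TV}}\le\tfrac1\sigma\|\nabla\phi^{(1)}\|_{L^1}\w_1(\mu_1,\nu_1)$ by Kantorovich duality. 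The remaining term -- moving the observation point from $x_1$ to $y_1$ inside the $\nu$-posterior weight -- is where I expect the real work: there the normalisation $p^\nu_\sigma$ is evaluated at a point distributed like $\tilde\mu_1$ rather than $\tilde\nu_1$, so it cannot be removed by a change of variables, and one must argue more carefully (e.g. by re-centring the Gaussian first, exploiting that $|x_1-y_1|=|A_1-B_1|$ is small $\gamma$-a.s., or by interposing an additional coupling of the two posterior laws). Finally, in the induction over $T$ I would apply the one-step estimate directly at each nesting step rather than composing the $(T-1)$-step version of the theorem (which would produce a suboptimal $(R/\sigma)^{T-1}$), and track the accumulation of tail terms, which only costs a factor $T$ overall.
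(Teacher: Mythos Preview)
Your approach is quite different from the paper's, and the paper's route is substantially shorter. Rather than building an explicit bicausal coupling and analysing posterior weights, the paper factors the bound through total variation in two steps. First, Proposition~\ref{prop:awtv} shows
\[
\a\w_p(\mu,\nu)^p\ \lesssim\ R^p\,\norm{\mu-\nu}_{\tv}+\int_{\{|x|\ge R\}}|x|^p\,d(\mu+\nu)
\]
by truncating to a ball of radius $R$ and invoking the Eckstein--Pammer equivalence $\av(\mu,\nu)\le\tfrac{2^T-1}{2}\norm{\mu-\nu}_{\tv}$ (Lemma~\ref{lem:tvav}). Second, Lemma~\ref{lem:tvsigmaw1} gives $\norm{\mu^\sigma-\nu^\sigma}_{\tv}\le\norm{\nabla f}_{L^1}\w_1(\mu,\nu)/\sigma$ directly from Kantorovich--Rubinstein duality, since $y\mapsto\int\varphi(x)f_\sigma(x-y)\,dx$ is $(\norm{\nabla f}_{L^1}/\sigma)$-Lipschitz for bounded $\varphi$. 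Combining these finishes the proof in a few lines, uniformly in $T$ and $p$; the only $T$-dependence is the $2^T$ constant from the $\av$--$\tv$ comparison, and the single factor $1/\sigma$ enters once, through the smoothing-$\tv$ bound.

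Your construction is in effect trying to reproduce both of those ingredients simultaneously through one explicit coupling, and you correctly flag where it bites. The ``remaining term'' in your bound for $J$ forces you to control $w^\nu_{y_1}(b_1)$ at $y_1$ replaced by $x_1\sim\tilde\mu_1$, so the ratio $p^\mu_\sigma/p^\nu_\sigma$ appears implicitly and cannot be cancelled by a change of variables; for Gaussian noise and point masses this ratio grows like $\exp(\epsilon x_1/\sigma^2)$, so any rescue here needs a delicate balance against the tail, which you have not supplied. Your iteration over $T$ is also left vague: ``applying the one-step estimate at each nesting step'' still requires feeding a $\w_1$-type control on the kernel mixtures $\kappa_{x_1},\lambda_{y_1}$ back into the recursion, and your own analysis of $J$ produces exactly that control only with an extra factor $1/\sigma$, so it is not clear how you avoid the $(R/\sigma)^{T-1}$ blow-up you warn against. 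Routing through $\norm{\cdot}_{\tv}$ as the paper does sidesteps both difficulties at once. As a minor point, your formula $\tilde\mu_{x_1}=\kappa_{x_1}\ast\phi^{(2)}_\sigma$ and the claimed reduction to $T=2$ tacitly assume the noise $\xi$ has independent coordinates; the paper's argument makes no such assumption.
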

In other words, $\a\w^{(\sigma)}_1$ can be bounded by $\w_1$ up to an error term depending only on the tails of $\mu\ast\xi_{\sigma}$. In the next section we show that the scaling of order $\w_1/\sigma$ in the upper bound is essentially optimal. Our second main result pertains to the question, how much information is lost by considering $\mu\ast \xi_{\sigma}$ instead of its unperturbed version $\mu$. 

\begin{thm}[Bandwidth effect of $\sigma$; special case of Theorem \ref{thm:muandmusigma}]\label{thm_intro2}
We have
\begin{align}
    \sup_{\mu\in K}\a\w_1(\mu, \mu\ast \xi_{\sigma})
    \le C\left( \sigma+\sup_{\mu\in K}\omega^{1,1}_{\mu}(\sigma)\right)\label{eq:gudi_sigma}
\end{align}
for some $C>0$ that depends only on a noise distribution $\xi$.
\end{thm}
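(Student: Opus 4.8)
The plan is to construct an explicit bicausal coupling between $\mu$ and $\mu\ast\xi_\sigma$ step by step in time, using the disintegration of $\mu$ into kernels, and to estimate the resulting transport cost by combining a ``local'' error (the displacement induced by adding noise at the current time) with a ``propagated'' error (the mismatch in the conditioning histories carried over from previous steps), which is precisely where Eder's modulus of continuity $\omega^{1,1}_\mu$ enters. For concreteness take $T=2$; the general case follows by iterating the same construction. Write $\mu_1$ for the first marginal of $\mu$ and $\mu_2(\cdot\mid x_1)$ for the conditional law of $X_2$ given $X_1=x_1$, and similarly let $\xi_\sigma = \xi_\sigma^{(1)}\otimes\xi_\sigma^{(2)}$ be the product noise over the two coordinates (each of scale $\sigma$). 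I would let $(X_1,X_2)\sim\mu$, draw independent noises $(Z_1,Z_2)\sim\xi_\sigma^{(1)}\otimes\xi_\sigma^{(2)}$, and set $\widetilde X_1 := X_1 + Z_1$. This already gives a coupling of $\mu_1$ and $(\mu\ast\xi_\sigma)_1 = \mu_1\ast\xi_\sigma^{(1)}$ with cost $\E|Z_1|\lesssim\sigma$.

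The second step is the delicate one. Given the realized histories $x_1$ and $\widetilde x_1 = x_1+z_1$, the target conditional law for the noised process is $\mu_2(\cdot\mid\widetilde x_1)\ast\xi_\sigma^{(2)}$, whereas the natural thing available from $X$ is $X_2\sim\mu_2(\cdot\mid x_1)$. So I would couple $\mu_2(\cdot\mid x_1)$ to $\mu_2(\cdot\mid\widetilde x_1)$ via a $\w_1$-optimal coupling, say producing $X_2'\sim\mu_2(\cdot\mid\widetilde x_1)$ from $X_2$, and then set $\widetilde X_2 := X_2' + Z_2$. Crucially this construction is bicausal: the second-coordinate randomization for $\widetilde X$ depends only on $(x_1,\widetilde x_1)$ and fresh noise, not on $X_2$ itself beyond what the optimal coupling uses, and symmetrically; one checks the conditional-independence requirements of the bicausality definition directly from the Markovian layer-by-layer construction.

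The cost of this coupling is then bounded by
\begin{align}
\a\w_1(\mu,\mu\ast\xi_\sigma)
&\le \E|X_1-\widetilde X_1| + \E|X_2-\widetilde X_2|\notag\\
&\le \E|Z_1| + \E|X_2 - X_2'| + \E|Z_2|\notag\\
&\lesssim \sigma + \E\big[\w_1\big(\mu_2(\cdot\mid X_1),\,\mu_2(\cdot\mid X_1+Z_1)\big)\big].
\end{align}
For the last term, observe that $(X_1, X_1+Z_1)$ is a coupling of $\mu_1$ with itself whose $L^1$-displacement is at most $\E|Z_1|\lesssim\sigma$; hence, applying $\mu$ twice in the definition of $\omega^{1,1}_\mu$ (with $X = (X_1, X_2) \sim \mu$ and $Y$ the process whose first coordinate is $X_1+Z_1$ and whose second coordinate is drawn from $\mu_2(\cdot\mid X_1+Z_1)$), the expectation is at most $\omega^{1,1}_\mu(C\sigma)$ for a dimensional constant $C$. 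Taking the supremum over $\mu\in K$ gives \eqref{eq:gudi_sigma}.

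The main obstacle I anticipate is verifying bicausality of the constructed coupling cleanly — in particular making sure that the optimal coupling used at the second layer can be chosen measurably in $(x_1,\widetilde x_1)$ (a measurable-selection argument for the $\w_1$-optimal plan, standard but requiring care) and that the resulting joint law genuinely satisfies both conditional-independence conditions rather than just the causal one in the $X\to\widetilde X$ direction. A secondary technical point is bookkeeping the dimensional constants relating $\sum_t|\cdot|$ on $(\R^d)^T$ to the per-coordinate Euclidean norms, and handling the scale factor when $\w_1$ between the two kernels is fed into $\omega^{1,1}_\mu$, whose definition uses an $L^p$ constraint on the histories with a strict inequality $<\delta$; one absorbs this by an infimum/limit or by enlarging the argument slightly. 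For general $T$ one iterates: at layer $t$ the history mismatch accumulated so far is controlled, one couples the $t$-th kernels optimally, adds noise $Z_t$, and the cost picks up one more $\sigma$ term plus one more $\omega^{t,1}_\mu$ contribution evaluated at the current accumulated mismatch — which a short induction shows stays of order $\sigma + \sum_{s<t}\omega^{s,1}_\mu(\cdot)$, yielding the stated bound up to $T$-dependent constants.
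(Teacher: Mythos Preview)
There is a genuine gap: your constructed process $\widetilde X=(\widetilde X_1,\widetilde X_2)$ does not have law $\mu\ast\xi_\sigma$. The conditional law of the smoothed process at the second step is \emph{not} $\mu_2(\cdot\mid\widetilde x_1)\ast\xi_\sigma^{(2)}$; for $(Y_1,Y_2)=(X_1+Z_1,X_2+Z_2)$ one has
\[
\P(Y_2\in\cdot\mid Y_1=\widetilde x_1)
=\int \big(\mu_2(\cdot\mid x_1)\ast\xi_\sigma^{(2)}\big)\,\rho(dx_1\mid\widetilde x_1),
\]
where $\rho(\cdot\mid\widetilde x_1)$ is the posterior of $X_1$ given $X_1+Z_1=\widetilde x_1$. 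Your $\widetilde X_2$ is instead drawn from $\mu_2(\cdot\mid\widetilde x_1)\ast\xi_\sigma^{(2)}$, so the pair $(X,\widetilde X)$ is not in $\cpl(\mu,\mu\ast\xi_\sigma)$ at all. A related symptom: the kernel $\mu_2(\cdot\mid\widetilde x_1)$ is only defined $\mu_1$-a.e., yet $\widetilde x_1=x_1+z_1$ is distributed according to $\mu_1\ast\xi_\sigma^{(1)}$, so the construction need not even make sense. The same issue resurfaces when you invoke $\omega^{1,1}_\mu$: you assert that $(X_1,X_1+Z_1)$ is a coupling of $\mu_1$ with itself, but $X_1+Z_1\sim\mu_1\ast\xi_\sigma^{(1)}\neq\mu_1$, so the definition of the modulus (which requires $X,Y\sim\mu$) does not apply.

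The paper resolves both problems at once by building the bicausal coupling in the reverse direction, from $\mu^\sigma$ to $\mu$, via near-optimal adapted Monge maps $G^\ep$ (Lemma~\ref{lem:epcoupling}). At level $t$ it uses the exact mixture representation of $(\mu^\sigma)_{x_{1:t}}$ as an average of $\mu_{z_{1:t}}\ast(\xi_\sigma)_{x_{1:t}-z_{1:t}}$ over the posterior of $z_{1:t}$, and then applies convexity of $\w_p^p$. The key payoff is that after transporting by $G^\ep$ one lands back in $\mu$: the kernel comparison becomes $\w_p(\mu_{X_{1:t}},\mu_{G^\ep(X_{1:t}+\sigma Z_{1:t})})$ with \emph{both} arguments having law $\mu_{1:t}$, which is precisely what is required to invoke $\omega^{t,p}_\mu$. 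Your plan has the right high-level intuition (layerwise coupling, noise term plus kernel-mismatch term), but the missing ingredient is this return-to-$\mu$ step via the posterior mixture and the Monge map.
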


In conclusion, the difference between $\mu$ and its smoothed version $\mu\ast \xi_{\sigma}$ is bounded by Eder's modulus of continuity $\omega^{1,1}_{\mu}$ uniformly on the $\a\w_1$-relatively compact sets.

Combining Theorems \ref{thm_intro1} and \ref{thm_intro2} with  \eqref{eq:gudi_tri} finally yields the following result:

\begin{cor}[Bounding $\a\w_1$ by $\w_1$; special case of Corollary \ref{cor:boundingAW}]
For all $0<\sigma<R$ we have
\begin{align}
    \a\w_1(\mu, \nu)
    \le C\left( \frac{R}{\sigma}\w_1(\mu, \nu)
    +\left(\sigma+\sup_{\mu\in K}\omega^{1,1}_{\mu}(\sigma)\right)
    +\sup_{\mu\in K}\int_{\{\abs{x}\ge R\}}\abs{x}\mu(dx)\right)
\end{align}
for some $C>0$ that depends only on $d$.
\end{cor}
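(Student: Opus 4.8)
The plan is to derive the corollary directly from the two main theorems via the triangle-inequality decomposition \eqref{eq:gudi_tri}; essentially no new idea is needed beyond a small bookkeeping step that reconciles a smoothed tail term with an unsmoothed one.

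First I would start from \eqref{eq:gudi_tri}, which already gives, for all $\mu,\nu\in K$ and $\sigma>0$,
\[
\a\w_1(\mu,\nu) \le \a\w^{(\sigma)}_1(\mu,\nu) + 2\sup_{\mu\in K}\a\w_1(\mu,\mu\ast\xi_\sigma).
\]
To the first term on the right I apply Theorem \ref{thm_intro1}, and to the second Theorem \ref{thm_intro2}; absorbing the constant $2$ and the implicit dimensional constants into $\lesssim$, this yields, for all $0<\sigma<R$,
\[
\a\w_1(\mu,\nu) \lesssim \frac{R}{\sigma}\,\w_1(\mu,\nu) + \sup_{\mu\in K}\int_{\{\abs{x}\ge R\}}\abs{x}\,(\mu\ast\xi_\sigma)(dx) + \sigma + \sup_{\mu\in K}\omega^{1,1}_\mu(\sigma).
\]
This is already the claimed inequality except that the tail term is expressed through the smoothed measure $\mu\ast\xi_\sigma$ rather than $\mu$ itself.

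The remaining step is therefore to bound $\int_{\{\abs{x}\ge R\}}\abs{x}\,(\mu\ast\xi_\sigma)(dx)$ by a constant multiple of $\sigma+\int_{\{\abs{x}\ge R\}}\abs{x}\,\mu(dx)$, uniformly over $\mu\in K$. Writing $\mu\ast\xi_\sigma=\law(X+\sigma Z)$ with $X\sim\mu$ and $Z\sim\xi$ independent, I would estimate
\[
\int_{\{\abs{x}\ge R\}}\abs{x}\,(\mu\ast\xi_\sigma)(dx) = \E\!\big[\abs{X+\sigma Z}\,\ind{\abs{X+\sigma Z}\ge R}\big] \le \E\!\big[(\abs{X}+\sigma\abs{Z})\,\ind{\abs{X}+\sigma\abs{Z}\ge R}\big],
\]
and then split the last expectation over $\{\abs{X}\ge R/2\}$ and its complement. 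On $\{\abs{X}\ge R/2\}$ the contribution is dominated by $\int_{\{\abs{x}\ge R/2\}}\abs{x}\,\mu(dx)+\sigma\,\E\abs{Z}$; on the complement the indicator forces $\sigma\abs{Z}\ge R/2$, and since $\sigma<R$ the finite first moment of $\xi$ (together with Markov's inequality applied to $\abs{Z}$) bounds this contribution by a constant multiple of $\sigma$. This produces a threshold $R/2$ instead of $R$, but that is harmless: since the statement is quantified over all $R$ with $0<\sigma<R$, one simply reapplies the displayed inequality with $R$ replaced by $2R$ (legitimate because $\sigma<R<2R$), which turns the threshold back into $R$ and absorbs the spurious factor $2$ into the $\tfrac{R}{\sigma}$-term under $\lesssim$. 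Substituting back then gives exactly the claimed bound.

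The only genuine obstacle is this last tail reconciliation, and it is entirely routine; the point that requires care is that every constant introduced in it depends only on $\xi$ (and on $p,T$ in the general statement of Corollary \ref{cor:boundingAW}), and never on $\sigma$, $R$, $\mu$, or $\nu$ — otherwise the uniformity over $K$ and the explicit $\sigma,R$-dependence that make the bound useful in applications would be destroyed. Everything else is a direct assembly of Theorems \ref{thm_intro1} and \ref{thm_intro2} through the triangle inequality.
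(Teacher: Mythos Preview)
Your proposal is correct and follows essentially the same route as the paper: triangle inequality \eqref{eq:gudi_tri}, then Theorems~\ref{thm_intro1} and~\ref{thm_intro2}, then a tail reconciliation to pass from $\mu\ast\xi_\sigma$ to $\mu$. The only difference is in that last bookkeeping step: the paper exploits the freedom in the auxiliary noise and simply \emph{chooses} $\xi$ to be supported in the unit ball, so that $\abs{x+\sigma z}\ge 2R$ with $\sigma<R$ forces $\abs{x}\ge R$ directly, whereas you keep $\xi$ general and use a splitting plus Markov argument---both are routine and yield the same bound.
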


Returning to \eqref{eq:gudi_goal}, we conclude that we can choose
\begin{align}
    h(\sigma, K)
    =\sigma+\sup_{\mu\in K}\omega^{1,1}_{\mu}(\sigma),\quad
    g(R, K)
    =\sup_{\mu\in K}\int_{\{\abs{x}\ge R\}}\abs{x}\mu(dx).
\end{align}

To summarize, the functions $h,g$ are remarkably explicit and exactly mirror Eder's characterization of $\a\w_1$-relatively compact sets as stated in Proposition \ref{prop:awrelcpt}. In particular, $g$ controls $\w_1$-relative compactness of $K$, while $h$ controls the regularity of the kernels in $K$. 
The proof of the general result stated in Corollary \ref{cor:boundingAW} essentially proceeds in the same fashion as described above, iterating over time periods $t=1, \dots, T-1$.

\subsection{Comparison of Wasserstein topologies }\label{sec:intro_topology} 

To the best of our knowledge, general properties of the topology generated by $\a\w_p^{(\sigma)}$ have not been studied. Given our explicit estimates in Theorems \ref{thm_intro1} and \ref{thm_intro2}, we can now address this question in great generality. To make our results concise, we consider a sequence $(\mu_n)_{n\in \N}$ in $\sp_p((\R^d)^T)$ and an additional measure $\mu\in \sp_p((\R^d)^T)$. We  refer to Section \ref{sec:topology} for exact statements.

Perhaps surprisingly, for fixed $\sigma>0$, the topology generated by $\a\w^{(\sigma)}_p$ is in fact the (vanilla) Wasserstein topology, i.e. 
\begin{align*}
\lim_{n\to \infty} \a\w^{(\sigma)}_p(\mu,\mu_n) =0 \quad 
\Leftrightarrow \quad \lim_{n\to \infty} \w^{(\sigma)}_p(\mu,\mu_n) =0 \quad
\Leftrightarrow \quad \lim_{n\to \infty} \w_p(\mu,\mu_n) =0.
\end{align*}
In this sense, all adapted topologies are equal as famously stated in \cite{backhoff2020adapted}, \emph{but only if there is no uncertainty about the observed distributions $\mu, \mu_n$}. As soon as these distributions are perturbed by some (arbitrarily small) independent noise $\xi_\sigma$, there is virtually no difference between the Wasserstein topology and the adapted Wasserstein topology. Phrased differently, in this case the finer structure of the adapted Wasserstein topology is absorbed by the noise $\xi_\sigma$.

The picture is different when one considers variable noise $(\xi_{\sigma_n})_{n\in \N}$ instead. Now there is an interesting tradeoff between the size of the noise $\xi_{\sigma_n}$ and the convergence speed of $\w_1(\mu,\mu_n)$ on the one hand (cf. Theorem~\ref{thm_intro1}) and the regularity of kernels on the other hand (cf. Theorem~\ref{thm_intro2}). Figure~\ref{fig:implication} illustrates this tradeoff in full generality. Here, the function $h^{t,p}_{\mu_n}$ is an \textit{iterated modulus of continuity}, e.g., $h^{1,p}_{\mu_n}(\sigma_n)=\omega^{1,p}_{\mu_n}(\sigma_n)$, $h^{2,p}_{\mu_n}(\sigma_n)=\omega^{2,p}_{\mu_n}(\sigma_n+h^{1,p}_{\mu_n}(\sigma_n))$ and $h^{3,p}_{\mu_n}(\sigma_n)=\omega^{3,p}_{\mu_n}(\sigma_n+h^{1,p}_{\mu_n}(\sigma_n)+h^{2,p}_{\mu_n}(\sigma_n))$. See Theorem~\ref{thm:muandmusigma} for the precise definition of $h^{t,p}_{\mu_n}$.

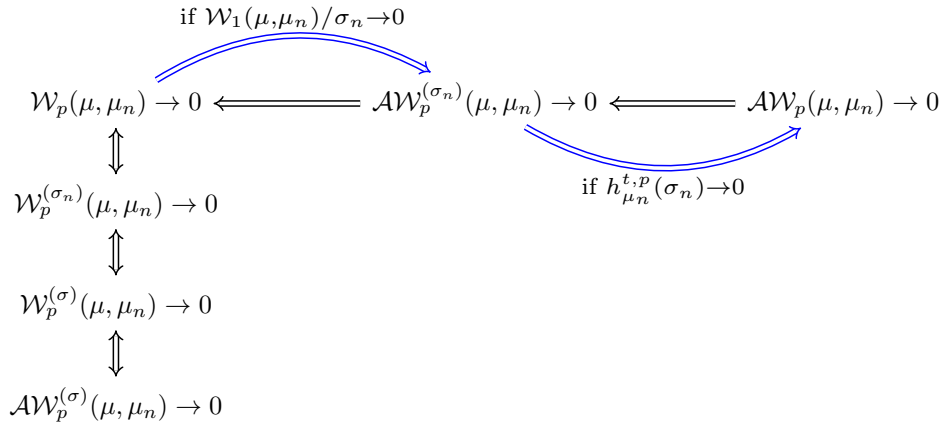
\begin{figure}[ht]
    \centering
    \begin{tikzcd}[arrows={line width=0.2mm}]
    &&\w_p(\mu, \mu_n)\to 0\arrow[Leftarrow]{rr} \arrow[blue, Rightarrow, bend left]{rr}[black,swap, above]{\scalebox{1.25}{$\text{if } \w_1(\mu, \mu_n)/\sigma_n\to 0$}}
    && \a\w^{(\sigma_n)}_p(\mu, \mu_n)\to 0 \arrow[Leftarrow]{rr}\arrow[blue, Rightarrow, bend right]{rr}[black,swap]{\scalebox{1.25}{$\text{if } h^{t,p}_{\mu_n}(\sigma_n)\to 0$}}  
    && \a\w_p(\mu, \mu_n)\to 0\\
    &&\w^{(\sigma_n)}_p(\mu, \mu_n)\to 0 \arrow[Leftrightarrow]{u}\\
    &&\w^{(\sigma)}_p(\mu, \mu_n)\to 0 \arrow[Leftrightarrow]{u}\\
    &&\a\w^{(\sigma)}_p(\mu, \mu_n)\to 0 \arrow[Leftrightarrow]{u}
    \end{tikzcd}
    \caption{Comparison of topologies.}
    \label{fig:implication}
\end{figure}

In the sense of Figure \ref{fig:implication}, convergence in $\a\w^{(\sigma_n)}_p$ is in general neither the same as convergence in $\w_p$ nor convergence in $\a\w_p$ and the smoothed adapted Wasserstein distance $\a\w^{(\sigma_n)}_p$ exhibits a non-trivial \emph{interpolation property} between the different topologies. 
To motivate the above results, it is perhaps most instructive to return to the standard Example \ref{ex:standardex}, where we can compute all relevant quantities explicitly. By convolving $\mu_{\ep_n}=\frac{1}{2}\delta_{(\ep_n, 1)}+\frac{1}{2}\delta_{(-\ep_n, -1)}$ with Gaussian noise $\xi_{\sigma_n}=\n(0, \sigma^2_n\Id_{2})$, we see from Figure \ref{fig:2dfigure} that kernels of $\mu_{\ep_n}$ are smoothed out. As $\ep_n\to 0$, we can show that
\begin{anumerate}
    \item $\frac{\ep_n}{\sigma_n}\to 0$ if and only if $\frac{\w_1(\mu,\mu_{\ep_n})}{\sigma_n}\to 0$ if and only if $\a\w^{(\sigma_n)}_p( \mu, \mu_{\ep_n})\to 0$, 
    \item $\frac{\ep_n}{\sigma_n}\to \infty$ if and only if $\omega^{1,p}_{\mu_{\ep_n}}(\sigma_n)\to 0$ if and only if $\a\w_p(\mu_{\ep_n}, \mu_{\ep_n}\ast \xi_{\sigma_n})\to 0$;
\end{anumerate}
see Appendix \ref{appendix:pfex} for details. Roughly speaking, if $\sigma_n$ decays slowly enough (so that $\ep_n/\sigma_n\to 0$), the behavior of $\a\w^{(\sigma_n)}_p(\mu, \mu_{\ep_n})$ mirrors the one of $\w_p(\mu, \mu_{\ep_n})$. Similarly, if $\sigma_n$ decays fast enough (so that $\ep_n/\sigma_n\to \infty$), we can use the triangle inequality to see that $\a\w^{(\sigma_n)}_p(\mu, \mu_{\ep_n})$ and $\a\w_p(\mu, \mu_{\ep_n})$ converge toward the same value. In this sense, the tradeoff illustrated in Figure \ref{fig:implication} captures exactly the right scaling of $\w_p(\mu, \mu_{\ep_n})$ and $\sigma_n$, at least for the standard example.

\begin{figure}[ht]
\centering
\begin{subfigure}{0.48\textwidth}
    \includegraphics[width=\textwidth]{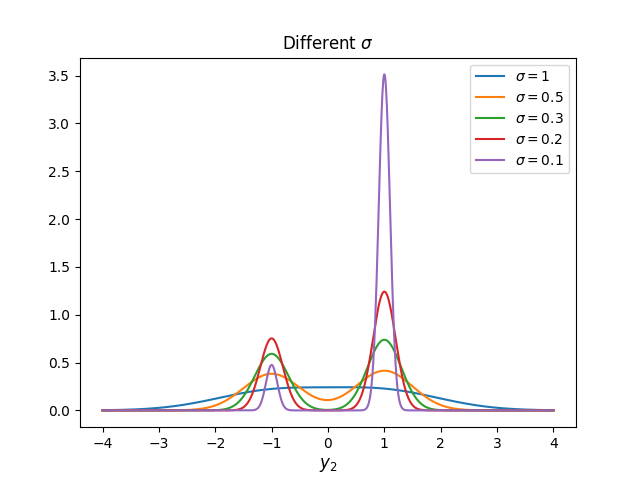}
    \caption{$\ep=0.1$.}
    \label{fig:diffsig}
\end{subfigure}
\hfill
\begin{subfigure}{0.48\textwidth}
    \includegraphics[width=\textwidth]{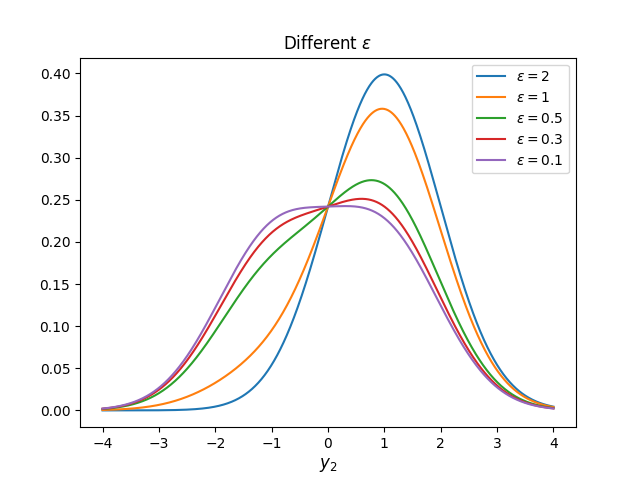}
    \caption{$\sigma=1$.}
    \label{fig:diffep}
\end{subfigure}
        
\caption{Density functions of measures $\P(Y_2\in dy_2 \st Y_1=\ep)$ for $(Y_1, Y_2)\sim \mu^{\sigma}_{\ep}$.}
\label{fig:2dfigure}
\end{figure}

% \subsection{Applications: stability of functionals and statistics}

\subsection{Related work} \label{sec:lit}

Adapted variants of the Wasserstein distance have been studied independently by many authors. In \cite{Al81}, Aldous introduced the notion of \textit{extended weak convergence}, which is essentially induced by the joint law between the original process and its corresponding prediction process. Further derivations of adapted topologies can be found in \cite{hoover1984adapted, ruschendorf1985wasserstein, hellwig1996sequential, lassalle2018causal, acciaio2019extended, bonnier2023adapted} and the references therein. Remarkably, \cite{backhoff2020all} shows that all these seemingly different approaches define the same topology, which we refer to as the adapted Wasserstein topology throughout this paper. Based on the \textit{information topology} proposed by \cite{hellwig1996sequential}, Eder  characterizes the relatively compact sets in the adapted Wasserstein topology in \cite{eder2019compactness}. The modulus of continuity proposed by Eder plays a central role in our work.

As discussed earlier, adapted transport maps induce bicausal couplings and as for their non-adapted counterparts, this inclusion is dense under mild regularity assumptions on the kernels \cite{beiglbock2018denseness, beiglbock2022denseness}. The key idea is to embed bicausal couplings into enlarged spaces and approximate them by adapted maps. In Lemma \ref{lem:epcoupling} we adapt these results to approximate a bicausal coupling between $\mu$ and $\mu\ast \xi_{\sigma}$ by transport maps in order to bound the adapted Wasserstein distance between them. 

We also mention that duality of the adapted Wasserstein distance is investigated in \cite{backhoff2017causal, eckstein2024computational}. Furthermore, \cite{backhoff2017causal} introduce a dynamic programming principle for the adapted Wasserstein distance, which we will use throughout this paper.

Notably, Eckstein--Pammer show in \cite{eckstein2024computational}, that the total variation distance and the \textit{adapted total variation distance} $\av(\mu, \nu):=\inf_{\pi\in \cplbc(\mu, \nu)}\pi(\{x\neq y\})$ are equivalent as metrics. To the best of our knowledge, this is the first result that gives upper and lower bounds of an adapted optimal transport cost by an optimal transport cost. Our work uses this estimate to bound $\a\w_p$ in terms of $\w_p$.

The recent preprint \cite{acciaio2025estimating} investigates the upper bound \eqref{eq:gudi_goal} under the assumption that the densities of $\mu$ and $\nu$ are elements of a Sobolev space. Their work is motivated by a preliminary version of the present paper and was made publicly available after the submission of this work. One of their contributions is an improvement of the bound obtained by Eckstein--Pammer \cite{eckstein2024computational}. In the original work \cite{eckstein2024computational}, the bounding constants depend exponentially on the number of time steps $T$, whereas \cite{acciaio2025estimating} shows that the constants grow linearly in $T$. 

Regarding applications, \cite{pflug2012distance, pflug2014multistage, glanzer2019incorporating} show that many multistage optimization problems such as computing bid prices, are H\"older continuous with respect to $\a\w_p$. \cite{backhoff2020adapted, acciaio2020causal} extend this idea to a continuous time setting in mathematical finance and show that utility maximization, risk measurement and pricing/hedging problems are Lipschitz continuous with respect to $\a\w_1$.

Smoothing measures by convolving with Gaussian noise $\xi$ is a standard tool in statistics, and is often used to obtain dimension-free empirical approximations. In optimal transport, the smooth Wasserstein distance and its concentration has been studied extensively.
We refer e.g. to \cite{nietert2021smooth, goldfeld2020convergence, goldfeld2020gaussian} for both the slow rate $\E[\w^{(\sigma)}_p(\mu, \boldsymbol{\mu_n})^p]\le C/\sqrt{n}$ and the fast rate $\E[\w^{(\sigma)}_p(\mu, \boldsymbol{\mu_n})]\le C/\sqrt{n}$ under suitable moment conditions. Distributional limits of $\sqrt{n}\w^{(\sigma)}_p(\mu, \boldsymbol{\mu_n})$ can be found in \cite{goldfeld2020asymptotic, sadhu2021limit} for the case $p=1$ and in \cite{goldfeld2024limit, goldfeld2024statistical} for $p>1$.

It is well-known that the usual empirical measure $\boldsymbol{\mu_n}$ is not adequate for empirical approximations in the adapted Wasserstein topology, i.e., $\lim_{n\to \infty} \a\w_p(\mu,\boldsymbol{\mu_n})\neq 0$ in general. To alleviate this, \cite{backhoff2022estimating} considers so-called \textit{adapted empirical measures} for measures supported on compact sets, and prove the same dimension-dependent convergence rates as are known for their non-adapted counterparts. \cite{acciaio2022convergence} extends this result to general measures. 

Quite recently, there have been  first efforts to study properties of the smooth adapted Wasserstein distance $\a\w^{(\sigma)}_p$. Motivated by a martingale pair test, \cite{blanchet2024empirical} obtain the  dimension-free convergence rate $1/\sqrt{n}$ for the so-called smoothed martingale projection distance. Under suitable moment assumptions, \cite{hou2024convergence} establishes rates for $\E[\a\w^{(\sigma)}_1(\mu, \boldsymbol{\mu_n})]$.

\begin{subsection}{Organization of the paper}
The rest of the paper is organized as follows. We end this introduction by setting up notation in Section~\ref{sec:intro_nota}.  In Section~\ref{sec:background} we present essential background materials for this paper, including the characterization of $\a\w_p$-relatively compact sets by \cite{eder2019compactness} and basic properties of smooth Wasserstein distances. In Section~\ref{sec:main results}, we provide the exact statements of the results that were briefly mentioned in Section \ref{sec:intro_bound} and Section \ref{sec:intro_topology}. Proofs of results in Section~\ref{sec:background} are contained in Section~\ref{sec:pf_background}. Section~\ref{sec:pf_main results} is devoted to proofs of results in Section~\ref{sec:main results}. In Appendix~\ref{appendix:pfex}, the convergence results regarding Example \ref{ex:standardex} are explained in detail. Appendix~\ref{appendix:monge} contains a denseness result used in the proof of Lemma \ref{lem:epcoupling}.
\end{subsection}

\begin{subsection}{Notation}\label{sec:intro_nota}
As mentioned above, $\abs{\cdot}$ is the Euclidean norm, and we denote the scalar (dot) product by $\cdot$. Throughout the paper,  $T\in \N$ is the number of time steps and $d\in \N$ is the dimension of the state space. The set of all Borel probability measures on $(\R^d)^T$  is denoted by $\sp((\R^d)^T)$. For $1\le p<\infty$, $\sp_p((\R^d)^T)$ is the set of all $\mu\in \sp((\R^d)^T)$ that have finite $p$-moments. If $\mu\in \sp((\R^d)^T)$, then we write $M_p(\mu):=\int_{(\R^d)^T}\abs{x}^p \mu(dx)$ for the $p$-moment of $\mu$. We denote the pushforward measure of $\mu$ under a Borel function $g$ by $g_{\#}\mu$, i.e. $g_{\#}\mu(A)=\mu(\{x:\, g(x)\in A\})$ for all Borel sets $A$.

For $x\in (\R^d)^T$, we use the shorthand notation $x_t$ to denote the $t$-th coordinate of $x$ and $x_{1:t}:=(x_1, \ldots, x_t)$ for $t\in \{1,2,\ldots, T\}$. In particular, $x=x_{1:T}$. Analogously, for $\mu\in \sp((\R^d)^T)$ and $t\in \{1,2,\ldots, T\}$, the measure $\mu_{t}\in \sp(\R^d)$ is the projection of $\mu$ onto the $t$-th coordinate, i.e., if $P_t:(\R^d)^T\to \R^d$ is defined as $P_t(x_1, \ldots, x_T)=x_t$, then $\mu_t=(P_t)_{\#}\mu$. Also, $\mu_{1:t}\in \sp((\R^d)^t)$ is the pushforward measure of $\mu$ through the projection map $P_{1:t}$ onto the first $t$ coordinates, i.e., $P_{1:t}(x_1, \ldots, x_T)=(x_1, \ldots, x_t)$. When integrating with respect to $\mu_t(dx_t)$ or $\mu_{1:t}(dx_{1:t})$, we often drop subscripts and write
\begin{align}\label{eq:abbrev_int}
    \int_{\R^d}g(x_t) \mu_t(dx_t)
    =\int_{\R^d}g(x_t)\mu(dx_t),\quad
    \int_{(\R^d)^t}g(x_{1:t}) \mu_{1:t}(dx_{1:t})
    =\int_{(\R^d)^t}g(x_{1:t})\mu(dx_{1:t})
\end{align}
if there is no confusion. Given $x\in (\R^d)^T$ we define $\mu_{x_{1:t}}\in \sp(\R^d)$, $t\in \{1,2,\ldots, T\}$ via
\begin{align}
    \mu_{x_{1:t}}(dx_{t+1})
    =\P(X_{t+1}\in dx_{t+1} \st X_{1:t}=x_{1:t})
\end{align}
for $X\sim \mu\in \sp((\R^d)^T)$. 
In other words, $(\mu_{x_{1:t}})_{x_{1:t}\in (\R^d)^t}$ are the disintegrations of $\mu_{1:t+1}$ along the first $t$ coordinates.

Recall that $\cpl(\mu, \nu)$ is the set of all couplings, $\cplbc(\mu, \nu)$ is the set of all bicausal couplings, the Wasserstein distance $\w_p$ is defined in \eqref{eq:wass} and $\a\w_p$ in Definition~\ref{df:aw}. We adopt similar shorthand notation as above for $\pi\in \sp((\R^d)^T\times (\R^d)^T)$: $\pi_{t,t}\in \sp(\R^d\times \R^d)$ (respectively, $\pi_{1:t, 1:t}\in \sp((\R^d)^t\times (\R^d)^t)$) is the pushforward measure of $\pi$ through the map $(x_1, \ldots, x_T, y_1, \ldots, y_T)\mapsto (x_t, y_t)$ (respectively, $(x_1, \ldots, x_T, y_1, \ldots, y_T)\mapsto (x_1, \ldots, x_t, y_1, \ldots, y_t)$). We use the abbreviations $\pi(dx_t, dy_t)=\pi_{t,t}(dx_t, dy_t)$ and $\pi(dx_{1:t}, dy_{1:t})=\pi_{1:t, 1:t}(dx_{1:t}, dy_{1:t})$ as in \eqref{eq:abbrev_int} if there is no confusion. Let us also remark that by definition we have
\begin{align}
    \pi_{x_{1:t}, y_{1:t}}(dx_{t+1}, dy_{t+1})
    =\P(X_{t+1}\in dx_{t+1}, Y_{t+1}\in dy_{t+1} \st X_{1:t}=x_{1:t}, Y_{1:t}=y_{1:t})
\end{align}
for $(X, Y)\sim \pi$. 

Next we introduce the convolution operation $\ast$, i.e., given two probability measures $\mu, \nu$, we define $\mu\ast \nu(A):=\int \mu(A-x)\nu(dx)$ for all measurable $A$. In particular, if $\nu$ has density $g$, then $\mu\ast \nu$ has density $\mu\ast g(x):=\int g(x-y)\mu(dy)$. The (distributional) Fourier transform of $\mu\in \sp((\R^d)^T)$, which we denote by $\widehat{\mu}: (\R^d)^T\to \C$, is given by $\widehat{\mu}(t)=\int_{(\R^d)^T} e^{-2\pi i t\cdot x}\mu(dx)$. For a function $g$, we similarly have $\widehat{g}(t):=\int_{(\R^d)^T} e^{-2\pi i t\cdot x}g(x)dx$. Recall that $\widehat{\mu\ast \nu}=\widehat{\mu}\widehat{\nu}$.

Unless otherwise stated, we reserve the notation $\xi$ for the smoothing noise distribution. For $\sigma>0$, we write $\xi_{\sigma}$ for the law of $\sigma Z$, where $Z\sim \xi$. We use the shorthand notation $\mu^{\sigma}:=\mu\ast \xi_{\sigma}$. See Section~\ref{sec:smoothaw} for details.

For any signed measure $\rho$ on $(\R^d)^T$ we define $\norm{\rho}_{\tv}:=\sup \sum_{j=1}^{\infty} \abs{\rho(E_j)}$, where the supremum is taken over all measurable partitions $(E_j)_{j\in \N}$ of $(\R^d)^T$. Recall that if $\mu, \nu\in \sp((\R^d)^T)$,
\begin{align}
    \norm{\mu-\nu}_{\tv}
    =2\sup_{G\subseteq (\R^d)^T}\left( \mu(G)-\nu(G)\right)
    =2\sup_{\pi \in \cpl(\mu, \nu)}\pi(\{x\neq y\}).
\end{align}

Given $\mu\in \R^N$ and $\Sigma\in \R^{N\times N}$, we write $\n(\mu, \Sigma)$ for the Gaussian distribution with mean vector $\mu$ and covariance matrix $\Sigma$. The $N\times N$ identity matrix is denoted as $\Id_{N}$. In particular, $\n(0, \Id_{N})$ is the standard Gaussian measure on $\R^N$. 

We write $C$ for a constant, that might change from line to line. We will detail the dependence of $C$ on non-trivial parameters throughout the paper.
\end{subsection}

\end{section}

\section{Background}\label{sec:background}
In this section, we review some auxiliary results on characterizations of relative compactness in Wasserstein distance and smooth Wasserstein distances. Proofs of all results in this section can be found in Section \ref{sec:pf_background}.

\subsection{Compactness in the adapted Wasserstein topology}\label{sec:awtopology}
Recall that $\w_p$ was introduced in \eqref{eq:wass} and $\a\w_p$ was introduced in Definition \ref{df:aw}. Given $\abs{x}\le C (\sum_{t=1}^T \abs{x_t}^p)^{1/p}$, where $C=1$ if $1\le p\le 2$ and $C=T^{1/2-1/p}$ if $p\ge 2$, we have $\w_p\le C\a\w_p$. Hence, the $\a\w_p$-relatively compact subsets of $\sp_p((\R^d)^T)$ are also $\w_p$-relatively compact. In fact, Eder showed in \cite{eder2019compactness} that the $\a\w_p$-relatively compact sets are exactly those $\w_p$-relatively compact sets whose modulus of continuity converges uniformly.

    \begin{df}[Modulus of continuity, \cite{eder2019compactness}]\label{df:modconti}
        Let $\mu\in \sp_p((\R^d)^T)$. The \emph{$p$-modulus of continuity} of $\mu$, denoted by $\omega_{\mu}^{t,p}:(0,\infty)\to [0, \infty)$, $t\in \{1,2,\ldots, T-1\}$, is given by
        \begin{align}
            \omega_{\mu}^{t,p}(\delta)
            =\sup
            \left\{\left(\E[\w_p(\mu_{X_{1:t}}, \mu_{Y_{1:t}})^p]\right)^{1/p} \,\big\vert\, X, Y\sim \mu, \left(\E[\abs{X_{1:t}-Y_{1:t}}^p]\right)^{1/p}<\delta\right\}.
        \end{align}
    \end{df}

\begingroup
\def\thethm{\ref{prop:awrelcpt}}
    \begin{prop}[Eder \cite{eder2019compactness}]
        Let $K\subseteq \sp_p((\R^d)^T)$. Then $K$ is $\a\w_p$-relatively compact if and only if $K$ is $\w_p$-relatively compact and
        \begin{align}
            \lim_{\delta\to 0}\sup_{\mu\in K}\omega^{t,p}_{\mu}(\delta)=0
            \quad \text{ for all } t\in \{1,2,\ldots, T-1\}.\label{eq:modofconti}
        \end{align}
    \end{prop}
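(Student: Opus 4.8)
The plan is to establish the two implications separately, using throughout that $(\sp_p((\R^d)^T),\w_p)$ and $(\sp_p((\R^d)^T),\a\w_p)$ are complete metric spaces, so that relative compactness of $K$ is equivalent to every sequence in $K$ admitting a $\w_p$-convergent (respectively $\a\w_p$-Cauchy) subsequence. The necessity direction is essentially soft: it rests on the comparison $\w_p\lesssim\a\w_p$ together with a quasi-Lipschitz property of the modulus $\omega^{t,p}_\mu$. The sufficiency direction is where the quantitative estimates of this paper (Theorems~\ref{thm:awsigmaw1} and~\ref{thm:muandmusigma}) do the work, yielding a new proof of Eder's theorem.

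\emph{Necessity.} Assume $K$ is $\a\w_p$-relatively compact. Since $\w_p\le C\a\w_p$ with $C=C(p,T)$, $K$ is $\w_p$-relatively compact. For \eqref{eq:modofconti} I would use two facts. First, for every fixed $\mu\in\sp_p((\R^d)^T)$ one has $\lim_{\delta\to0}\omega^{t,p}_\mu(\delta)=0$: the map $x_{1:t}\mapsto\mu_{x_{1:t}}$ is Borel, so by Lusin's theorem it is uniformly continuous off a set of arbitrarily small $\mu_{1:t}$-measure, and the contribution of that exceptional set (and of $\{|X_{1:t}-Y_{1:t}|$ large$\}$, controlled by Markov's inequality) is handled via $\w_p(\rho,\rho')^p\le 2^{p-1}(M_p(\rho)+M_p(\rho'))$ and the integrability of $x_{1:t}\mapsto M_p(\mu_{x_{1:t}})$. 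Second, I would prove the quasi-Lipschitz bound
\begin{align}
\omega^{t,p}_\mu(\delta)\ \le\ \omega^{t,p}_\nu\big(\delta+2\,\a\w_p(\mu,\nu)\big)+2\,\a\w_p(\mu,\nu)\qquad(\delta>0)
\end{align}
by gluing (via the gluing lemma) a near-optimal bicausal coupling of $(\nu,\mu)$, a near-optimal self-coupling of $\mu$ realizing $\omega^{t,p}_\mu(\delta)$, and the bicausal coupling again, to produce a self-coupling of $\nu$; the $L^p$-distance between the first $t$ coordinates of its marginals is at most $\delta+2\,\a\w_p(\mu,\nu)$, and the loss in the kernel term is at most $2\,\a\w_p(\mu,\nu)$ since, by the dynamic programming principle of \cite{backhoff2017causal} applied at time $t$, $\E[\w_p(\nu_{X_{1:t}},\mu_{A_{1:t}})^p]\le\a\w_p(\nu,\mu)^p$ along the bicausal coupling (using $\w_p\le\a\w_p$ on the remaining coordinates). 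Combining the two: if \eqref{eq:modofconti} failed there would be $\eta>0$, $\delta_n\downarrow0$ and $\mu_n\in K$ with $\omega^{t,p}_{\mu_n}(\delta_n)>\eta$; extracting an $\a\w_p$-limit $\mu_{n_j}\to\mu^\ast$ and using the quasi-Lipschitz bound forces $\omega^{t,p}_{\mu^\ast}(\delta')\ge\eta$ for every $\delta'>0$, contradicting the first fact.

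\emph{Sufficiency.} Assume $K$ is $\w_p$-relatively compact and \eqref{eq:modofconti} holds; fix $(\mu_n)\subseteq K$ and pass to a subsequence (still denoted $(\mu_n)$) that is $\w_p$-Cauchy. For $\sigma,R>0$ the triangle inequality gives
\begin{align}
\a\w_p(\mu_n,\mu_m)\ \le\ \a\w_p(\mu_n\ast\xi_\sigma,\mu_m\ast\xi_\sigma)+\a\w_p(\mu_n,\mu_n\ast\xi_\sigma)+\a\w_p(\mu_m,\mu_m\ast\xi_\sigma).
\end{align}
By Theorem~\ref{thm:awsigmaw1} the first term is $\lesssim\frac{R}{\sigma}\,\w_p(\mu_n,\mu_m)+\sup_{\rho\in K}\big(\int_{\{|x|\ge R\}}|x|^p\,(\rho\ast\xi_\sigma)(dx)\big)^{1/p}$; since convolution with $\xi_\sigma$ is a $\w_p$-contraction, $\{\rho\ast\xi_\sigma:\rho\in K\}$ is again $\w_p$-relatively compact, hence uniformly $p$-integrable, so this supremum tends to $0$ as $R\to\infty$. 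By Theorem~\ref{thm:muandmusigma} each of the last two terms is $\lesssim$ an iterated modulus of continuity evaluated at $\sigma$, which by \eqref{eq:modofconti} (and an induction over $t$) tends to $0$ uniformly over $K$ as $\sigma\to0$. Given $\epsilon>0$, first choose $\sigma$ so small that the last two terms are $<\epsilon$, then $R$ so large that the tail supremum is $<\epsilon$, and finally $N$ so that $\frac{R}{\sigma}\,\w_p(\mu_n,\mu_m)<\epsilon$ for $n,m\ge N$. Hence $(\mu_n)$ is $\a\w_p$-Cauchy, so convergent by completeness, and $K$ is $\a\w_p$-relatively compact.

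\emph{Main obstacle.} The conceptually hard inputs are Theorems~\ref{thm:awsigmaw1} and~\ref{thm:muandmusigma}, which are assumed here. Within the present argument the delicate points are the single-measure continuity $\omega^{t,p}_\mu(\delta)\to0$ (routine but needing Lusin's theorem plus absolute continuity of the integral for the conditional $p$-moments), the precise form of the dynamic programming principle used in the gluing step, and verifying that Theorems~\ref{thm:awsigmaw1}--\ref{thm:muandmusigma} are proved without appeal to Proposition~\ref{prop:awrelcpt} (they are, relying on the total-variation bound of \cite{eckstein2024computational} and the Monge approximation of Lemma~\ref{lem:epcoupling}), so that the sufficiency proof is not circular.
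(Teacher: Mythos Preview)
Your argument is correct but follows a genuinely different route from the paper. The paper does not prove Proposition~\ref{prop:awrelcpt} from scratch: it invokes Eder's original result \cite[Theorem~3.2]{eder2019compactness}, which characterizes $\a\w_p$-relative compactness via the \emph{extended} modulus $\bar\omega^{t,p}_\mu$ (built from the full conditional law $\bar\mu_{x_{1:t}}$ of $X_{t+1:T}$ given $X_{1:t}$), and then shows via Lemma~\ref{lem:extmc_mc} that the $\bar\omega$-condition and the $\omega$-condition are equivalent. So the paper's proof is a short reduction to an external theorem, with Lemma~\ref{lem:extmc_mc} doing the only real work.

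Your proof, by contrast, is self-contained relative to Eder: for sufficiency you leverage Theorems~\ref{thm:awsigmaw1} and~\ref{thm:muandmusigma} to turn a $\w_p$-Cauchy sequence into an $\a\w_p$-Cauchy one, and for necessity you combine a quasi-Lipschitz bound for $\omega^{t,p}_\mu$ under $\a\w_p$-perturbations with the single-measure continuity $\omega^{t,p}_\mu(\delta)\to 0$. This is a legitimate alternative and in fact yields an independent proof of Eder's theorem as a corollary of the paper's smoothing estimates---a nice illustration of their strength. Your non-circularity check is correct: neither Theorem~\ref{thm:awsigmaw1} nor Theorem~\ref{thm:muandmusigma} (nor their ingredients, Lemmas~\ref{lem:cptapprox}, \ref{lem:tvav}, \ref{lem:tvsigmaw1}, \ref{lem:epcoupling} and Proposition~\ref{prop:propertyofomega}) invokes Proposition~\ref{prop:awrelcpt}. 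The trade-off is that the paper's route is lighter (one elementary lemma plus a citation), whereas yours imports the full smoothing machinery and also requires completeness of $(\sp_p,\a\w_p)$ and the direct Lusin-type argument for $\omega^{t,p}_\mu(\delta)\to 0$, which the paper obtains only \emph{a posteriori} as a remark from Proposition~\ref{prop:awrelcpt} itself. Two small points worth tightening: in your compactness contradiction you should make explicit that the limit $\mu^\ast$ lies in $\sp_p$ (not necessarily in $K$), so the single-measure continuity applies to it; and in the sufficiency step you should fix a specific $\xi$ satisfying Assumption~\ref{assume:noise} (e.g.\ standard Gaussian) before invoking the theorems.
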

   \addtocounter{thm}{-1}
\endgroup
    The characterization of compactness in Proposition \ref{prop:awrelcpt} is slightly different from the original result by Eder \cite{eder2019compactness}. However, it is not hard to see that they are equivalent, as we prove in Section \ref{sec:pf_background}.
    \begin{rmk}
        Trivially $K=\{\mu\}$ is $\a\w_p$-relatively compact. Hence, $\omega^{t,p}_{\mu}(\delta)\to 0$ as $\delta\to 0$.
    \end{rmk}
    \begin{rmk}
        Recall $\mu_{\ep}=\frac{1}{2}\delta_{(-\ep, -1)}+\frac{1}{2}\delta_{(\ep, 1)}$ from Example~\ref{ex:standardex}. Note that $K:=\{\mu_{\ep}: 0<\ep<1\}$ is $\w_p$-relatively compact. However, $(\mu_{\ep})_x=\delta_1$ if $x=\ep$ and $(\mu_{\ep})_x=\delta_{-1}$ if $x=-\ep$. Thus we easily compute that $\omega_{\mu_{\ep}}^{1,p}(\delta)=(\delta/\ep)\wedge 2$ and $\sup_{0<\ep<1}\omega_{\mu_{\ep}}^{1,p}(\delta)=2$. Proposition \ref{prop:awrelcpt} implies that $K$ is not $\a\w_p$-relatively compact.
    \end{rmk} 
Next we present some basic properties of the modulus of continuity $\omega$.
    \begin{prop}\label{prop:propertyofomega}
        Let $\mu\in \sp_p((\R^d)^T)$ and $t\in \{1,2,\ldots, T-1\}$.
        \begin{anumerate}
            \item\label{prop:propertyofomega_inc} The function $\omega^{t,p}_{\mu}$ is monotone increasing, i.e. $\omega^{t,p}_{\mu}(\delta_1)\le \omega^{t,p}_{\mu}(\delta_2)$ for $\delta_1<\delta_2$.
            \item\label{prop:propertyofomega_const} For $k, \delta>0$, $\omega^{t,p}_{\mu}(k\delta)\le (k\vee 1)\omega^{t,p}_{\mu}(\delta)$.
        \end{anumerate}
    \end{prop}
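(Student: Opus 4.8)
The plan is to prove the two stated properties of $\omega^{t,p}_\mu$ directly from Definition~\ref{df:modconti}, since both are essentially structural facts about the supremum defining $\omega$.

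\textbf{Part (a): Monotonicity.} This is immediate from the definition. For $\delta_1 < \delta_2$, any pair $X, Y \sim \mu$ with $(\E[\abs{X_{1:t}-Y_{1:t}}^p])^{1/p} < \delta_1$ also satisfies $(\E[\abs{X_{1:t}-Y_{1:t}}^p])^{1/p} < \delta_2$. Hence the feasible set over which we take the supremum for $\omega^{t,p}_\mu(\delta_1)$ is contained in the feasible set for $\omega^{t,p}_\mu(\delta_2)$, and the supremum over a larger set is at least as large. I would write this in one or two sentences.

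\textbf{Part (b): The quasi-linear growth bound.} The case $k \le 1$ follows from part (a) since $\omega^{t,p}_\mu(k\delta) \le \omega^{t,p}_\mu(\delta) = (k \vee 1)\,\omega^{t,p}_\mu(\delta)$. So assume $k > 1$, where we must show $\omega^{t,p}_\mu(k\delta) \le k\,\omega^{t,p}_\mu(\delta)$. The natural approach is a scaling/subdivision argument: given $X, Y \sim \mu$ with $(\E[\abs{X_{1:t}-Y_{1:t}}^p])^{1/p} < k\delta$, I want to interpolate between them through a chain of roughly $\lceil k \rceil$ intermediate measures, each consecutive pair being $\delta$-close in $L^p$, and then use the triangle inequality for the map $\nu \mapsto (\E[\w_p(\mu_{X_{1:t}}, \mu_{Y_{1:t}})^p])^{1/p}$. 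The cleaner route, and the one I would pursue first, is to observe that the quantity $(\E[\w_p(\mu_{X_{1:t}}, \mu_{Y_{1:t}})^p])^{1/p}$ is itself a genuine metric (an $L^p$-type distance between the $L^p$-valued random elements $X \mapsto \mu_{X_{1:t}}$), so one can use that the modulus of continuity of any function between metric spaces satisfies $\omega(k\delta) \le \lceil k \rceil\, \omega(\delta)$ via a standard chaining argument — but getting the constant down to exactly $k \vee 1$ rather than $\lceil k\rceil$ requires a little more care.

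\textbf{The anticipated obstacle.} The subtlety in part (b) is that to chain from $X$ to $Y$ through intermediate points one needs, for a given pair $(X,Y)$ with $\|X_{1:t}-Y_{1:t}\|_{L^p}<k\delta$, to construct intermediate random variables $Z^{(0)}=X, Z^{(1)}, \dots, Z^{(m)}=Y$ all with law $\mu$ and with $\|Z^{(j-1)}_{1:t}-Z^{(j)}_{1:t}\|_{L^p}<\delta$; but one must also control the disintegration distances $\w_p(\mu_{Z^{(j-1)}_{1:t}}, \mu_{Z^{(j)}_{1:t}})$, which is exactly what $\omega$ bounds — so the telescoping works, but one must check the intermediate couplings can be realized on a common probability space. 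I expect the slick proof to instead avoid chaining entirely: scale the displacement directly. Concretely, for $(X,Y)\sim\pi$ achieving near-optimality, consider on the same space the pair $(X, Y)$ and note that $\w_p(\mu_{X_{1:t}},\mu_{Y_{1:t}}) = \w_p(\mu_{X_{1:t}},\mu_{Y_{1:t}})$ trivially, so what one really wants is: the function $\delta \mapsto \omega^{t,p}_\mu(\delta)$ is subadditive-like. I would look for the argument that for $k>1$ one takes a near-optimal pair for $\omega^{t,p}_\mu(k\delta)$ and observes $k\delta > \delta$, then uses that the supremum defining $\omega^{t,p}_\mu(k\delta)$ can be stratified — but honestly the expected intended proof is the triangle-inequality chaining with $m=\lceil k\rceil$ pieces combined with a refinement (splitting into pieces of length exactly $\delta$, of which there are at most $k$ when $k$ is such that one can use non-integer interpolation on a path), giving the bound $(k\vee 1)$. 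The main work is verifying that such an interpolating chain of copies of $\mu$ with controlled consecutive $L^p$-distances exists, which I would handle by working on an enlarged product space and using that $\mu_{X_{1:t}}$ depends measurably on $X_{1:t}$.
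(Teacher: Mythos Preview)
Your treatment of part~(a) and the case $k\le 1$ in part~(b) matches the paper exactly. For $k>1$, however, your chaining strategy is genuinely different from the paper's argument, and it has a real gap.

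The paper does not chain at all. Instead it invokes a reformulation from \cite[Lemma~2.4]{eder2019compactness} that relaxes the marginal constraint: one may take the supremum defining $\omega^{t,p}_\mu(\delta)$ over \emph{sub-probability} measures $\gamma$ on $(\R^d)^t\times\sp(\R^d)\times(\R^d)^t\times\sp(\R^d)$ whose two $(\R^d)^t\times\sp(\R^d)$-marginals are merely \emph{dominated} by $\L_t(\mu):=\law(X_{1:t},\mu_{X_{1:t}})$. Once this relaxation is in place, the proof is a one-line scaling: if $\gamma$ is feasible for level $k\delta$, then $k^{-p}\gamma$ is feasible for level $\delta$ (the marginals only get smaller, and the $L^p$-cost scales by $k^{-1}$), while the objective satisfies $\rho^{\y}(\gamma)=k\,\rho^{\y}(k^{-p}\gamma)$. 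This yields the sharp constant $k$ immediately.

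Your chaining proposal, by contrast, faces two obstacles you yourself flag but do not resolve. First, a chain of $\lceil k\rceil$ steps gives at best the constant $\lceil k\rceil$, not $k$; there is no evident ``non-integer interpolation'' that repairs this within the chaining framework. Second, and more seriously, constructing intermediate $Z^{(j)}\sim\mu$ with $\|Z^{(j-1)}_{1:t}-Z^{(j)}_{1:t}\|_{L^p}<\delta$ is not addressed: the $L^p$-geodesic $\theta X+(1-\theta)Y$ does not have law $\mu$ in general, and mixture-type constructions (e.g.\ $Z_\theta=X\ind{U\le\theta}+Y\ind{U>\theta}$) do not give small consecutive $L^p$-distances along the chain. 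The sub-probability relaxation is precisely what sidesteps this: by allowing the marginals to be $\le\L_t(\mu)$ rather than $=\L_t(\mu)$, one can simply shrink the coupling mass rather than construct new couplings.
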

    If the kernels $(\R^d)^t\ni x_{1:t}\mapsto \mu_{x_{1:t}}\in \sp_p(\R^d)$ are equicontinuous,  where $\sp_p(\R^d)$ is endowed with a metric $\w_p$, then the modulus of continuity of $\mu$ converges uniformly. An important class of measures with equicontinuous kernels is given by the ones with  H\"older continuous kernels.
    \begin{df}
        Let $L>0$ and $0<\alpha\le 1$. The set $\sp_{p, \alpha, L}((\R^d)^T)$ contains all $\mu\in \sp_p((\R^d)^T)$ that satisfy
        \begin{align}
            \w_p(\mu_{x_{1:t}}, \mu_{y_{1:t}})\le L \abs{x_{1:t}-y_{1:t}}^{\alpha}
        \end{align}
        for all $x_{1:t}, y_{1:t}\in (\R^d)^t$ and $t\in \{1, 2, \ldots, T-1\}$.
    \end{df}

    \begin{prop}\label{prop:omega_holder}
        Let $L>0$ and $0<\alpha\le 1$. If $\mu\in \sp_{p, \alpha, L}((\R^d)^T)$, then $\omega^{t,p}_{\mu}(\delta)\le L \delta^{\alpha}$ for $\delta>0$ and $t\in \{1,2,\ldots, T-1\}$.
    \end{prop}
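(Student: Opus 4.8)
The plan is to unwind the definitions and reduce the statement to a pointwise estimate on the Wasserstein distance between conditional kernels. Fix $\mu \in \sp_{p,\alpha,L}((\R^d)^T)$, fix $t \in \{1,\dots,T-1\}$, fix $\delta > 0$, and let $X, Y \sim \mu$ be any pair of $\mu$-distributed random vectors with $(\E[\abs{X_{1:t} - Y_{1:t}}^p])^{1/p} < \delta$. By the defining property of $\sp_{p,\alpha,L}((\R^d)^T)$, applied pointwise at the realized values $x_{1:t}, y_{1:t}$, we have $\w_p(\mu_{x_{1:t}}, \mu_{y_{1:t}}) \le L\abs{x_{1:t} - y_{1:t}}^\alpha$. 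Raising to the $p$-th power and taking expectations over $(X_{1:t}, Y_{1:t})$ gives
\begin{align}
\E[\w_p(\mu_{X_{1:t}}, \mu_{Y_{1:t}})^p] \le L^p \,\E[\abs{X_{1:t} - Y_{1:t}}^{\alpha p}].
\end{align}

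Next I would control the right-hand side. Since $0 < \alpha \le 1$, the map $s \mapsto s^\alpha$ is concave on $[0,\infty)$, so by Jensen's inequality applied to the random variable $\abs{X_{1:t} - Y_{1:t}}^p$,
\begin{align}
\E[\abs{X_{1:t} - Y_{1:t}}^{\alpha p}] = \E[(\abs{X_{1:t}-Y_{1:t}}^p)^\alpha] \le \left(\E[\abs{X_{1:t}-Y_{1:t}}^p]\right)^\alpha < \delta^{\alpha p}.
\end{align}
Combining the two displays yields $\E[\w_p(\mu_{X_{1:t}}, \mu_{Y_{1:t}})^p] \le L^p \delta^{\alpha p}$, hence $(\E[\w_p(\mu_{X_{1:t}}, \mu_{Y_{1:t}})^p])^{1/p} \le L\delta^\alpha$. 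Since $X, Y$ were an arbitrary admissible pair, taking the supremum over all such pairs in the definition of $\omega^{t,p}_\mu(\delta)$ gives $\omega^{t,p}_\mu(\delta) \le L\delta^\alpha$, as claimed.

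There is really no serious obstacle here; the only mild subtlety worth stating carefully is the measurability/integrability bookkeeping — namely that $x_{1:t} \mapsto \w_p(\mu_{x_{1:t}}, \mu_{y_{1:t}})$ is jointly measurable (so that the expectation makes sense), which follows from measurable dependence of the disintegration kernels and lower semicontinuity of $\w_p$, and that all the quantities are finite because $\mu$ has finite $p$-moments. One should also note that the bound $\w_p(\mu_{x_{1:t}}, \mu_{y_{1:t}}) \le L\abs{x_{1:t}-y_{1:t}}^\alpha$ holds for $\mu_{1:t}$-a.e.\ pair, which is all that is needed after taking expectations. Everything else is a direct substitution, so I would keep the write-up to essentially the three displays above.
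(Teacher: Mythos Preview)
Your proof is correct and follows exactly the same approach as the paper: apply the pointwise H\"older bound on the kernels, raise to the $p$-th power, take expectations, and then use Jensen's inequality for the concave map $s\mapsto s^{\alpha}$ to pass from the $\alpha p$-moment to the $\alpha$-th power of the $p$-moment. The paper compresses this into a single chain of inequalities, while you spell out the steps and add a remark on measurability, but there is no substantive difference.
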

   
What is more, measures with Lipschitz kernels (i.e. $\alpha=1$) arise in many practical examples. We refer to \cite[Example 1.9.]{backhoff2022estimating} for some simple sufficient conditions.

\subsection{Smooth adapted Wasserstein distances}\label{sec:smoothaw}
As explained in Section \ref{sec:intro}, we define the smooth (adapted) Wasserstein distance as follows.
    \begin{df}[Smooth distances]
        Let $\xi \in \sp_p((\R^d)^T)$. For $\sigma>0$ and $Z\sim \xi$, denote by $\xi_{\sigma}$ the law of $\sigma Z$. We define the \emph{smooth $p$-Wasserstein distance} with the smoothing parameter $\sigma$ via
        \begin{align}
            \w^{(\sigma)}_p(\mu, \nu)
            :=\w_p(\mu\ast\xi_{\sigma}, \nu\ast \xi_{\sigma}) \text{ for } \mu, \nu\in \sp_p((\R^d)^T).
        \end{align}
        Similarly, the \emph{smooth $p$-adapted Wasserstein distance} with the smoothing parameter $\sigma$ is given by
        \begin{align}
            \a\w^{(\sigma)}_p(\mu, \nu)
            :=\a\w_p(\mu\ast \xi_{\sigma}, \nu\ast\xi_{\sigma}) \text{ for } \mu, \nu\in \sp_p((\R^d)^T).
        \end{align}
    \end{df}
    \begin{rmk}
        For $\mu\in \sp((\R^d)^T)$, we often use the shorthand notation $\mu^{\sigma}=\mu\ast \xi_{\sigma}$. In particular, $\w^{(\sigma)}_p(\mu, \nu)=\w_p(\mu^{\sigma}, \nu^{\sigma})$ and $\a\w^{(\sigma)}_p(\mu, \nu)=\a\w_p(\mu^{\sigma}, \nu^{\sigma})$.
    \end{rmk}
    In the following proposition, we present some properties of the smooth Wasserstein distance $\w^{(\sigma)}_p$. While these results are simple extensions of the classical theory, we provide proofs in Section \ref{sec:pf_background} to keep our work self-contained.
    
    \begin{prop}[Property of $\w^{(\sigma)}_p$]\label{prop:smoothW}
        Suppose that the set of real zeros of $\widehat{\xi}$ has Lebesgue measure zero. Then, for $\sigma>0$, $\w^{(\sigma)}_p$ is a metric on $\sp_p((\R^d)^T)$ that induces the same topology as $\w_p$. Furthermore, if $\sigma_1, \sigma_2>0$, then
        \begin{align}
            \big|\w^{(\sigma_1)}_p(\mu, \nu)-\w^{(\sigma_2)}_p(\mu, \nu)\big|
            \le 2\abs{\sigma_1-\sigma_2}(M_p(\xi))^{1/p}
        \end{align}
        for $\mu, \nu\in \sp_p((\R^d)^T)$. Here, recall that $M_p(\xi)=\int_{\R^{dT}} \abs{x}^p \xi(dx)$.
    \end{prop}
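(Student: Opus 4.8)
\textbf{Proof plan for Proposition~\ref{prop:smoothW}.}

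The plan is to verify the three assertions in turn: that $\w^{(\sigma)}_p$ is a metric, that it induces the $\w_p$-topology, and the quantitative Lipschitz estimate in $\sigma$. For the metric property, symmetry and the triangle inequality are immediate from the corresponding properties of $\w_p$ together with the fact that $\mu\mapsto \mu\ast\xi_\sigma$ is a well-defined map on $\sp_p((\R^d)^T)$ (finiteness of $p$-moments of $\mu\ast\xi_\sigma$ follows from $M_p(\mu\ast\xi_\sigma)\le C_p(M_p(\mu)+\sigma^p M_p(\xi))<\infty$ by convexity/triangle inequality for the $L^p$-norm of $X+\sigma Z$). The only nontrivial point is positivity: if $\w^{(\sigma)}_p(\mu,\nu)=0$ then $\mu\ast\xi_\sigma=\nu\ast\xi_\sigma$, and I must deduce $\mu=\nu$. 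Passing to Fourier transforms, $\widehat{\mu}\,\widehat{\xi_\sigma}=\widehat{\nu}\,\widehat{\xi_\sigma}$ pointwise; since $\widehat{\xi_\sigma}(t)=\widehat{\xi}(\sigma t)$ and the real zero set of $\widehat{\xi}$ is Lebesgue-null, the zero set of $\widehat{\xi_\sigma}$ is also Lebesgue-null, so $\widehat{\mu}=\widehat{\nu}$ Lebesgue-a.e.; as both are continuous (bounded) functions, they agree everywhere, hence $\mu=\nu$ by uniqueness of the Fourier transform.

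For the topological equivalence, I would argue by sequences (both topologies being metrizable). One direction is easy: if $\w_p(\mu_n,\mu)\to 0$, then using any coupling $\pi_n$ of $(\mu_n,\mu)$ and coupling the noise to itself (the identity coupling), one gets a coupling of $(\mu_n\ast\xi_\sigma,\mu\ast\xi_\sigma)$ with the same transport cost, so $\w^{(\sigma)}_p(\mu_n,\mu)\le \w_p(\mu_n,\mu)\to 0$. The converse — $\w^{(\sigma)}_p(\mu_n,\mu)\to 0$ implies $\w_p(\mu_n,\mu)\to 0$ — is the substantive part. Since $\w^{(\sigma)}_p(\mu_n,\mu)\to 0$ gives $\mu_n\ast\xi_\sigma\to\mu\ast\xi_\sigma$ weakly plus $p$-moment convergence, one first upgrades this to $\mu_n\to\mu$ weakly: test against bounded continuous functions using that $\widehat{\mu_n}(t)=\widehat{\mu_n\ast\xi_\sigma}(t)/\widehat{\xi_\sigma}(t)\to \widehat{\mu\ast\xi_\sigma}(t)/\widehat{\xi_\sigma}(t)=\widehat{\mu}(t)$ for every $t$ outside the null set of zeros of $\widehat{\xi_\sigma}$, hence (by the Lévy continuity theorem, after checking tightness, which follows from the $p$-moment bound on $\mu_n\ast\xi_\sigma$ combined with a moment lower bound $M_p(\mu_n)\lesssim M_p(\mu_n\ast\xi_\sigma)+\sigma^p M_p(\xi)$ via Jensen/conditioning on the independent noise) one gets $\mu_n\to\mu$ weakly. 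Then to get full $\w_p$-convergence one also needs $M_p(\mu_n)\to M_p(\mu)$; this follows from $M_p(\mu_n\ast\xi_\sigma)\to M_p(\mu\ast\xi_\sigma)$ by expanding the $p$-moment of an independent sum, or more robustly by noting $\E|X_n+\sigma Z|^p\to\E|X+\sigma Z|^p$ with $Z$ fixed, decoupling to recover $\E|X_n|^p\to\E|X|^p$; combined with weak convergence this yields $\w_p(\mu_n,\mu)\to 0$.

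For the Lipschitz-in-$\sigma$ estimate, I would use the triangle inequality for $\w_p$ and a coupling argument: writing $\w^{(\sigma_1)}_p(\mu,\nu)=\w_p(\mu\ast\xi_{\sigma_1},\nu\ast\xi_{\sigma_1})$, insert $\nu\ast\xi_{\sigma_2}$ and $\mu\ast\xi_{\sigma_2}$ and bound the difference by $\w_p(\mu\ast\xi_{\sigma_1},\mu\ast\xi_{\sigma_2})+\w_p(\nu\ast\xi_{\sigma_1},\nu\ast\xi_{\sigma_2})$. Each of these is estimated by coupling $X$ to itself and $\sigma_1 Z$ to $\sigma_2 Z$ with the same $Z\sim\xi$, giving $\w_p(\mu\ast\xi_{\sigma_1},\mu\ast\xi_{\sigma_2})\le \|(\sigma_1-\sigma_2)Z\|_{L^p}=|\sigma_1-\sigma_2|\,M_p(\xi)^{1/p}$, and summing the two copies produces the factor $2$. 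I expect the main obstacle to be the converse direction of the topological equivalence — specifically, the tightness/moment-lower-bound step needed to invoke Lévy continuity and to propagate $p$-moment convergence back from the smoothed measures to the originals; the Fourier-division step is clean pointwise but one must be careful that it only holds off a null set, so weak convergence (not merely a.e.\ convergence of characteristic functions) requires the tightness input to close the argument.
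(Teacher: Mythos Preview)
Your plan is correct and covers all three assertions. The metric-positivity argument via Fourier transforms is essentially identical to the paper's.

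For the Lipschitz-in-$\sigma$ estimate you take a slightly different route: you fix each measure and change the noise level, bounding $\w_p(\mu\ast\xi_{\sigma_1},\mu\ast\xi_{\sigma_2})$ and $\w_p(\nu\ast\xi_{\sigma_1},\nu\ast\xi_{\sigma_2})$ separately via the synchronous coupling $(X+\sigma_1 Z,X+\sigma_2 Z)$. The paper instead starts from an \emph{optimal} coupling $(X+\sigma_1 Z,\,Y+\sigma_1 W)$ of $(\mu^{\sigma_1},\nu^{\sigma_1})$ and rescales the same noise pair to $\sigma_2$, then applies the $L^p$-triangle inequality with the term $|\sigma_1-\sigma_2|\,\|Z-W\|_{L^p}\le 2|\sigma_1-\sigma_2|M_p(\xi)^{1/p}$. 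Both decompositions yield the same constant; yours is a bit more modular and avoids invoking existence of an optimal coupling.

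For the topological equivalence the paper simply cites \cite[Lemma~15]{goldfeld2024statistical}, whereas you sketch a direct L\'evy--Prokhorov argument. Your outline is sound, but the step you flag as the obstacle --- recovering $M_p(\mu_n)\to M_p(\mu)$ from $M_p(\mu_n\ast\xi_\sigma)\to M_p(\mu\ast\xi_\sigma)$ --- does need a genuine uniform-integrability transfer rather than just ``decoupling.'' Concretely, one shows $\int_{\{|x|>2R\}}|x|^p\,\mu_n(dx)\lesssim \int_{\{|y|>R\}}|y|^p\,\mu_n^\sigma(dy)+M_p(\mu_n)\,\P(\sigma|Z|>R)+\sigma^p M_p(\xi)\,\mathbf{1}_{\{\cdots\}}$ by conditioning on $Z$ and splitting on $\{|\sigma Z|\le R\}$; combined with $\sup_n M_p(\mu_n)<\infty$ (which you already noted) and uniform integrability of $|y|^p$ under $\mu_n^\sigma$, this closes the argument. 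Once you make that step explicit, the converse direction goes through.
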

    The assumption that $\widehat{\xi}$ does not admit any real zeros a.s. is essential to guarantee that $\w^{(\sigma)}_p$ defines a metric. This is satisfied, for example, when $\xi$ is compactly supported (see \cite{goldfeld2024statistical}) or when $\xi$ is Gaussian. The same result is true for $\a\w^{(\sigma)}_p$.
    
    \begin{thm}[$\a\w^{(\sigma)}_p$ is a metric]\label{thm:smoothawmetric}
        Suppose that the set of real zeros of $\widehat{\xi}$ has Lebesgue measure zero. Then $\a\w_p^{(\sigma)}$ is a metric on $\sp_p((\R^d)^T)$ for $\sigma>0$.
    \end{thm}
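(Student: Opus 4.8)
The plan is to reduce the statement to two ingredients: the fact that $\a\w_p$ is itself a metric on $\sp_p((\R^d)^T)$ (standard, following from stability of bicausal couplings under gluing; see e.g. \cite{backhoff2017causal, backhoff2020adapted}), and an injectivity property of convolution by $\xi_\sigma$. First I would check that $\a\w^{(\sigma)}_p$ is well-defined and finite: if $X\sim\mu$ and $Z\sim\xi$ are independent, then $M_p(\mu\ast\xi_\sigma)=\E[\abs{X+\sigma Z}^p]\le 2^{p-1}(M_p(\mu)+\sigma^p M_p(\xi))<\infty$, so $\mu^\sigma,\nu^\sigma\in\sp_p((\R^d)^T)$ and $\a\w^{(\sigma)}_p(\mu,\nu)=\a\w_p(\mu^\sigma,\nu^\sigma)$ makes sense. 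Non-negativity and symmetry are inherited immediately from $\a\w_p$, and the triangle inequality follows verbatim: $\a\w^{(\sigma)}_p(\mu,\nu)=\a\w_p(\mu^\sigma,\nu^\sigma)\le\a\w_p(\mu^\sigma,\rho^\sigma)+\a\w_p(\rho^\sigma,\nu^\sigma)=\a\w^{(\sigma)}_p(\mu,\rho)+\a\w^{(\sigma)}_p(\rho,\nu)$.

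The only substantive point is the identity of indiscernibles. Since $\a\w_p$ is a metric, $\a\w^{(\sigma)}_p(\mu,\nu)=\a\w_p(\mu^\sigma,\nu^\sigma)=0$ holds if and only if $\mu^\sigma=\nu^\sigma$, so it suffices to prove that convolution with $\xi_\sigma$ is injective on $\sp((\R^d)^T)$, i.e. $\mu\ast\xi_\sigma=\nu\ast\xi_\sigma$ implies $\mu=\nu$. Passing to Fourier transforms and using $\widehat{\mu\ast\xi_\sigma}=\widehat{\mu}\,\widehat{\xi_\sigma}$ together with $\widehat{\xi_\sigma}(t)=\widehat{\xi}(\sigma t)$, the hypothesis gives $\widehat{\mu}(t)\widehat{\xi}(\sigma t)=\widehat{\nu}(t)\widehat{\xi}(\sigma t)$ for all $t$. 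The set $\{t:\widehat{\xi}(\sigma t)=0\}$ is a dilated copy of the real-zero set of $\widehat{\xi}$, hence Lebesgue-null by assumption; on its (dense) complement $\widehat{\mu}=\widehat{\nu}$, and since $\widehat{\mu},\widehat{\nu}$ are continuous this forces $\widehat{\mu}\equiv\widehat{\nu}$, whence $\mu=\nu$ by uniqueness of the Fourier transform of finite measures. This is exactly the argument already used for $\w^{(\sigma)}_p$ in Proposition~\ref{prop:smoothW}, so it can be quoted with only cosmetic changes.

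I do not expect a genuine obstacle here: the entire content is the injectivity of $\,\cdot\,\ast\xi_\sigma$, and the zero-set hypothesis on $\widehat{\xi}$ is precisely what guarantees it (injectivity can fail otherwise, e.g. when $\widehat{\xi}$ vanishes on an open set one can perturb $\mu$ in the corresponding frequencies without changing $\mu\ast\xi_\sigma$). The rest is a transparent transfer of the metric axioms from $\a\w_p$ to its smoothed version. If one wanted the write-up to be self-contained, the only external input that should be explicitly recalled or referenced is that $\a\w_p$ satisfies the triangle inequality on $\sp_p((\R^d)^T)$.
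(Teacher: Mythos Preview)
Your proposal is correct and matches the paper's own proof essentially line for line: symmetry and the triangle inequality are inherited from $\a\w_p$, and the identity of indiscernibles is obtained by passing to Fourier transforms and using the Lebesgue-null zero set of $\widehat{\xi}$ to conclude $\widehat{\mu}=\widehat{\nu}$ (the paper phrases the last step via tempered distributions, you via continuity of characteristic functions, but the content is identical).
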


\section{Main results}\label{sec:main results}
Now, we have all the necessary ingredients to present the main results. Proofs for this section can be found in Section \ref{sec:pf_main results}. We begin with some standing assumptions on the  noise measure $\xi$. 
    \begin{assume}\label{assume:noise}
 We have the following:
        \begin{anumerate}
            \item\label{assume:noise_f} The noise distribution $\xi\in \sp_p((\R^d)^T)$  has density $f:(\R^d)^T\to \R$ which is continuously differentiable and $\nab f\in L^1((\R^d)^T; (\R^d)^T)$. In particular, for $\sigma>0$, we write  $f_{\sigma}(\cdot):=\sigma^{-dT}f(\cdot/\sigma)$ for the density of $\xi_{\sigma}$.
            \item\label{assume:noise_FT} The set of real zeros of $\widehat{f}$ (or equivalently $\widehat{\xi}$) has Lebesgue measure zero.
        \end{anumerate}
    \end{assume}
    Assumption \ref{assume:noise_f} is a mild growth condition on $f$. Note from Theorem \ref{thm:smoothawmetric} that \ref{assume:noise_FT} guarantees that $\a\w^{(\sigma)}_p$ (as well as $\w^{(\sigma)}_p$) is indeed a metric on the space of probability measures.

    \subsection{The adapted Wasserstein distance and the total variation norm}\label{sec:aw_tv}
     Recall that the $p$-Wasserstein distance $\w_p(\mu, \nu)$ can be controlled by the $p$-total variation norm, $\norm{\abs{\cdot}^p(\mu-\nu)}_{\tv}$. For example, \cite[Theorem 6.15]{villani2009optimal} shows that
     \begin{align}
         \w_p(\mu, \nu)
         \le 2^{1-1/p}\left(\int \abs{x}^p \abs{\mu-\nu}(dx)\right)^{1/p}.
     \end{align}
     Similarly, Proposition \ref{prop:awtv} shows that $\a\w_p$ is bounded by the total variation norm $\norm{\cdot}_{\tv}$ plus remainder terms that can be made arbitrarily small. This result is based on \cite[Lemma 3.5]{eckstein2024computational} stating that the metric $\av(\mu, \nu)=\inf_{\pi\in \cplbc(\mu, \nu)}\pi(\{x\neq y\})$ is equivalent to the total variation distance $\frac{1}{2}\norm{\mu-\nu}_{\tv}=\inf_{\pi\in \cpl(\mu, \nu)}\pi(\{x\neq y\})$.
    \begin{prop}\label{prop:awtv}
    Let $\mu, \nu\in \sp_p((\R^d)^T)$. For any $R>0$ we have
        \begin{align}
            \a\w_p(\mu, \nu)^p
            \le 6^{p-1}T2^T \left(R^p \norm{\mu-\nu}_{\tv}
            +\int_{\{\abs{x}\ge R\}} \abs{x}^p (\mu+\nu)(dx)\right).
        \end{align}
    \end{prop}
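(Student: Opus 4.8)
The plan is to estimate the $\a\w_p$-cost of a single bicausal coupling that is almost optimal for the adapted total variation $\av(\mu,\nu)$, splitting the relevant integral according to whether or not $\abs{x}<R$ (and similarly for $y$), and then invoking the equivalence of $\av$ and $\norm{\cdot}_{\tv}$ from \cite{eckstein2024computational}.

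Concretely, I would first fix $\eta>0$ and, using $\av(\mu,\nu)=\inf_{\pi\in\cplbc(\mu,\nu)}\pi(\{x\neq y\})$, pick $\pi^{*}\in\cplbc(\mu,\nu)$ with $\pi^{*}(\{x\neq y\})\le\av(\mu,\nu)+\eta$. Since $\pi^{*}$ is a bicausal coupling, $\a\w_p(\mu,\nu)^p\le\int\sum_{t=1}^{T}\abs{x_t-y_t}^p\,\pi^{*}(dx,dy)$, and as the integrand vanishes on $\{x=y\}$, it suffices to estimate the integral over $\{x\neq y\}$.

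The core estimate uses only elementary inequalities: $\abs{x_t-y_t}^p\le 2^{p-1}(\abs{x_t}^p+\abs{y_t}^p)$ and $\abs{x_t}\le\abs{x}$ (the latter because $\abs{\cdot}$ is the Euclidean norm on $\R^{dT}=(\R^d)^T$), so that $\sum_{t=1}^{T}\abs{x_t-y_t}^p\le 2^{p-1}T(\abs{x}^p+\abs{y}^p)$ on $\{x\neq y\}$. I would then split $\int_{\{x\neq y\}}\abs{x}^p\,\pi^{*}$ into the part where $\abs{x}<R$, bounded by $R^p\,\pi^{*}(\{x\neq y\})$, and the part where $\abs{x}\ge R$, bounded by $\int_{\{\abs{x}\ge R\}}\abs{x}^p\,\mu(dx)$ after dropping $\ind{\{x\neq y\}}$ and using that $\pi^{*}$ has first marginal $\mu$; symmetrically for $\abs{y}^p$ with marginal $\nu$. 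Recombining gives
\[
\a\w_p(\mu,\nu)^p\le 2^{p-1}T\Bigl(2R^p\,\pi^{*}(\{x\neq y\})+\int_{\{\abs{x}\ge R\}}\abs{x}^p\,(\mu+\nu)(dx)\Bigr).
\]
Letting $\eta\downarrow 0$ replaces $\pi^{*}(\{x\neq y\})$ by $\av(\mu,\nu)$; then \cite[Lemma~3.5]{eckstein2024computational} bounds $\av(\mu,\nu)$ by a constant of order $2^{T}$ times $\norm{\mu-\nu}_{\tv}$, and collecting the numerical factors produces the constant $6^{p-1}T2^{T}$.

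The conceptual point --- which is what makes the argument work at all --- is that no structural information about the optimal total-variation coupling is used: only that $\pi^{*}(\{x\neq y\})$ is small and that $\pi^{*}$ has the prescribed marginals. Correspondingly, the over-counting caused by discarding the indicator $\ind{\{x\neq y\}}$ on $\{\abs{x}\ge R\}$ is harmless, since it only inflates the tail term $\int_{\{\abs{x}\ge R\}}\abs{x}^p(\mu+\nu)(dx)$, which already appears in the bound. I expect the only mildly delicate point to be the bookkeeping of constants: one must track the explicit constant in the Eckstein--Pammer equivalence and check that it combines with $2^{p-1}$ and the factor $T$ (coming from $\sum_t\abs{x_t}^p\le T\abs{x}^p$) to at most $6^{p-1}T2^{T}$; if a sharper accounting is wanted, one can instead split $\{x\neq y\}$ into $\{\abs{x}<R,\ \abs{y}<R\}$ --- on which $\sum_t\abs{x_t-y_t}^p\le T(2R)^p$ --- and its complement.
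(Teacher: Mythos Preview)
Your argument is correct and in fact more direct than the paper's. The paper does not work with a near-optimal $\av$-coupling directly; instead it introduces a truncation map $\Phi^{(R)}$ (Lemma~\ref{lem:cptapprox}), proves the separate estimate $\a\w_p(\mu,\Phi^{(R)}_{\#}\mu)^p\le 2^pT^2\int_{\{\abs{x}\ge R\}}\abs{x}^p\mu(dx)$ via an explicitly constructed bicausal coupling, bounds $\a\w_p(\Phi^{(R)}_{\#}\mu,\Phi^{(R)}_{\#}\nu)$ using that these measures are supported in a ball of radius $R$ together with the Eckstein--Pammer lemma, and then combines via the triangle inequality with a $3^{p-1}$ factor. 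Your route avoids the truncation lemma entirely by splitting the $\pi^{*}$-integral according to $\{\abs{x}\ge R\}$ and $\{\abs{y}\ge R\}$, which is simpler and yields the sharper leading constant $2^{p-1}T(2^T-1)$ rather than $6^{p-1}T2^T$. What the paper's detour buys is that Lemma~\ref{lem:cptapprox} is of independent interest (it shows compactly supported measures are $\a\w_p$-dense with an explicit rate), whereas your argument is tailored to this proposition alone.
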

    \begin{rmk}
        In Proposition~\ref{prop:awtv}, the bounding constant $6^{p-1}T2^T$ depends exponentially on $T$. This may lead to slow convergence when $T$ is large. This exponential dependence arises from \cite[Lemma 3.5]{eckstein2024computational}, which shows that
        \begin{align}
            \av(\mu, \nu)\le C\norm{\mu-\nu}_{\tv}\label{eq:av C tv}
        \end{align}
        for a constant $C$ that grows exponentially with $T$; see Section~\ref{sec:pf_aw_tv} for details. In a recent preprint, Acciaio et al. \cite{acciaio2025estimating} show that \eqref{eq:av C tv} holds with $C=2T-1$, improving the exponential bound in \cite{eckstein2024computational} to a polynomial bound. As a consequence, the constant $6^{p-1}T2^{T}$ can be replaced by one that has polynomial growth in $T$.
    \end{rmk}
    Our first main result, Theorem \ref{thm:awsigmaw1}, asserts that the smooth $p$-adapted Wasserstein distance $\a\w^{(\sigma)}_p$ can be bounded by the $1$-Wasserstein distance $\w_1$. 
    \begin{thm}\label{thm:awsigmaw1}
    Let $\mu, \nu\in \sp_p((\R^d)^T)$. For any $R>0$ and $\sigma>0$ we have
        \begin{align}
            \a\w^{(\sigma)}_p(\mu, \nu)^p
            \le 6^{p-1}T2^T \left(R^p\norm{\nab f}_{L^1}\frac{\w_1(\mu, \nu)}{\sigma}
            +\int_{\{\abs{x}\ge R\}} \abs{x}^p (\mu^{\sigma}+\nu^{\sigma})(dx)\right).
        \end{align}
    \end{thm}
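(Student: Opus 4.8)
The plan is to derive the bound directly from Proposition \ref{prop:awtv}, applied not to $(\mu,\nu)$ but to the smoothed pair $(\mu^\sigma,\nu^\sigma)$, combined with one auxiliary estimate: that convolving with $\xi_\sigma$ turns a $\w_1$-bound into a total-variation bound at rate $1/\sigma$. Since $\xi,\mu,\nu$ all have finite $p$-moments, so do $\mu^\sigma$ and $\nu^\sigma$, and by definition $\a\w^{(\sigma)}_p(\mu,\nu)=\a\w_p(\mu^\sigma,\nu^\sigma)$; hence Proposition \ref{prop:awtv} immediately gives
\[
\a\w^{(\sigma)}_p(\mu,\nu)^p\le 6^{p-1}T2^T\left(R^p\,\norm{\mu^\sigma-\nu^\sigma}_{\tv}+\int_{\{\abs{x}\ge R\}}\abs{x}^p(\mu^\sigma+\nu^\sigma)(dx)\right).
\]
Comparing with the target inequality, the entire content of the theorem reduces to proving
\[
\norm{\mu^\sigma-\nu^\sigma}_{\tv}\le\norm{\nab f}_{L^1}\,\frac{\w_1(\mu,\nu)}{\sigma}.
\]

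To establish this, I would argue as follows. By Assumption \ref{assume:noise_f}, $\mu^\sigma=\mu\ast\xi_\sigma$ has density $x\mapsto\int f_\sigma(x-u)\,\mu(du)$, and similarly for $\nu^\sigma$, so $\norm{\mu^\sigma-\nu^\sigma}_{\tv}$ is the $L^1((\R^d)^T)$-norm of the difference of these densities. Fixing an optimal coupling $\pi\in\cpl(\mu,\nu)$ for $\w_1$, I would write the density difference at $x$ as $\int(f_\sigma(x-u)-f_\sigma(x-v))\,\pi(du,dv)$, apply the triangle inequality and Tonelli, and thereby reduce to bounding $\int_{(\R^d)^T}\abs{f_\sigma(x-u)-f_\sigma(x-v)}\,dx$ for fixed $u,v$. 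Since $f\in C^1$ implies $f_\sigma\in C^1$, the fundamental theorem of calculus along the segment joining $v$ and $u$ gives $f_\sigma(x-u)-f_\sigma(x-v)=\int_0^1\nab f_\sigma(x-v-s(u-v))\cdot(v-u)\,ds$, so that $\abs{f_\sigma(x-u)-f_\sigma(x-v)}\le\abs{u-v}\int_0^1\abs{\nab f_\sigma(x-v-s(u-v))}\,ds$; integrating in $x$ and using translation invariance of Lebesgue measure yields $\int_{(\R^d)^T}\abs{f_\sigma(x-u)-f_\sigma(x-v)}\,dx\le\abs{u-v}\,\norm{\nab f_\sigma}_{L^1}$. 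Finally, $\nab f_\sigma(x)=\sigma^{-dT-1}\nab f(x/\sigma)$ gives $\norm{\nab f_\sigma}_{L^1}=\sigma^{-1}\norm{\nab f}_{L^1}$ by a change of variables, and integrating the resulting pointwise bound against $\pi$ gives $\norm{\mu^\sigma-\nu^\sigma}_{\tv}\le\sigma^{-1}\norm{\nab f}_{L^1}\int\abs{u-v}\,\pi(du,dv)=\sigma^{-1}\norm{\nab f}_{L^1}\,\w_1(\mu,\nu)$, as required.

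I do not anticipate a real obstacle: the theorem is essentially Proposition \ref{prop:awtv} for $(\mu^\sigma,\nu^\sigma)$ plus the standard fact that convolution regularizes in total variation at rate $1/\sigma$. The only care needed is in the measure-theoretic bookkeeping — Tonelli's theorem applies because the integrand is nonnegative once absolute values are taken, and the fundamental theorem of calculus is legitimate precisely because Assumption \ref{assume:noise_f} guarantees $f_\sigma\in C^1$ with $\nab f_\sigma\in L^1$. If one prefers not to invoke existence of an optimizer for $\w_1$, the same computation applied to an arbitrary $\pi\in\cpl(\mu,\nu)$ followed by taking the infimum works equally well.
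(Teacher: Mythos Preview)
Your proposal is correct and follows the same overall route as the paper: apply Proposition \ref{prop:awtv} to $(\mu^\sigma,\nu^\sigma)$ and then bound $\norm{\mu^\sigma-\nu^\sigma}_{\tv}$ by $\sigma^{-1}\norm{\nab f}_{L^1}\w_1(\mu,\nu)$, which is exactly the paper's Lemma \ref{lem:tvsigmaw1}. The only difference is cosmetic: the paper proves the total-variation estimate on the dual side (test functions, showing $y\mapsto\int\varphi(x)f_\sigma(x-y)\,dx$ is $\sigma^{-1}\norm{\nab f}_{L^1}$-Lipschitz and invoking Kantorovich--Rubinstein), whereas you work on the primal side (densities, optimal coupling, fundamental theorem of calculus along a segment); both are standard and equivalent.
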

    Under additional assumptions on the measures $\mu$ and $\nu$, we can actually drop the additive terms characterizing the tail behavior.

    \begin{cor}\label{cor:awsigmaw1_mom}
        For any $\sigma>0$ we have the following:
        \begin{anumerate}
            \item\label{cor:awsigmaw1_mom_q} If $\mu, \nu, \xi\in \sp_q((\R^d)^T)$ for some $q>p$, then
            \begin{align}
                \a\w^{(\sigma)}_p(\mu, \nu)^p
                \le 6^{p-1}T2^{T+1}(M_q(\mu^{\sigma}+\nu^{\sigma}))^{p/q}\left(\norm{\nab f}_{L^1}\frac{\w_1(\mu, \nu)}{\sigma}\right)^{1-p/q}.
                \label{eq:awsigmaw1_mom_q}
            \end{align}
            \item\label{cor:awsigmaw1_mom_cpt} If $\mu, \nu$ and $\xi$ have bounded supports, then
            \begin{align}
                 \a\w^{(\sigma)}_p(\mu, \nu)^p
                 \le 6^{p-1}T2^{T+2}\frac{(\diam(\supp(\mu)\cup\supp(\nu))+\sigma\diam(\supp(\xi)))^p}{\sigma}\norm{\nab f}_{L^1}\w_1(\mu, \nu).\label{eq:awsigmaw1_mom_cpt}
            \end{align}
            \item\label{cor:awsigmaw1_mom_gaussian} Suppose $\xi\sim \n(0, \Id_{dT})$. If $\mu, \nu\in \sp_q((\R^d)^T)$ for some $q>p$, then for $0<\sigma_0<\sigma$,
            \begin{align}
                \a\w^{(\sigma)}_p(\mu, \nu)^p
                \le 6^{p-1}T2^{T+1}\norm{\nab f}^{1-p/q}_{L^1}(M_q(\mu^{\sigma}+\nu^{\sigma}))^{p/q}\left(\frac{\w^{(\sigma_0)}_1(\mu, \nu)}{\sqrt{\sigma^2-\sigma^2_0}}\right)^{1-p/q}.
            \end{align}
        \end{anumerate}
    \end{cor}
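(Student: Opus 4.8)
The plan is to deduce all three assertions directly from Theorem~\ref{thm:awsigmaw1}, which for every $R>0$ bounds $\a\w^{(\sigma)}_p(\mu,\nu)^p$ by $6^{p-1}T2^T$ times the sum of the \emph{Wasserstein term} $R^p\norm{\nab f}_{L^1}\w_1(\mu,\nu)/\sigma$ and the \emph{tail term} $\int_{\{\abs{x}\ge R\}}\abs{x}^p(\mu^\sigma+\nu^\sigma)(dx)$. In each part the extra hypothesis is used only to estimate the tail term, after which $R$ is chosen --- optimally in (a) and (c), and just large enough to annihilate the tail in (b).

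For (a), I would first note that $\mu^\sigma,\nu^\sigma\in\sp_q((\R^d)^T)$ by Minkowski's inequality, so $M_q(\mu^\sigma+\nu^\sigma)<\infty$, and that on $\{\abs{x}\ge R\}$ one has $\abs{x}^p\le R^{p-q}\abs{x}^q$ since $q>p$; hence the tail term is at most $R^{p-q}M_q(\mu^\sigma+\nu^\sigma)$. Writing $A:=\norm{\nab f}_{L^1}\w_1(\mu,\nu)/\sigma$ and $B:=M_q(\mu^\sigma+\nu^\sigma)$, Theorem~\ref{thm:awsigmaw1} gives $\a\w^{(\sigma)}_p(\mu,\nu)^p\le 6^{p-1}T2^T(AR^p+BR^{p-q})$ for every $R>0$. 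Minimizing $R\mapsto AR^p+BR^{p-q}$ over $R>0$ (the minimizer is $R_\star=((q-p)B/(pA))^{1/q}$) yields the minimal value $B^{p/q}A^{1-p/q}/(r^r(1-r)^{1-r})$ with $r:=p/q\in(0,1)$, and since $r^r(1-r)^{1-r}=\exp\!\big(r\log r+(1-r)\log(1-r)\big)\ge\exp(-\log 2)=\tfrac12$ (the exponent is minimized at $r=\tfrac12$), the reciprocal factor is at most $2$; this produces the constant $6^{p-1}T2^{T+1}$ in \eqref{eq:awsigmaw1_mom_q}. The degenerate cases $A=0$ (i.e.\ $\mu=\nu$) and $B=0$ (impossible, since $\mu^\sigma$ has a density) need no separate argument.

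For (b), I would use translation invariance: translating $\mu$ and $\nu$ by a common vector, and separately translating $\xi$, leaves $\a\w^{(\sigma)}_p(\mu,\nu)$, $\w_1(\mu,\nu)$, $\norm{\nab f}_{L^1}$, $\diam(\supp\xi)$ and $\diam(\supp\mu\cup\supp\nu)$ all unchanged (the first because a common shift preserves bicausality and the transport cost while $\mu\ast\xi_\sigma$ transforms accordingly, the rest being manifestly translation invariant), and it preserves Assumption~\ref{assume:noise}. Hence I may assume $0\in\supp\mu\cup\supp\nu$ and $0\in\supp\xi$, so that with $D_1:=\diam(\supp\mu\cup\supp\nu)$ and $D_2:=\diam(\supp\xi)$ the supports of $\mu^\sigma$ and $\nu^\sigma$ lie in $\bar B(0,D_1)+\sigma\bar B(0,D_2)=\bar B(0,D_1+\sigma D_2)$. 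As $\mu^\sigma,\nu^\sigma$ are absolutely continuous, taking $R=D_1+\sigma D_2$ kills the tail term, and Theorem~\ref{thm:awsigmaw1} yields \eqref{eq:awsigmaw1_mom_cpt} --- in fact with $2^T$ in place of $2^{T+2}$.

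For (c), I would exploit the Gaussian semigroup property: with $\xi\sim\n(0,\Id_{dT})$ and $0<\sigma_0<\sigma$, set $\tau:=\sqrt{\sigma^2-\sigma_0^2}$, so that $\xi_\sigma=\xi_{\sigma_0}\ast\xi_\tau$ and hence $\mu^\sigma=\mu^{\sigma_0}\ast\xi_\tau$, $\nu^\sigma=\nu^{\sigma_0}\ast\xi_\tau$, i.e.\ $\a\w^{(\sigma)}_p(\mu,\nu)=\a\w^{(\tau)}_p(\mu^{\sigma_0},\nu^{\sigma_0})$. Since $\mu^{\sigma_0},\nu^{\sigma_0}\in\sp_q$ (Minkowski) and $\xi$ has all moments, I apply part (a) to the pair $(\mu^{\sigma_0},\nu^{\sigma_0})$ with bandwidth $\tau$, and then substitute $(\mu^{\sigma_0})^\tau=\mu^\sigma$, $(\nu^{\sigma_0})^\tau=\nu^\sigma$, $\w_1(\mu^{\sigma_0},\nu^{\sigma_0})=\w^{(\sigma_0)}_1(\mu,\nu)$ and $\tau=\sqrt{\sigma^2-\sigma_0^2}$, which is exactly the stated inequality. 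I do not anticipate any real obstacle in this corollary; the only spots demanding a little care are the elementary bound $r^r(1-r)^{1-r}\ge\tfrac12$ in (a) and the verification, in (b), that translating $\xi$ preserves both Assumption~\ref{assume:noise} and every quantity appearing in the bound.
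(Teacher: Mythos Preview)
Your proposal is correct. Parts (a) and (c) follow exactly the paper's route: for (a) the paper also bounds the tail by $R^{p-q}M_q(\mu^\sigma+\nu^\sigma)$ and optimizes in $R$, although instead of locating the true minimizer and invoking $r^r(1-r)^{1-r}\ge\tfrac12$ it simply plugs in $R=(B/A)^{1/q}$, which directly gives the factor $2$; for (c) the paper uses the same Gaussian semigroup reduction $\a\w^{(\sigma)}_p(\mu,\nu)=\a\w^{(\sqrt{\sigma^2-\sigma_0^2})}_p(\mu^{\sigma_0},\nu^{\sigma_0})$ followed by (a).

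The one genuine difference is in (b). You go straight back to Theorem~\ref{thm:awsigmaw1} and choose $R=D_1+\sigma D_2$ so that the tail term vanishes (the absolute continuity of $\mu^\sigma,\nu^\sigma$ handling the boundary sphere), which is cleaner and, as you note, yields the sharper constant $6^{p-1}T2^T$. The paper instead bounds $(M_q(\mu^\sigma+\nu^\sigma))^{p/q}\le 2(D_1+\sigma D_2)^p$ under the same normalization $0\in(\supp\mu\cup\supp\nu)\cap\supp\xi$ and lets $q\to\infty$ in~\eqref{eq:awsigmaw1_mom_q}, picking up two extra factors of $2$ and arriving at $6^{p-1}T2^{T+2}$. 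Your argument is more elementary and strictly stronger; the paper's has the minor aesthetic advantage of deriving (b) formally as a limiting case of (a).
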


    Combining Theorem \ref{thm:awsigmaw1} with the slow rate of $\w^{(\sigma)}_1$ yields the following upper bound on the convergence rate between the true and empirical measure in $\a\w^{(\sigma)}_p$. Similar bounds were announced in \cite{hou2024convergence}.

    \begin{cor}[Slow rate]\label{cor:slowrate} Let $\mu\in \sp_q((\R^d)^T)$ for some $q>p\vee (dT+2)$. Suppose $\xi\sim \n(0, \Id_{dT})$. Denote by $\boldsymbol{\mu_n}$ the empirical measure of $\mu$ with sample size $n\in \N$, i.e. $\boldsymbol{\mu_n}=\frac{1}{n}\sum_{j=1}^n \delta_{X^{(j)}}$ where $X^{(1)}, X^{(2)}, \ldots, X^{(n)}$ are i.i.d samples of $\mu$. Then
        \begin{align}
            \E[\a\w^{(\sigma)}_p(\mu, \boldsymbol{\mu_n})^p]
            \le C\left(\frac{1+\sigma^{(q+p)/2}}{\sigma^{(1+dT/2)(1-p/q)}}\right) n^{-(q-p)/(2q)}
        \end{align}
        for some constant $C>0$ that depends only on $d, T, p, q, M_q(\mu)$.
    \end{cor}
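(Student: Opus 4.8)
The plan is to combine Corollary \ref{cor:awsigmaw1_mom}\ref{cor:awsigmaw1_mom_q} with a known slow-rate bound for the smooth Wasserstein distance $\w^{(\sigma)}_1$. Setting $\nu = \boldsymbol{\mu_n}$ in \eqref{eq:awsigmaw1_mom_q} gives
\begin{align}
\a\w^{(\sigma)}_p(\mu, \boldsymbol{\mu_n})^p
\le 6^{p-1}T2^{T+1}\left(M_q(\mu^{\sigma}+\boldsymbol{\mu_n}^{\sigma})\right)^{p/q}\left(\norm{\nab f}_{L^1}\frac{\w_1(\mu, \boldsymbol{\mu_n})}{\sigma}\right)^{1-p/q}.
\end{align}
Taking expectations, the main task is to control $\E[(M_q(\mu^{\sigma}+\boldsymbol{\mu_n}^{\sigma}))^{p/q}\,\w_1(\mu,\boldsymbol{\mu_n})^{1-p/q}]$. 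Since $p/q + (1-p/q) = 1$, Hölder's inequality with exponents $q/p$ and $q/(q-p)$ decouples the two factors:
\begin{align}
\E\!\left[\left(M_q(\mu^{\sigma}+\boldsymbol{\mu_n}^{\sigma})\right)^{p/q}\w_1(\mu, \boldsymbol{\mu_n})^{1-p/q}\right]
\le \left(\E[M_q(\mu^{\sigma}+\boldsymbol{\mu_n}^{\sigma})]\right)^{p/q}\left(\E[\w_1(\mu, \boldsymbol{\mu_n})]\right)^{1-p/q}.
\end{align}

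For the first factor, note $M_q(\mu^{\sigma}+\boldsymbol{\mu_n}^{\sigma}) = M_q(\mu^\sigma) + M_q(\boldsymbol{\mu_n}^\sigma)$, and since $\mu^\sigma = \mu \ast \xi_\sigma$ with $\xi_\sigma = \n(0,\sigma^2\Id_{dT})$, the moment bound $M_q(\mu\ast\xi_\sigma) \lesssim M_q(\mu) + \sigma^q M_q(\xi)$ follows from the inequality $|x+y|^q \le 2^{q-1}(|x|^q + |y|^q)$ and $M_q(\xi_\sigma)=\sigma^q M_q(\xi)$; moreover $\E[M_q(\boldsymbol{\mu_n}^\sigma)] = \E[|X^{(1)} + \sigma Z|^q] \lesssim M_q(\mu) + \sigma^q M_q(\xi)$ by the same argument. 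Since $M_q(\n(0,\Id_{dT}))$ is a finite constant depending only on $q, d, T$, we obtain $\E[M_q(\mu^\sigma + \boldsymbol{\mu_n}^\sigma)] \le C(1 + \sigma^q)$ with $C$ depending on $d,T,q,M_q(\mu)$. For the second factor, I would invoke the slow-rate bound for the smooth $1$-Wasserstein distance under Gaussian smoothing (cf.\ \cite{nietert2021smooth, goldfeld2020convergence}): for $\mu \in \sp_q((\R^d)^T)$ with $q > dT+2$,
\begin{align}
\E[\w^{(\sigma)}_1(\mu, \boldsymbol{\mu_n})]
= \E[\w_1(\mu^\sigma, \boldsymbol{\mu_n}^\sigma)]
\le C\,\frac{1+\sigma^{?}}{\sigma^{?}}\, n^{-1/2},
\end{align}
but note the corollary uses $\w_1(\mu,\boldsymbol{\mu_n})$, not $\w_1^{(\sigma)}$ — so here I would instead apply the \emph{non-smooth} slow rate $\E[\w_1(\mu,\boldsymbol{\mu_n})] \le C\, n^{-(q-dT)/(?)}$ valid for $q > dT+2$; more precisely, under a $q$-th moment assumption with $q > dT+2$ one has the dimension-independent-in-the-exponent rate $\E[\w_1(\mu,\boldsymbol{\mu_n})] \lesssim n^{-1/2}$ (this is exactly why the hypothesis $q > dT+2$ appears). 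Substituting both bounds, raising to the stated powers $p/q$ and $1-p/q$, and tracking the $\sigma$-powers — the factor $\sigma^{-(1-p/q)}$ from the explicit $1/\sigma$ in \eqref{eq:awsigmaw1_mom_q} together with the $\sigma$-dependence from the $\w_1$-rate — reassembles the claimed bound
\begin{align}
\E[\a\w^{(\sigma)}_p(\mu, \boldsymbol{\mu_n})^p]
\le C\left(\frac{1+\sigma^{(q+p)/2}}{\sigma^{(1+dT/2)(1-p/q)}}\right) n^{-(q-p)/(2q)}.
\end{align}

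The main obstacle is pinning down the precise slow-rate statement for $\w_1(\mu,\boldsymbol{\mu_n})$ with the exact $\sigma$- and $n$-dependence that produces the stated exponents — in particular reconciling whether the relevant input is $\w_1$ or $\w_1^{(\sigma)}$, and extracting the power $(q+p)/2$ in the numerator and $(1+dT/2)(1-p/q)$ in the denominator. My expectation is that the intended route actually uses $\w_1^{(\sigma)}$ via $\w_1(\mu^\sigma,\boldsymbol{\mu_n}^\sigma) = \w^{(\sigma)}_1(\mu,\boldsymbol{\mu_n})$ (consistent with $\a\w^{(\sigma)}_p(\mu,\boldsymbol{\mu_n}) = \a\w_p(\mu^\sigma,\boldsymbol{\mu_n}^\sigma)$ and applying Theorem \ref{thm:awsigmaw1} directly to $\mu^\sigma,\boldsymbol{\mu_n}^\sigma$ rather than Corollary \ref{cor:awsigmaw1_mom_q}); the slow rate for $\w^{(\sigma)}_1$ under a $q$-th moment condition with $q > dT+2$ from \cite{nietert2021smooth} carries exactly the $\sigma^{1+dT/2}$-type denominator, and the moment term $(M_q)^{p/q}$ contributes the $1+\sigma^{(q+p)/2}$ numerator after the Hölder split. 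Once the correct black-box rate is identified, the rest is bookkeeping of constants and exponents, all of which depend only on $d, T, p, q, M_q(\mu)$ as claimed.
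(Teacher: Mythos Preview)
Your initial route via Corollary \ref{cor:awsigmaw1_mom}\ref{cor:awsigmaw1_mom_q} leads to the \emph{unsmoothed} quantity $\w_1(\mu,\boldsymbol{\mu_n})$, and your claim that ``under a $q$-th moment assumption with $q>dT+2$ one has $\E[\w_1(\mu,\boldsymbol{\mu_n})]\lesssim n^{-1/2}$'' is false: the empirical rate for unsmoothed $\w_1$ in dimension $dT$ is of order $n^{-1/(dT)}$ (for $dT\ge 3$), regardless of how many moments $\mu$ has. The hypothesis $q>dT+2$ is there to make the \emph{smoothed} slow rate work, not the unsmoothed one. So as written the argument does not close, and the stated $\sigma$-exponents cannot come out of part (a) alone.

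The paper's proof uses Corollary \ref{cor:awsigmaw1_mom}\ref{cor:awsigmaw1_mom_gaussian} with $\sigma_0=\sigma/\sqrt{2}$, which is precisely the mechanism your last paragraph gropes toward but does not pin down. The point is the Gaussian-specific identity $\xi_\sigma=\xi_{\sigma_0}\ast\xi_{\sqrt{\sigma^2-\sigma_0^2}}$, so that $\a\w^{(\sigma)}_p(\mu,\boldsymbol{\mu_n})=\a\w^{(\sqrt{\sigma^2-\sigma_0^2})}_p(\mu^{\sigma_0},\boldsymbol{\mu_n}^{\sigma_0})$; applying part (a) to the pair $(\mu^{\sigma_0},\boldsymbol{\mu_n}^{\sigma_0})$ then yields a bound in terms of $\w_1(\mu^{\sigma_0},\boldsymbol{\mu_n}^{\sigma_0})=\w_1^{(\sigma_0)}(\mu,\boldsymbol{\mu_n})$, which \emph{does} satisfy the $n^{-1/2}$ slow rate $\E[\w_1^{(\sigma/\sqrt{2})}(\mu,\boldsymbol{\mu_n})]\le C(1+\sigma^{q/2})\sigma^{-dT/2}n^{-1/2}$ (the paper cites \cite{hou2024convergence}). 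After the H\"older split you already described, the moment factor contributes $(1+\sigma^p)$ and the $\w_1^{(\sigma_0)}$ factor contributes $((1+\sigma^{q/2})\sigma^{-1-dT/2})^{1-p/q}$; multiplying gives the numerator $1+\sigma^{(p+q)/2}$ and denominator $\sigma^{(1+dT/2)(1-p/q)}$ exactly. Your alternative suggestion of ``applying Theorem \ref{thm:awsigmaw1} directly to $\mu^\sigma,\boldsymbol{\mu_n}^\sigma$'' would produce $\a\w_p^{(\sigma)}(\mu^\sigma,\boldsymbol{\mu_n}^\sigma)$, which is the wrong object.
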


    \begin{rmk}
        If $\mu$ has a bounded support, then we have
        \begin{align}
            \E[\a\w^{(\sigma)}_p(\mu, \boldsymbol{\mu_n})^p]\le C n^{-\frac{1}{2}+\ep}
        \end{align}
        for all $\ep>0$. This recovers the slow rate $\E[\w^{(\sigma)}_p(\mu, \boldsymbol{\mu_n})^p]\le Cn^{-\frac{1}{2}}$ up to an $\ep$-loss.
    \end{rmk}

    \subsection{Smoothing and the modulus of continuity}\label{sec:bandwidth}
    Our second main result pertains to the bandwidth effect of $\mu^\sigma$. As shown in \cite[Theorem 2.4]{hou2024convergence}, the smoothing operation is stable in the sense that $\a\w_p(\mu, \mu^{\sigma})\to 0$ as $\sigma\to 0$. Here, we establish that convergence is controlled by the modulus of continuity defined in Definition \ref{df:modconti}.
        
    \begin{thm}[Bandwidth effect]\label{thm:muandmusigma} Let $\mu\in \sp_p((\R^d)^T)$. Then for any $\sigma>0$,
        \begin{align}
            \a\w_p(\mu, \mu^{\sigma})
            \le T(1\vee(M_p(\xi))^{1/p})\sum_{t=0}^{T-1}h^{t,p}_{\mu}(\sigma)
        \end{align}
        where $h^{t,p}_{\mu}$ is defined as follows
        \begin{align}
            h^{0,p}_{\mu}(\sigma)=\sigma, \quad 
            h^{t,p}_{\mu}(\sigma)=\omega_{\mu}^{t,p}\left(\sum_{s=0}^{t-1} h^{s,p}_{\mu}(\sigma)\right) \text{ for } t\in\{1,2, \ldots, T-1\}.
        \end{align}
    \end{thm}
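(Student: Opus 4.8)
The plan is to build a bicausal coupling between $\mu$ and $\mu^{\sigma}$ time step by time step, bounding the incremental transport cost at each step $t$ by the moment of the noise plus the modulus of continuity $\omega^{t,p}_{\mu}$ evaluated at the accumulated displacement from the previous steps. The starting point is the dynamic programming principle for $\a\w_p$ from \cite{backhoff2017causal}, which lets us write $\a\w_p(\mu,\mu^{\sigma})^p$ as an iterated infimum over one-step couplings of the kernels $\mu_{x_{1:t}}$ and $(\mu^{\sigma})_{y_{1:t}}$, integrated against a bicausal coupling of the pasts. Concretely, I would set up the recursion so that at time $t=1$ we couple $\mu_1$ with $\mu_1^{\sigma}=\mu_1\ast(\xi_\sigma)_1$; the natural coupling here is $(X_1, X_1 + \sigma Z_1)$ for $Z\sim\xi$ independent of $X$, giving a cost contribution of order $\sigma (M_p(\xi))^{1/p}=h^{0,p}_\mu(\sigma)$ (up to the dimensional constant). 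This handles the $h^0$ term.

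The inductive step is the heart of the argument. Suppose we have coupled $X_{1:t}\sim\mu_{1:t}$ and $Y_{1:t}\sim(\mu^{\sigma})_{1:t}$ bicausally with $\norm{X_{1:t}-Y_{1:t}}_{L^p}\le \sum_{s=0}^{t-1} h^{s,p}_\mu(\sigma)$. Now I need to couple the next-step kernels. Observe that $(\mu^{\sigma})_{y_{1:t}}$ is itself (a mixture of) $\mu_{x_{1:t}}\ast(\xi_\sigma)_{t+1}$-type objects — more precisely, the key structural fact is that convolution with $\xi_\sigma$ commutes appropriately with disintegration, so that one can realize $Y_{t+1}$ as (a kernel close to) $X'_{t+1} + \sigma Z_{t+1}$ where $X'_{t+1}$ is drawn from $\mu_{x'_{1:t}}$ for some $x'_{1:t}$ whose law, given the past, is controlled. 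The triangle inequality in $\w_p$ then splits the one-step cost into two pieces: (i) the cost of adding the noise $\sigma Z_{t+1}$, contributing $\sigma(M_p(\xi))^{1/p}$; and (ii) the cost of moving between $\mu_{X_{1:t}}$ and $\mu_{X'_{1:t}}$, which is bounded by $\w_p(\mu_{X_{1:t}},\mu_{X'_{1:t}})$ and hence, after taking $L^p$ norms and invoking the definition of the modulus of continuity together with Proposition \ref{prop:propertyofomega}\ref{prop:propertyofomega_const}, by $\omega^{t,p}_\mu(\norm{X_{1:t}-X'_{1:t}}_{L^p})$. The displacement $\norm{X_{1:t}-X'_{1:t}}_{L^p}$ is at most $\norm{X_{1:t}-Y_{1:t}}_{L^p}$ plus lower-order terms already accounted for, which by the induction hypothesis is $\le\sum_{s=0}^{t-1}h^{s,p}_\mu(\sigma)$; monotonicity of $\omega^{t,p}_\mu$ (Proposition \ref{prop:propertyofomega}\ref{prop:propertyofomega_inc}) then gives exactly $h^{t,p}_\mu(\sigma)=\omega^{t,p}_\mu(\sum_{s=0}^{t-1}h^{s,p}_\mu(\sigma))$ for piece (ii). Summing the $\ell^p$ (or $\ell^1$, absorbed into the constant $T$) contributions over $t=0,\dots,T-1$ and collecting the factor $(1\vee(M_p(\xi))^{1/p})$ yields the claimed bound.

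The main obstacle I anticipate is making the structural claim in step (ii) rigorous: namely, that the disintegrations of $\mu^{\sigma}=\mu\ast\xi_\sigma$ along the first $t$ coordinates can be represented, up to a coupling of the conditioning variable, in terms of the disintegrations of $\mu$ convolved with the relevant marginal of $\xi_\sigma$. Because $\xi$ is a general (not necessarily product) measure on $(\R^d)^T$, the noise components across time are correlated, so $(\mu^{\sigma})_{y_{1:t}}$ is genuinely a mixture over the past noise as well as over $x_{1:t}$, and one must track how conditioning on $y_{1:t}=x_{1:t}+\sigma z_{1:t}$ reweights the joint law of $(x_{1:t}, z_{1:t+1})$. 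I would handle this by working on the enlarged probability space carrying $(X, Z)$ with $X\sim\mu$, $Z\sim\xi$ independent, defining $Y:=X+\sigma Z$, and checking directly that the canonical coupling of $X$ and $Y$ on this space is bicausal (conditional independence of $Y_{1:t}$ and $X_{t+1:T}$ given $X_{1:t}$ is immediate; the other direction requires the noise structure) — or, if the naive coupling fails bicausality, perturbing it using a one-step optimal coupling of $\mu_{X_{1:t}}$ against the conditional law of $X_{1:t}$ given $Y_{1:t}$, which is where the modulus of continuity enters. Once the bicausal coupling is in hand, the cost estimate is the routine telescoping described above.
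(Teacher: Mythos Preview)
Your proposal is essentially correct and follows the same strategy as the paper: construct a bicausal coupling kernel-by-kernel, use the mixture representation of $(\mu^{\sigma})_{y_{1:t}}$ together with convexity of $\w_p^p$, then bound the resulting kernel-shift term via $\omega^{t,p}_\mu$ to obtain the recursive estimate. You also correctly identify the main obstacle, namely that the naive coupling $(X,X+\sigma Z)$ is only causal in one direction and that one must instead pass through the posterior of $X_{1:t}$ given $Y_{1:t}$.

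The one technical difference worth noting is how the paper resolves this obstacle. Rather than working with the DPP and auxiliary randomization $X'_{1:t}$ drawn from the posterior, the paper reverses the direction and builds a Monge coupling $\pi^{\ep}=(\id,G^{\ep})_{\#}\mu^{\sigma}$ from $\mu^{\sigma}$ to $\mu$ (Lemma~\ref{lem:epcoupling}), exploiting that $\mu^{\sigma}$ has a density so that near-optimal adapted transport maps exist at every step. This makes $y_{1:t}=G^{\ep}(x_{1:t})$ deterministic in $x_{1:t}$, so after applying the mixture formula \eqref{pfeq:convexity} and convexity one lands directly on $\E[\w_p(\mu_{X_{1:t}},\mu_{G^{\ep}(X_{1:t}+\sigma Z_{1:t})})^p]$ with $(X,Z)\sim\mu\otimes\xi$; both arguments of $\mu_{\cdot}$ are then automatically $\mu_{1:t}$-distributed and $\omega^{t,p}_\mu$ applies immediately. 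Your route via the posterior would also close, but carries an extra layer of conditional randomness; the Monge-map device is a clean way to avoid it.
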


    \begin{cor}\label{cor:uniform_moc}
         The following hold.
        \begin{anumerate}
            \item\label{cor:uniform_moc_holder} For $L>0$ and $0<\alpha\le 1$, there exists a constant $C>0$ that depends only on $T, (M_p(\xi))^{1/p}, L, \alpha$ such that
            \begin{align}
                \sup_{\mu\in \sp_{p, \alpha, L}((\R^d)^T)}\a\w_p(\mu, \mu^{\sigma})
                \le C \sum_{t=0}^{T-1}\sigma^{\alpha^{t}}.
            \end{align}
            \item\label{cor:uniform_moc_general} Let $K\subseteq \sp_p((\R^d)^T)$. If $\sup_{\mu\in K}\omega^{t,p}_{\mu}(\delta)\to 0$ as $\delta\to 0$ for all $t\in \{1,2,\ldots, T-1\}$, then
            \begin{align}
                \lim_{\sigma\searrow 0}\sup_{\mu\in K}\a\w_p(\mu, \mu^{\sigma})=0.
            \end{align}
            In particular,
            \begin{align}
                \lim_{\sigma\searrow 0}\sup_{\mu, \nu\in K}\abs{\a\w^{(\sigma)}_p(\mu, \nu)-\a\w_p(\mu, \nu)}
                =0.
            \end{align}
        \end{anumerate}
    \end{cor}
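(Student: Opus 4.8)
The plan is to deduce both parts directly from the Bandwidth effect Theorem \ref{thm:muandmusigma}, which provides the pointwise bound $\a\w_p(\mu,\mu^\sigma)\le T(1\vee (M_p(\xi))^{1/p})\sum_{t=0}^{T-1}h^{t,p}_\mu(\sigma)$. The task is therefore to control the iterated moduli $h^{t,p}_\mu$ uniformly over the relevant classes of measures. For part \ref{cor:uniform_moc_holder}, I would invoke Proposition \ref{prop:omega_holder}, which gives $\omega^{t,p}_\mu(\delta)\le L\delta^\alpha$ for every $\mu\in\sp_{p,\alpha,L}((\R^d)^T)$, and then show by induction on $t$ that $h^{t,p}_\mu(\sigma)\le C_t\,\sigma^{\alpha^t}$ for a constant $C_t$ depending only on $t,L,\alpha$ (and not on $\mu$). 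The base case is $h^{0,p}_\mu(\sigma)=\sigma$. For the inductive step, using the definition $h^{t,p}_\mu(\sigma)=\omega^{t,p}_\mu\bigl(\sum_{s=0}^{t-1}h^{s,p}_\mu(\sigma)\bigr)$ together with the Hölder bound and the (sub)additivity properties of Proposition \ref{prop:propertyofomega}, one gets $h^{t,p}_\mu(\sigma)\le L\bigl(\sum_{s=0}^{t-1}C_s\sigma^{\alpha^s}\bigr)^\alpha$; since the dominant term for small $\sigma$ is the one with the largest exponent $\alpha^{t-1}$ (as $\alpha\le 1$), and $(\sigma^{\alpha^{t-1}})^\alpha=\sigma^{\alpha^t}$, this yields the claimed form after absorbing constants — one should be slightly careful here and restrict to $\sigma\le 1$ (or state the constant accordingly) so that $\sigma^{\alpha^s}\le \sigma^{\alpha^{t-1}}$ for $s\le t-1$. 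Summing over $t$ and plugging into Theorem \ref{thm:muandmusigma} gives the bound $C\sum_{t=0}^{T-1}\sigma^{\alpha^t}$.

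For part \ref{cor:uniform_moc_general}, the hypothesis is that $\sup_{\mu\in K}\omega^{t,p}_\mu(\delta)\to 0$ as $\delta\to 0$ for each $t$; write $\bar\omega^{t}(\delta):=\sup_{\mu\in K}\omega^{t,p}_\mu(\delta)$, which is monotone increasing in $\delta$ (by Proposition \ref{prop:propertyofomega}\ref{prop:propertyofomega_inc}) and tends to $0$ at $0$. Define uniform iterated moduli $\bar h^{0}(\sigma)=\sigma$ and $\bar h^{t}(\sigma)=\bar\omega^{t}\bigl(\sum_{s=0}^{t-1}\bar h^{s}(\sigma)\bigr)$. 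By monotonicity of each $\omega^{t,p}_\mu$ and of the supremum, an easy induction shows $h^{t,p}_\mu(\sigma)\le \bar h^{t}(\sigma)$ for every $\mu\in K$. It remains to check $\bar h^{t}(\sigma)\to 0$ as $\sigma\searrow 0$ for each fixed $t$, which again follows by induction: $\bar h^{0}(\sigma)=\sigma\to 0$; if $\bar h^{s}(\sigma)\to 0$ for all $s<t$, then $\sum_{s=0}^{t-1}\bar h^{s}(\sigma)\to 0$, and since $\bar\omega^{t}$ is continuous at $0$ with $\bar\omega^{t}(0^+)=0$, we get $\bar h^{t}(\sigma)\to 0$. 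Feeding this into Theorem \ref{thm:muandmusigma} yields $\sup_{\mu\in K}\a\w_p(\mu,\mu^\sigma)\le T(1\vee(M_p(\xi))^{1/p})\sum_{t=0}^{T-1}\bar h^{t}(\sigma)\to 0$.

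Finally, for the last displayed claim of part \ref{cor:uniform_moc_general}, I would apply the triangle inequality for $\a\w_p$ (which is a metric by Theorem \ref{thm:smoothawmetric} under Assumption \ref{assume:noise}, or in any case satisfies the triangle inequality): for $\mu,\nu\in K$,
\begin{align}
\abs{\a\w^{(\sigma)}_p(\mu,\nu)-\a\w_p(\mu,\nu)}
=\abs{\a\w_p(\mu^\sigma,\nu^\sigma)-\a\w_p(\mu,\nu)}
\le \a\w_p(\mu,\mu^\sigma)+\a\w_p(\nu,\nu^\sigma),
\end{align}
and taking the supremum over $\mu,\nu\in K$ bounds the left-hand side by $2\sup_{\mu\in K}\a\w_p(\mu,\mu^\sigma)$, which tends to $0$ by what was just proved. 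I expect the main (though still modest) obstacle to be the bookkeeping in the inductive estimate of part \ref{cor:uniform_moc_holder} — specifically, correctly identifying that the slowest-decaying term controls the whole sum and handling the regime $\sigma>1$ versus $\sigma\le 1$ — rather than anything conceptually deep; everything else is a direct consequence of Theorem \ref{thm:muandmusigma}, Proposition \ref{prop:omega_holder}, and Proposition \ref{prop:propertyofomega}.
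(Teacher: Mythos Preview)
Your proposal is correct and follows essentially the same approach as the paper: both parts are deduced from Theorem \ref{thm:muandmusigma} by controlling the iterated moduli $h^{t,p}_\mu$, using Proposition \ref{prop:omega_holder} and induction for part \ref{cor:uniform_moc_holder}, and the uniform modulus $\bar h^t=h^{t,p}_K$ for part \ref{cor:uniform_moc_general}, with the final claim via the triangle inequality. The only cosmetic difference is that the paper avoids your $\sigma\le 1$ versus $\sigma>1$ case split by using subadditivity of $x\mapsto x^\alpha$ to bound $h^{t,p}_\mu(\sigma)\le C(\sigma^\alpha+\sigma^{\alpha^2}+\cdots+\sigma^{\alpha^t})$ directly, which holds for all $\sigma>0$.
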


    By choosing a suitably decaying sequence $(\sigma_n)_{n\in \N}$, $((\boldsymbol{\mu_n})^{\sigma_n})_{n\in \N}$ approaches $\mu$ in $\a\w_p$. The convergence rate depends on the dimension $dT$.
    
    \begin{cor}\label{cor:empiricalvaryingsigma}
        Let $\mu\in \sp_q((\R^d)^T)\cap \sp_{p, \alpha, L}((\R^d)^T)$ for some $q>p\vee (dT+2)$, $L>0$ and $0<\alpha\le 1$. Suppose $\xi\sim \n(0, \Id_{dT})$. Using the same notation as in Corollary~\ref{cor:slowrate}, there exists a constant $C>0$ that depends only on $d, T, p, q, M_q(\mu), L, \alpha$ such that
        \begin{align}
            \E[\a\w_p(\mu, (\boldsymbol{\mu_n})^{\sigma_n})^p]
            \le C n^{-\beta}
        \end{align}
        where
        \begin{align}
            \sigma_n = n^{-\frac{1-p/q}{(dT+2)(1-p/q)+2p\alpha^{T-1}}} \text{ and } \beta = \frac{p\alpha^{T-1}(1-p/q)}{(dT+2)(1-p/q)+2p\alpha^{T-1}}.
        \end{align}
    \end{cor}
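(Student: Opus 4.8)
The plan is to obtain the bound from the triangle inequality for $\a\w_p$ together with the two main estimates already established, and then optimize over $\sigma_n$. First I would split
\[
\a\w_p\bigl(\mu,(\boldsymbol{\mu_n})^{\sigma_n}\bigr)
\le \a\w_p\bigl(\mu,\mu^{\sigma_n}\bigr)+\a\w_p\bigl(\mu^{\sigma_n},(\boldsymbol{\mu_n})^{\sigma_n}\bigr)
= \a\w_p\bigl(\mu,\mu^{\sigma_n}\bigr)+\a\w^{(\sigma_n)}_p(\mu,\boldsymbol{\mu_n}),
\]
the last equality being just the definition of the smooth adapted Wasserstein distance. Raising to the $p$-th power and using $(a+b)^p\le 2^{p-1}(a^p+b^p)$, it suffices to control the expectation of each of the two terms on the right: the first is a deterministic \emph{bias} term, the second a \emph{statistical} term.

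For the bias term, since $\mu\in\sp_{p,\alpha,L}((\R^d)^T)$ and $\xi\sim\n(0,\Id_{dT})$, Corollary~\ref{cor:uniform_moc}\ref{cor:uniform_moc_holder} gives $\a\w_p(\mu,\mu^{\sigma_n})\le C\sum_{t=0}^{T-1}\sigma_n^{\alpha^{t}}$ with $C$ depending only on $d,T,p,L,\alpha$. Because $\sigma_n=n^{-\gamma}$ with $\gamma>0$ we have $\sigma_n\le1$, and since $\alpha^{t}\ge\alpha^{T-1}$ for $t\le T-1$ each summand is at most $\sigma_n^{\alpha^{T-1}}$; hence $\E[\a\w_p(\mu,\mu^{\sigma_n})^p]\le (CT)^p\sigma_n^{p\alpha^{T-1}}$. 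For the statistical term, $\mu\in\sp_q$ with $q>p\vee(dT+2)$ and the noise is standard Gaussian, so Corollary~\ref{cor:slowrate} applies with $\sigma=\sigma_n$ and yields
\[
\E\bigl[\a\w^{(\sigma_n)}_p(\mu,\boldsymbol{\mu_n})^p\bigr]
\le C\Bigl(\tfrac{1+\sigma_n^{(q+p)/2}}{\sigma_n^{(1+dT/2)(1-p/q)}}\Bigr)n^{-(q-p)/(2q)}
\le 2C\,\sigma_n^{-(dT+2)(1-p/q)/2}\,n^{-(1-p/q)/2},
\]
using $\sigma_n\le1$ in the numerator together with $(q-p)/(2q)=(1-p/q)/2$ and $(1+dT/2)(1-p/q)=(dT+2)(1-p/q)/2$.

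It remains to balance the two contributions. Substituting $\sigma_n=n^{-\gamma}$ turns the bias bound into $n^{-\gamma p\alpha^{T-1}}$ and the statistical bound into $n^{\gamma(dT+2)(1-p/q)/2-(1-p/q)/2}$. Equating the two exponents gives $\gamma\bigl(p\alpha^{T-1}+(dT+2)(1-p/q)/2\bigr)=(1-p/q)/2$, i.e.\ precisely $\gamma=\frac{1-p/q}{(dT+2)(1-p/q)+2p\alpha^{T-1}}$, the value prescribed in the statement; both exponents then equal $-\beta$ with $\beta=\frac{p\alpha^{T-1}(1-p/q)}{(dT+2)(1-p/q)+2p\alpha^{T-1}}$. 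Absorbing the finitely many $n$-independent constants (each depending only on $d,T,p,q,M_q(\mu),L,\alpha$) into a single $C$ finishes the argument.

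I do not expect a genuine obstacle here: the corollary is essentially an optimization over $\sigma_n$ that trades off Theorem~\ref{thm:muandmusigma} (through its Hölder corollary) against the slow rate of Corollary~\ref{cor:slowrate}. The only points requiring a little care are checking that $\sigma_n\le1$ so that $\sum_t\sigma_n^{\alpha^t}\lesssim\sigma_n^{\alpha^{T-1}}$ and so that the Gaussian numerator in Corollary~\ref{cor:slowrate} remains bounded, verifying that the constants produced by Corollaries~\ref{cor:uniform_moc} and~\ref{cor:slowrate} depend only on the advertised parameters, and carrying out the one-line bias–variance balance, which reproduces exactly the displayed $\sigma_n$ and $\beta$.
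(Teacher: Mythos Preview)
Your proposal is correct and follows essentially the same approach as the paper: split via the triangle inequality into the bias term $\a\w_p(\mu,\mu^{\sigma_n})$ and the statistical term $\a\w^{(\sigma_n)}_p(\mu,\boldsymbol{\mu_n})$, bound them using Corollary~\ref{cor:uniform_moc}\ref{cor:uniform_moc_holder} and Corollary~\ref{cor:slowrate} respectively, and then optimize over $\sigma_n=n^{-\gamma}$. If anything, you are slightly more explicit than the paper in justifying $\sigma_n\le 1$ and the reduction $\sum_t\sigma_n^{\alpha^t}\lesssim\sigma_n^{\alpha^{T-1}}$.
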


    \subsection{\texorpdfstring{Bounds on $\a\w_p$ in terms of $\w_1$}{TEXT}}\label{sec:boundingAW}
    Now, we turn our attention to bounding $\a\w_p$ in terms of $\w_1$. By introducing an auxiliary noise $\xi$ satisfying Assumption \ref{assume:noise}, a straightforward application of the triangle inequality to Theorem \ref{thm:awsigmaw1} and Theorem \ref{thm:muandmusigma} shows the following proposition.
    \begin{prop}\label{prop:boundingAWvanilla}
        Let $\mu, \nu\in \sp_p((\R^d)^T)$. Then for any $R>0$, $\sigma>0$ and $\xi$ satisfying Assumption \ref{assume:noise} we have
        \begin{align}
            \a\w_p(\mu, \nu)^p
            &\le 18^{p-1}T2^T R^p\norm{\nab f}_{L^1}\frac{\w_1(\mu, \nu)}{\sigma}
            +18^{p-1}T2^T\int_{\{\abs{x}\ge R\}} \abs{x}^p (\mu^{\sigma}+\nu^{\sigma})(dx)\\
            &+3^{p-1}T^p(1\vee M_p(\xi))\left(\sum_{t=0}^{T-1}h^{t,p}_{\mu}(\sigma)\right)^{p}
            +3^{p-1}T^p(1\vee M_p(\xi))\left(\sum_{t=0}^{T-1}h^{t,p}_{\nu}(\sigma)\right)^{p}.
        \end{align}
    \end{prop}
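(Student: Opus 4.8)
The plan is to derive Proposition~\ref{prop:boundingAWvanilla} as a direct consequence of the triangle inequality for $\a\w_p$, combined with the two main estimates already established. First I would invoke the triangle inequality in the metric space $(\sp_p((\R^d)^T), \a\w_p)$ to write
\begin{align}
\a\w_p(\mu, \nu)
\le \a\w_p(\mu, \mu^{\sigma}) + \a\w_p(\mu^{\sigma}, \nu^{\sigma}) + \a\w_p(\nu^{\sigma}, \nu)
= \a\w_p(\mu, \mu^{\sigma}) + \a\w^{(\sigma)}_p(\mu, \nu) + \a\w_p(\nu, \nu^{\sigma}),
\end{align}
using only that $\a\w_p$ satisfies the triangle inequality (Theorem~\ref{thm:smoothawmetric} guarantees $\a\w^{(\sigma)}_p$ is a metric, and $\a\w_p$ is standard) and the definition $\a\w^{(\sigma)}_p(\mu,\nu) = \a\w_p(\mu^{\sigma}, \nu^{\sigma})$.

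Next I would raise this to the $p$-th power using the elementary inequality $(a+b+c)^p \le 3^{p-1}(a^p + b^p + c^p)$, valid for $a,b,c \ge 0$ and $p \ge 1$ by convexity of $x \mapsto x^p$. This yields
\begin{align}
\a\w_p(\mu, \nu)^p
\le 3^{p-1}\a\w^{(\sigma)}_p(\mu, \nu)^p
+ 3^{p-1}\a\w_p(\mu, \mu^{\sigma})^p
+ 3^{p-1}\a\w_p(\nu, \nu^{\sigma})^p.
\end{align}
Now I would substitute the bound from Theorem~\ref{thm:awsigmaw1} for the first term, which contributes $3^{p-1}\cdot 6^{p-1}T2^T = 18^{p-1}T2^T$ times the parenthesized expression $R^p\norm{\nab f}_{L^1}\w_1(\mu,\nu)/\sigma + \int_{\{\abs{x}\ge R\}}\abs{x}^p(\mu^{\sigma}+\nu^{\sigma})(dx)$, and for the second and third terms the bound from Theorem~\ref{thm:muandmusigma}, which gives $\a\w_p(\mu,\mu^{\sigma})^p \le T^p(1\vee (M_p(\xi))^{1/p})^p(\sum_{t=0}^{T-1}h^{t,p}_{\mu}(\sigma))^p = T^p(1\vee M_p(\xi))(\sum_{t=0}^{T-1}h^{t,p}_{\mu}(\sigma))^p$ (noting $(1\vee a^{1/p})^p = 1 \vee a$ for $a \ge 0$), and symmetrically for $\nu$. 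Multiplying these by $3^{p-1}$ produces exactly the four terms displayed in the statement.

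There is essentially no obstacle here: the result is a bookkeeping combination of two nontrivial theorems via the triangle inequality, and the only points requiring a moment's care are the constant arithmetic ($3^{p-1}\cdot 6^{p-1} = 18^{p-1}$) and the simplification $(1 \vee (M_p(\xi))^{1/p})^p = 1 \vee M_p(\xi)$. I would also note in passing that the validity of the triangle inequality for $\a\w^{(\sigma)}_p$ — which is what lets us split off the middle term cleanly — relies on Assumption~\ref{assume:noise}\ref{assume:noise_FT}, already imposed in the hypothesis of the proposition; this is why the statement requires $\xi$ to satisfy Assumption~\ref{assume:noise} rather than merely $\xi \in \sp_p((\R^d)^T)$.
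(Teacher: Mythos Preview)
Your proof is correct and follows exactly the paper's approach: apply the triangle inequality for $\a\w_p$, raise to the $p$-th power via $(a+b+c)^p\le 3^{p-1}(a^p+b^p+c^p)$, and substitute Theorems~\ref{thm:awsigmaw1} and~\ref{thm:muandmusigma}. One small correction to your closing remark: the split uses only the triangle inequality for $\a\w_p$ itself (which holds unconditionally), not for $\a\w^{(\sigma)}_p$; Assumption~\ref{assume:noise} enters only through the $\norm{\nab f}_{L^1}$ term in Theorem~\ref{thm:awsigmaw1}.
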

    We remark that $R>0$, $\sigma>0$ and $\xi$ (or equivalently $f$) can be chosen arbitrarily in the above, as long as Assumption \ref{assume:noise} is satisfied. We present several estimates as outcomes of specific choices of $R$, $\sigma$ or $\xi$.

    \begin{cor}\label{cor:boundingAW}
        Let $K\subseteq \sp_p((\R^d)^T)$ be $\a\w_p$-relatively compact. Then there exists a constant $C>0$ that depends only on $d, T, p$ such that for all $0<\sigma<R$ and $\mu, \nu\in K$,
        \begin{align}
            \a\w_p(\mu, \nu)^p
            \le C\left(R^p \frac{\w_1(\mu, \nu)}{\sigma}
            +\sup_{\gamma\in K}\left(\sum_{t=0}^{T-1}h^{t,p}_{\gamma}(\sigma)\right)^p
            +\sup_{\gamma\in K}\int_{\{\abs{x}\ge R\}}\abs{x}^p \gamma(dx)\right).
        \end{align}
    \end{cor}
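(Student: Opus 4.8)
The plan is to deduce Corollary~\ref{cor:boundingAW} from Proposition~\ref{prop:boundingAWvanilla} by making a convenient choice of the noise distribution $\xi$ and then absorbing all the $\xi$-dependent quantities into the constant $C=C(d,T,p)$. First I would fix once and for all a concrete noise measure, e.g.\ $\xi\sim\n(0,\Id_{dT})$, and check that it satisfies Assumption~\ref{assume:noise}: the Gaussian density $f$ is smooth with $\nab f\in L^1$, and $\widehat f$ has no real zeros. For this fixed $\xi$, the quantities $\norm{\nab f}_{L^1}$ and $1\vee M_p(\xi)$ are explicit constants depending only on $d$, $T$, $p$, so Proposition~\ref{prop:boundingAWvanilla} already gives, for all $0<\sigma<R$ and all $\mu,\nu\in K$,
\begin{align}
    \a\w_p(\mu,\nu)^p
    \le C\left(R^p\frac{\w_1(\mu,\nu)}{\sigma}
    +\sup_{\gamma\in K}\Bigl(\sum_{t=0}^{T-1}h^{t,p}_\gamma(\sigma)\Bigr)^p
    +\sup_{\gamma\in K}\int_{\{\abs{x}\ge R\}}\abs{x}^p(\mu^\sigma+\nu^\sigma)(dx)\right),
\end{align}
after replacing $\mu,\nu$ by $\gamma$ in the last supremum (legitimate since it only enlarges the bound).

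The remaining work is cosmetic: the corollary is stated with $\gamma(dx)$ rather than $\gamma^\sigma(dx)$ in the tail term. To fix this I would bound the tail of the convolved measure by the tail of $\gamma$ plus the tail of the noise. Concretely, if $X\sim\gamma$ and $Z\sim\xi_\sigma$ are independent, then on $\{\abs{X+Z}\ge R\}$ we have $\abs{X+Z}^p\le 2^{p-1}(\abs X^p+\abs Z^p)$ and $\{\abs{X+Z}\ge R\}\subseteq\{\abs X\ge R/2\}\cup\{\abs Z\ge R/2\}$, so
\begin{align}
    \int_{\{\abs x\ge R\}}\abs x^p\gamma^\sigma(dx)
    \le 2^{p-1}\E\bigl[\abs X^p\ind{\abs X\ge R/2}\bigr]
    +2^{p-1}\E\bigl[\abs X^p\ind{\abs Z\ge R/2}\bigr]
    +2^{p-1}\E\bigl[\abs Z^p\bigr].
\end{align}
The last term $2^{p-1}\E[\abs Z^p]=2^{p-1}\sigma^p M_p(\xi)\le 2^{p-1}R^p M_p(\xi)$ since $\sigma<R$, and this is $\le C(d,T,p)\,R^p$; the middle term is controlled by $M_p(\gamma)\,\P(\abs Z\ge R/2)$ which for fixed Gaussian $\xi$ and $\sigma<R$ is harmless (alternatively one simply bounds it by $M_p(\gamma)$, finite and uniformly bounded by $\w_p$-relative compactness of $K$). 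The dominant point is the first term, which is at most $2^{p-1}\sup_{\gamma\in K}\int_{\{\abs x\ge R/2\}}\abs x^p\gamma(dx)$; since $R$ is arbitrary one can simply rescale $R\mapsto 2R$, or absorb the factor two into $C$ and note the quantity $\sup_\gamma\int_{\{\abs x\ge R/2\}}\abs x^p\gamma(dx)$ differs from the stated one only by a relabeling of $R$. One should be slightly careful here: the term $R^p$ appearing from $\sigma^pM_p(\xi)$ has no $\w_1/\sigma$ factor, so strictly speaking it must be merged with the tail term rather than the transport term; since $\sup_\gamma\int_{\{\abs x\ge R\}}\abs x^p\gamma(dx)$ does \emph{not} dominate $R^p$ in general, the cleanest fix is to instead keep the convolved-measure tail term as is and remark that replacing $\gamma^\sigma$ by $\gamma$ is valid because we additionally have $\sigma<R$; I would state the corollary's tail term as written and justify it via the splitting above, noting that the extra $\sigma^p M_p(\xi)$ contribution is $\le M_p(\xi)R^p\cdot\sigma/R \le M_p(\xi)R^p\w_1(\mu,\nu)/\sigma$ is \emph{not} available either since $\w_1$ could be zero — hence the honest route is to simply keep an additive $C R^p$ term, or observe that $K$ relatively compact forces $\sup_{\gamma\in K}M_p(\gamma)<\infty$ and so $\sigma^p M_p(\xi) \le R^p M_p(\xi)$ can be absorbed by enlarging the tail term's implicit constant only if one accepts an additive constant; the statement as given is correct provided one reads the right-hand side as also implicitly allowing the harmless rescaling $R\to cR$.

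The genuinely substantive input is all upstream (Theorems~\ref{thm:awsigmaw1} and~\ref{thm:muandmusigma}, hence Proposition~\ref{prop:boundingAWvanilla}); the only mild obstacle in this corollary is the bookkeeping in the previous paragraph, namely verifying that the tail of $\mu^\sigma$ is comparable to the tail of $\mu$ up to constants and a rescaling of $R$, and confirming that fixing $\xi$ to be standard Gaussian indeed renders $\norm{\nab f}_{L^1}$ and $M_p(\xi)$ into pure $C(d,T,p)$ constants. Once that is done the displayed inequality follows directly. Finally, I would record the consequence that, since $K$ is $\a\w_p$-relatively compact, Proposition~\ref{prop:awrelcpt} guarantees $\sup_{\gamma\in K}\int_{\{\abs x\ge R\}}\abs x^p\gamma(dx)\to0$ as $R\to\infty$, and Corollary~\ref{cor:uniform_moc}\ref{cor:uniform_moc_general} (via the iterated modulus $h^{t,p}$) guarantees $\sup_{\gamma\in K}\sum_{t=0}^{T-1}h^{t,p}_\gamma(\sigma)\to0$ as $\sigma\to0$, which is what makes the bound useful: choosing $\sigma=\sigma(\w_1)$ and $R=R(\w_1)$ tending respectively to $0$ and $\infty$ slowly enough as $\w_1(\mu,\nu)\to0$ recovers the qualitative statement~\eqref{eq:gudi}.
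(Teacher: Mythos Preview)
Your overall strategy --- apply Proposition~\ref{prop:boundingAWvanilla} with a fixed noise $\xi$ and then reduce the tail of $\mu^\sigma$ to the tail of $\mu$ --- is exactly the paper's, but your bookkeeping breaks down and you explicitly concede you cannot absorb the leftover $\sigma^p M_p(\xi)$ term. The observation you are missing is that, by definition, $h^{0,p}_\gamma(\sigma)=\sigma$, so
\[
\sigma^p \;\le\; \Bigl(\sum_{t=0}^{T-1} h^{t,p}_\gamma(\sigma)\Bigr)^{p},
\]
and the stray $\sigma^p$ contribution is absorbed into the modulus-of-continuity term on the right-hand side, not into the tail term or the transport term. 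None of the absorptions you attempt (into $R^p$, into $R^p\w_1/\sigma$, or as an additive constant depending on $K$) give the stated inequality, whereas this one does immediately.

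A second, related point: the paper chooses $\xi$ supported in the unit ball rather than Gaussian. This makes the tail comparison a one-liner --- if $|z|\le 1$ and $\sigma<R$ then $\{|x+\sigma z|\ge 2R\}\subseteq\{|x|\ge R\}$, giving
\[
\int_{\{|x|\ge 2R\}}|x|^p\,\mu^\sigma(dx)\;\le\;2^{p-1}\int_{\{|x|\ge R\}}|x|^p\,\mu(dx)+2^{p-1}\sigma^p,
\]
with no ``middle term'' $M_p(\gamma)\,\P(|Z|\ge R/2)$ to handle. Your Gaussian choice can in fact be pushed through, but it requires an extra step you did not take: split $M_p(\gamma)\le R^p+\int_{\{|x|\ge R\}}|x|^p\,\gamma(dx)$ and use Markov's inequality $\P(|Z|\ge R/2)\le 2^p M_p(\xi)\,\sigma^p/R^p$ to reduce the middle term to $C\sigma^p+C\int_{\{|x|\ge R\}}|x|^p\,\gamma(dx)$, both of which are now absorbable as above. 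Your fallback --- bounding the middle term by $\sup_{\gamma\in K}M_p(\gamma)$ via relative compactness --- would make the constant depend on $K$, contradicting the statement that $C=C(d,T,p)$.
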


    \begin{rmk}
        Recall from Proposition \ref{prop:awrelcpt} that the relative compactness of $K$ yields
        \begin{align}
            \lim_{\sigma\searrow 0}\sup_{\gamma\in K}\sum_{t=0}^{T-1}h^{t,p}_{\gamma}(\sigma)=0,\quad
            \lim_{R\to \infty}\sup_{\gamma\in K}\int_{\{\abs{x}\ge R\}}\abs{x}^p \gamma(dx)=0.
        \end{align}
    \end{rmk}

    \begin{cor}\label{cor:boundingAWcpt}
    Let $\mu, \nu\in \sp(F)$ for some bounded set $F\subseteq (\R^d)^T$.
    \begin{anumerate}
        \item\label{cor:boundingAWcpt_vanilla} There exists a constant $C>0$ that depends only on $d, T, p, \diam(F)$ such that for all $\sigma>0$,
        \begin{align}
            \a\w_p(\mu, \nu)^p
            \le C\left(\frac{(1+\sigma)^p}{\sigma}\w_1(\mu, \nu)+\left(\sum_{t=0}^{T-1}h^{t,p}_{\mu}(\sigma)\vee h^{t,p}_{\nu}(\sigma)\right)^{p}\right).
        \end{align}
        \item\label{cor:boundingAWcpt_holder} Further suppose that $\mu, \nu\in \sp_{p, \alpha, L}((\R^d)^T)$ for some $L>0$ and $0<\alpha\le 1$. Then there exists a constant $C>0$ that depends only on $d, T, p, \diam(F), L, \alpha$ such that
        \begin{align}
            \a\w_p(\mu, \nu)
            \le C\w_1(\mu, \nu)^{\alpha^{T-1}/(p\alpha^{T-1}+1)}.
        \end{align}
    \end{anumerate}
    \end{cor}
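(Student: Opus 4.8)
The plan is to derive both parts from Proposition~\ref{prop:boundingAWvanilla} by choosing the free parameters $\xi$, $R$ (and, in part (b), $\sigma$) suitably. First I would exploit invariance: $\a\w_p$, $\w_1$, the modulus $\omega^{t,p}_{\cdot}$ (hence the iterated moduli $h^{t,p}_{\cdot}$) and the class $\sp_{p,\alpha,L}((\R^d)^T)$ are all unchanged under a common translation of all $T$ coordinates, so translate once so that $F\subseteq\{|x|\le D\}$ with $D:=\diam(F)$; this ensures every constant below depends on $F$ only through $D$. Next, fix once and for all a $C^1$ probability density $f$ on $(\R^d)^T$ supported in the unit ball; the associated noise $\xi$ satisfies Assumption~\ref{assume:noise} (the zero-set condition holds whenever $\xi$ is compactly supported), and $\norm{\nab f}_{L^1}$ and $M_p(\xi)$ are then fixed constants depending only on $d,T,p$. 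For every $\sigma>0$ we then have $\supp(\mu^\sigma)\cup\supp(\nu^\sigma)\subseteq\{|x|\le D+\sigma\}$, so taking $R:=D+\sigma+1$ annihilates the tail term in Proposition~\ref{prop:boundingAWvanilla}.

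For part (a): feeding this $R$ and $\xi$ into Proposition~\ref{prop:boundingAWvanilla}, the tail term drops and what remains is $\a\w_p(\mu,\nu)^p\lesssim R^p\norm{\nab f}_{L^1}\w_1(\mu,\nu)/\sigma+(1\vee M_p(\xi))\big[(\sum_t h^{t,p}_\mu(\sigma))^p+(\sum_t h^{t,p}_\nu(\sigma))^p\big]$. Using $R=D+\sigma+1\le (D+1)(1+\sigma)$ to absorb $D$ into the constant, and $(\sum_t h^{t,p}_\mu(\sigma))^p+(\sum_t h^{t,p}_\nu(\sigma))^p\le 2(\sum_t h^{t,p}_\mu(\sigma)\vee h^{t,p}_\nu(\sigma))^p$, gives exactly the asserted bound with $C=C(d,T,p,\diam(F))$.

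For part (b): with the extra hypothesis $\mu,\nu\in\sp_{p,\alpha,L}((\R^d)^T)$, Proposition~\ref{prop:omega_holder} gives $\omega^{t,p}_\mu(\delta)\le L\delta^\alpha$, and a short induction on the recursion defining $h^{t,p}_\mu$ shows that for $0<\sigma\le 1$ one has $h^{t,p}_\mu(\sigma)\le C_t\sigma^{\alpha^t}$ with $C_t=C_t(L,\alpha,t)$ — the only point being that $\sigma\le\sigma^\alpha$ when $\sigma\le 1$, $\alpha\le 1$, so each partial sum $\sum_{s<t}h^{s,p}_\mu(\sigma)$ is of order $\sigma^{\alpha^{t-1}}$. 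Since $\alpha^t\ge\alpha^{T-1}$ for $t\le T-1$ and $\sigma\le 1$, this yields $\sum_{t=0}^{T-1}h^{t,p}_\mu(\sigma)\vee h^{t,p}_\nu(\sigma)\le C\sigma^{\alpha^{T-1}}$. Plugging this into part (a) and using $(1+\sigma)^p\le 2^p$ for $\sigma\le 1$ gives $\a\w_p(\mu,\nu)^p\le C\big(\w_1(\mu,\nu)/\sigma+\sigma^{p\alpha^{T-1}}\big)$ for all $0<\sigma\le 1$. If $0<\w_1(\mu,\nu)\le 1$, the choice $\sigma=\w_1(\mu,\nu)^{1/(p\alpha^{T-1}+1)}\in(0,1]$ balances the two terms and yields the claim; the case $\w_1(\mu,\nu)=0$ forces $\mu=\nu$, and the case $\w_1(\mu,\nu)>1$ follows from the crude bound $\a\w_p(\mu,\nu)\le 2DT^{1/p}$ (from boundedness of the supports) together with $\w_1(\mu,\nu)^{\alpha^{T-1}/(p\alpha^{T-1}+1)}\ge 1$.

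I expect no genuine obstacle here: the substance already lies in Proposition~\ref{prop:boundingAWvanilla} (itself built from Theorems~\ref{thm:awsigmaw1} and~\ref{thm:muandmusigma}). The steps needing care are the translation reduction (so that the constant sees $F$ only through $\diam(F)$), the choice of $R$ that kills the tail term, and the bookkeeping in the induction $h^{t,p}_\mu(\sigma)\lesssim\sigma^{\alpha^t}$ followed by the one-parameter optimization over $\sigma$.
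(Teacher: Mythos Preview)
Your proposal is correct and follows essentially the same route as the paper: translate so the support sits in a ball of radius $\diam(F)$, take a compactly supported smooth noise $\xi$, and apply Proposition~\ref{prop:boundingAWvanilla} (which the paper accesses equivalently via Corollary~\ref{cor:awsigmaw1_mom}\ref{cor:awsigmaw1_mom_cpt} plus Theorem~\ref{thm:muandmusigma}) with $R$ chosen large enough to kill the tail term; then for part~(b) bound the iterated moduli by $\sigma^{\alpha^{T-1}}$ and optimize in $\sigma$. The only cosmetic difference is that the paper handles all values of $\w_1(\mu,\nu)$ uniformly using $\w_1(\mu,\nu)\le 2\diam(F)$ rather than splitting into the cases $\w_1\le 1$ and $\w_1>1$, but both arguments are equally valid.
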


    \subsection{Comparison of topologies}\label{sec:topology}
    In this section, we complement the discussion in Section \ref{sec:awtopology} and Figure \ref{fig:implication} by giving concise mathematical statements.

    \begin{thm}[Slow decay]\label{thm:slowsigma} Let  $\mu\in \sp_p((\R^d)^T)$ and $(\mu_n)_{n\in \N}\subseteq \sp_p((\R^d)^T)$. Consider $(\sigma_n)_{n\in \N}\subseteq (0,\infty)$ such that $\sup_{n\in \N}\sigma_n<\infty$.
    \begin{anumerate}
        \item\label{thm:slowsigma_imply} If $\a\w^{(\sigma_n)}_p(\mu_n, \mu)\to 0$, then $\w_p(\mu_n, \mu)\to 0$. 
        \item\label{thm:slowsigma_reverse} If $\w_p(\mu_n, \mu)\to 0$ and $\frac{\w_1(\mu, \mu_n)}{\sigma_n}\to 0$, then $\a\w^{(\sigma_n)}_p(\mu_n, \mu)\to 0$.
    \end{anumerate}
    \end{thm}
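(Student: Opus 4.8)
For part \ref{thm:slowsigma_imply}, the plan is to exploit the trivial comparison $\w_p \le C\,\a\w_p$ (valid with $C=1$ for $1\le p\le 2$ and $C=T^{1/2-1/p}$ otherwise, as recalled in Section \ref{sec:awtopology}) applied to the smoothed measures, giving $\w^{(\sigma_n)}_p(\mu_n,\mu) = \w_p(\mu_n^{\sigma_n},\mu^{\sigma_n}) \le C\,\a\w_p(\mu_n^{\sigma_n},\mu^{\sigma_n}) = C\,\a\w^{(\sigma_n)}_p(\mu_n,\mu) \to 0$. It then remains to pass from $\w^{(\sigma_n)}_p \to 0$ with \emph{variable} bandwidth back to $\w_p \to 0$. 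Since $\sup_n \sigma_n =: \sigma^\ast < \infty$, I would estimate $\w_p(\mu_n,\mu)$ by inserting the smoothed measures: $\w_p(\mu_n,\mu) \le \w_p(\mu_n, \mu_n^{\sigma_n}) + \w^{(\sigma_n)}_p(\mu_n,\mu) + \w_p(\mu^{\sigma_n},\mu)$, and control $\w_p(\rho,\rho^{\sigma_n}) \le \sigma_n (M_p(\xi))^{1/p}$ for any $\rho$ (couple $X\sim\rho$ with $X+\sigma_n Z$, $Z\sim\xi$ independent). This gives $\w_p(\mu_n,\mu) \le \w^{(\sigma_n)}_p(\mu_n,\mu) + 2\sigma_n (M_p(\xi))^{1/p}$, which does not immediately vanish since $\sigma_n$ need not tend to $0$. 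To fix this, I would instead argue via relative compactness plus identification of limits: $\w^{(\sigma_n)}_p(\mu_n,\mu)\to 0$ forces tightness of $(\mu_n^{\sigma_n})$ and a uniform $p$-moment bound, which (using $\sup_n\sigma_n<\infty$ and the moment transfer above) yields tightness and uniform $p$-moments for $(\mu_n)$ itself; hence $(\mu_n)$ is $\w_p$-relatively compact. Any $\w_p$-limit point $\mu'$ of a subsequence satisfies $\mu'{}^{\sigma_{n_k}}$ close to $\mu'^{\,0}=\mu'$... here one must be careful that $\sigma_{n_k}$ may not converge; passing to a further subsequence with $\sigma_{n_k}\to\sigma_\infty\in[0,\sigma^\ast]$, continuity of convolution gives $\mu_{n_k}^{\sigma_{n_k}} \to \mu'\ast\xi_{\sigma_\infty}$ weakly, while the hypothesis gives $\mu_{n_k}^{\sigma_{n_k}}\to \mu^{\sigma_{n_k}}$, and $\mu^{\sigma_{n_k}}\to\mu\ast\xi_{\sigma_\infty}$; therefore $\mu'\ast\xi_{\sigma_\infty} = \mu\ast\xi_{\sigma_\infty}$, and by injectivity of convolution with $\xi_{\sigma_\infty}$ (guaranteed by Assumption \ref{assume:noise}\ref{assume:noise_FT}, taking Fourier transforms) we get $\mu'=\mu$. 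Thus every subsequence has a further subsequence converging to $\mu$, so $\w_p(\mu_n,\mu)\to 0$.

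For part \ref{thm:slowsigma_reverse}, the plan is a direct triangle-inequality estimate combined with Theorem \ref{thm:awsigmaw1} (our first main result). Fix $R>0$; Theorem \ref{thm:awsigmaw1} gives
\begin{align}
\a\w^{(\sigma_n)}_p(\mu_n,\mu)^p \le 6^{p-1}T2^T\left(R^p\norm{\nab f}_{L^1}\frac{\w_1(\mu_n,\mu)}{\sigma_n} + \int_{\{\abs{x}\ge R\}}\abs{x}^p(\mu_n^{\sigma_n}+\mu^{\sigma_n})(dx)\right).
\end{align}
The first term tends to $0$ for each fixed $R$ by the hypothesis $\w_1(\mu,\mu_n)/\sigma_n\to 0$ (noting $\w_1\le C'\w_p$, so $\w_1(\mu,\mu_n)/\sigma_n\to 0$ is consistent with the stated assumption, and is in fact exactly the stated assumption). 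For the tail term, I would use that $\w_p(\mu_n,\mu)\to 0$ implies $\w_p(\mu_n^{\sigma_n},\mu^{\sigma_n})\to 0$ (hence $(\mu_n^{\sigma_n})$ has uniformly integrable $p$-th moments together with $\mu^{\sigma_n}$), which combined with $\sup_n\sigma_n<\infty$ and $M_p(\xi)<\infty$ makes $\sup_n \int_{\{\abs x\ge R\}}\abs x^p(\mu_n^{\sigma_n}+\mu^{\sigma_n})(dx)$ arbitrarily small for $R$ large; concretely, $M_p(\rho^{\sigma_n}) \le 2^{p-1}(M_p(\rho) + \sigma_n^p M_p(\xi))$, and $\w_p$-convergence gives $M_p(\mu_n)\to M_p(\mu)$, yielding a uniform $p$-moment bound on $\{\mu_n^{\sigma_n}\}$, from which $\w_p$-convergence upgrades to uniform $p$-integrability. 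Then for any $\varepsilon>0$, pick $R$ so the tail contribution is below $\varepsilon$ uniformly in $n$, and then let $n\to\infty$ so the first term vanishes; conclude $\limsup_n \a\w^{(\sigma_n)}_p(\mu_n,\mu)^p \le 6^{p-1}T2^T\varepsilon$, and send $\varepsilon\to 0$.

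The main obstacle is the subsequence/limit-identification argument in part \ref{thm:slowsigma_imply}: because $\sigma_n$ is only assumed bounded (not convergent, not bounded away from zero), one cannot directly appeal to the fixed-$\sigma$ result $\w^{(\sigma)}_p\sim\w_p$ (Proposition \ref{prop:smoothW}), and must instead run a tightness-plus-uniqueness argument, being careful to extract a subsequence along which $\sigma_{n_k}$ converges and to invoke the Fourier-analytic injectivity of convolution with $\xi_{\sigma_\infty}$ even in the degenerate case $\sigma_\infty=0$ (where it is trivial). A secondary technical point throughout is the repeated upgrade from $\w_p$-convergence to uniform integrability of $p$-th moments, which is standard (it is the content of $\w_p$ metrizing weak convergence plus $p$-moment convergence) but should be stated cleanly once and reused.
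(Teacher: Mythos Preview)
Your proposal is correct in both parts, but for part \ref{thm:slowsigma_imply} you take a longer route than the paper. The paper first invokes the subsubsequence criterion (Lemma \ref{lem:subseq}) to reduce to the case $\sigma_n\to\sigma_0\in[0,\infty)$, and then applies Proposition \ref{prop:smoothW} twice: the Lipschitz estimate $\big|\w^{(\sigma_n)}_p-\w^{(\sigma_0)}_p\big|\le 2|\sigma_n-\sigma_0|(M_p(\xi))^{1/p}$ gives $\w^{(\sigma_0)}_p(\mu_n,\mu)\to 0$ directly from $\a\w^{(\sigma_n)}_p(\mu_n,\mu)\to 0$, and then the topological equivalence $\w^{(\sigma_0)}_p\sim\w_p$ finishes the argument in one line (the case $\sigma_0=0$ being trivial). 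Your tightness-plus-Fourier-injectivity identification is correct and essentially reproves the content of Proposition \ref{prop:smoothW} by hand; it works, but you should be a bit more careful that ``tightness and uniform $p$-moments for $(\mu_n)$'' really means uniform $p$-integrability (not just a bound on $\sup_n M_p(\mu_n)$), since that is what $\w_p$-relative compactness requires. This can be extracted from uniform $p$-integrability of $(\mu_n^{\sigma_n})$ together with $\sup_n\sigma_n<\infty$, but it takes an extra splitting argument. For part \ref{thm:slowsigma_reverse} your approach coincides with the paper's: apply Theorem \ref{thm:awsigmaw1} and control the tail term via $\w_p$-relative compactness of $\{\mu_n^{\sigma_n},\mu^{\sigma_n}:n\in\N\}$; the paper again streamlines this by first reducing to $\sigma_n\to\sigma_0$ so that this family is $\w_p$-convergent (to $\mu^{\sigma_0}$) and hence trivially relatively compact, whereas you argue uniform $p$-integrability directly.
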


    \begin{thm}[Fast decay]\label{thm:fastsigma} Let  $\mu\in \sp_p((\R^d)^T)$ and $(\mu_n)_{n\in \N}\subseteq \sp_p((\R^d)^T)$. Consider $(\sigma_n)_{n\in \N}\subseteq (0,\infty)$ such that $\sup_{n\in \N}\sigma_n<\infty$.
    \begin{anumerate}
        \item\label{thm:fastsigma_imply} If $\a\w_p(\mu_n, \mu)\to 0$, then $\a\w^{(\sigma_n)}_p(\mu_n, \mu)\to 0$. 
        \item\label{thm:fastsigma_reverse} If $\a\w^{(\sigma_n)}_p(\mu_n, \mu)\to 0$ and $h^{t,p}_{\mu_n}(\sigma_n)\to 0$ for all $t\in \{1,2,\ldots, T-1\}$, then $\a\w_p(\mu_n, \mu)\to 0$.
    \end{anumerate}
    \end{thm}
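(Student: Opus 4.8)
The plan is to establish both implications via triangle‑inequality estimates anchored on Theorem~\ref{thm:muandmusigma} and Theorem~\ref{thm:awsigmaw1}, after a subsequence reduction. To prove that a sequence $(a_n)$ of nonnegative reals tends to $0$ it suffices to show that every subsequence has a further subsequence with $a_n\to0$; since $\sup_n\sigma_n<\infty$, along any subsequence we may pass to one on which $\sigma_n\to\sigma_\infty\in[0,\infty)$, and we then distinguish $\sigma_\infty=0$ from $\sigma_\infty>0$. We will repeatedly use that $\omega^{t,p}_\mu$ and each $h^{t,p}_\mu$ are nondecreasing, that $h^{t,p}_\mu(\sigma)\ge\omega^{t,p}_\mu(\sigma)$ (the argument $\sum_{s<t}h^{s,p}_\mu(\sigma)$ of $\omega^{t,p}_\mu$ in the definition of $h^{t,p}_\mu$ is $\ge h^{0,p}_\mu(\sigma)=\sigma$), and that $\omega^{t,p}_\mu(\delta)\to0$ as $\delta\to0$ for a fixed $\mu$ (the Remark after Proposition~\ref{prop:awrelcpt}). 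An induction on $t$ then gives: if $\delta_n\to0$ and a family $\mathcal K$ satisfies $\sup_{\gamma\in\mathcal K}\omega^{t,p}_\gamma(\delta)\to0$ as $\delta\to0$ for every $t$, then $\sup_{\gamma\in\mathcal K}\sum_{t=0}^{T-1}h^{t,p}_\gamma(\delta_n)\to0$.

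\emph{Part (a).} If $\a\w_p(\mu_n,\mu)\to0$ then $\{\mu_n:n\in\N\}\cup\{\mu\}$ is $\a\w_p$-relatively compact, so Proposition~\ref{prop:awrelcpt} gives $\sup_n\omega^{t,p}_{\mu_n}(\delta)\to0$ as $\delta\to0$, and also $\w_1(\mu_n,\mu)\le\w_p(\mu_n,\mu)\to0$. On a subsequence with $\sigma_\infty=0$ I would bound $\a\w^{(\sigma_n)}_p(\mu_n,\mu)\le\a\w_p(\mu_n^{\sigma_n},\mu_n)+\a\w_p(\mu_n,\mu)+\a\w_p(\mu,\mu^{\sigma_n})$: the middle term vanishes by hypothesis, and by Theorem~\ref{thm:muandmusigma} together with the induction above so do the first and third (apply it with $\mathcal K=\{\mu_n:n\}$, resp.\ $\mathcal K=\{\mu\}$, and $\delta_n=\sigma_n\to0$). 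On a subsequence with $\sigma_\infty>0$ we have $\sigma_n\ge\sigma_\infty/2$ eventually, so Theorem~\ref{thm:awsigmaw1} gives, for any $R>0$, $\a\w^{(\sigma_n)}_p(\mu_n,\mu)^p\lesssim R^p\norm{\nab f}_{L^1}\,\w_1(\mu_n,\mu)/\sigma_n+\int_{\{|x|\ge R\}}|x|^p(\mu_n^{\sigma_n}+\mu^{\sigma_n})(dx)$; the first summand tends to $0$ since $\w_1(\mu_n,\mu)/\sigma_n\le2\w_1(\mu_n,\mu)/\sigma_\infty\to0$, and the tail term is small uniformly in $n$ once $R$ is large because $\w_p$-convergence of $(\mu_n)$ makes $\{|\cdot|^p\}$ uniformly integrable and this persists under convolution with the bounded‑bandwidth noise. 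Either way $\a\w^{(\sigma_n)}_p(\mu_n,\mu)\to0$ along the subsequence. (Alternatively, if one knows convolution with $\xi_\sigma$ does not increase $\a\w_p$, part (a) is immediate; I avoid relying on this.)

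\emph{Part (b).} The subsequence reduction and the hypotheses $\a\w^{(\sigma_n)}_p(\mu_n,\mu)\to0$, $h^{t,p}_{\mu_n}(\sigma_n)\to0$ are inherited by subsequences. On a subsequence with $\sigma_\infty=0$, use $\a\w_p(\mu_n,\mu)\le\a\w_p(\mu_n,\mu_n^{\sigma_n})+\a\w^{(\sigma_n)}_p(\mu_n,\mu)+\a\w_p(\mu,\mu^{\sigma_n})$: by Theorem~\ref{thm:muandmusigma} the first term is $\lesssim\sigma_n+\sum_{t=1}^{T-1}h^{t,p}_{\mu_n}(\sigma_n)\to0$ by the modulus hypothesis, the third is $\lesssim\sum_{t=0}^{T-1}h^{t,p}_\mu(\sigma_n)\to0$ since $\mu$ is fixed and $\sigma_n\to0$, and the middle vanishes by hypothesis. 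On a subsequence with $\sigma_\infty>0$, from $\sigma_n\ge\sigma_\infty/2$ eventually and the two elementary facts above we get $\omega^{t,p}_{\mu_n}(\sigma_\infty/2)\le\omega^{t,p}_{\mu_n}(\sigma_n)\le h^{t,p}_{\mu_n}(\sigma_n)\to0$; combining this with monotonicity of $\omega^{t,p}$ in $\delta$ and handling the finitely many initial $n$ via $\omega^{t,p}_{\mu_n}(\delta)\to0$ upgrades it to $\sup_n\omega^{t,p}_{\mu_n}(\delta)\to0$ as $\delta\to0$. Since $\a\w^{(\sigma_n)}_p(\mu_n,\mu)\to0$ forces $\w_p(\mu_n,\mu)\to0$ by Theorem~\ref{thm:slowsigma}\ref{thm:slowsigma_imply}, Proposition~\ref{prop:awrelcpt} makes $\{\mu_n:n\}\cup\{\mu\}$ $\a\w_p$-relatively compact, and a $\w_p$-convergent sequence inside an $\a\w_p$-relatively compact set converges in $\a\w_p$ to the same limit (any $\a\w_p$-convergent subsubsequence has $\w_p$-limit $\mu$, hence $\a\w_p$-limit $\mu$). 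Thus $\a\w_p(\mu_n,\mu)\to0$ along the subsequence.

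The main obstacle is the regime $\sigma_\infty>0$ permitted by $\sup_n\sigma_n<\infty$: there the triangle estimate through $\mu^{\sigma_n}$ does not close, because $\a\w_p(\mu,\mu^{\sigma_n})$ stays bounded away from $0$, which is why Theorem~\ref{thm:awsigmaw1} is invoked in part (a) and the compactness criterion of Proposition~\ref{prop:awrelcpt} in part (b). The only genuinely technical point is converting the scale‑$\sigma_n$ modulus information supplied by the hypotheses into the uniform‑in‑$n$ modulus control that Proposition~\ref{prop:awrelcpt} requires, for which the monotonicity of $\omega^{t,p}$ and the inequality $h^{t,p}_\mu\ge\omega^{t,p}_\mu$ are the decisive tools.
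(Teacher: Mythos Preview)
Your proposal is correct and follows essentially the same approach as the paper: the same subsequence reduction to $\sigma_n\to\sigma_\infty\in[0,\infty)$, the same triangle-inequality argument via Theorem~\ref{thm:muandmusigma} in the case $\sigma_\infty=0$, and the same use of Proposition~\ref{prop:awrelcpt} together with the inequality $\omega^{t,p}_{\mu_n}(\sigma_\infty/2)\le h^{t,p}_{\mu_n}(\sigma_n)$ in part~(b) when $\sigma_\infty>0$. The one cosmetic difference is that in part~(a) for $\sigma_\infty>0$ you unfold the estimate of Theorem~\ref{thm:awsigmaw1} directly, whereas the paper simply cites Theorem~\ref{thm:slowsigma}\ref{thm:slowsigma_reverse} (whose proof is exactly that unfolding); and you make explicit the step, only implicit in the paper, that $\w_p$-convergence inside an $\a\w_p$-relatively compact set forces $\a\w_p$-convergence.
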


    \begin{thm}[Topological equivalence]\label{thm:fixedsigma}
        Let $\sigma>0$. Then $\a\w^{(\sigma)}_p$ is a metric on $\sp_p((\R^d)^T)$ that induces the same topology as $\w_p$.
    \end{thm}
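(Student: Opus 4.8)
The plan is to combine the results already established for $\a\w^{(\sigma)}_p$ as a metric (Theorem \ref{thm:smoothawmetric}, which gives the metric axioms under Assumption \ref{assume:noise_FT}) with the two convergence implications of Theorem \ref{thm:slowsigma}, specialized to a \emph{constant} smoothing sequence $\sigma_n \equiv \sigma$. Concretely, I would argue the equivalence of topologies through sequential characterizations, which suffices since both $\w_p$ and $\a\w^{(\sigma)}_p$ are metrizable. Fix $\sigma>0$ and a sequence $(\mu_n)$ and limit $\mu$ in $\sp_p((\R^d)^T)$.

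\textbf{Step 1 (metric axioms).} By Theorem \ref{thm:smoothawmetric}, Assumption \ref{assume:noise_FT} guarantees that $\a\w^{(\sigma)}_p$ is a metric on $\sp_p((\R^d)^T)$; there is nothing further to do here, so it remains only to identify the topology.

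\textbf{Step 2 ($\a\w^{(\sigma)}_p$-convergence implies $\w_p$-convergence).} This is exactly Theorem \ref{thm:slowsigma}\ref{thm:slowsigma_imply} applied to the constant sequence $\sigma_n\equiv\sigma$: since $\sup_n\sigma_n=\sigma<\infty$, if $\a\w^{(\sigma)}_p(\mu_n,\mu)\to 0$ then $\w_p(\mu_n,\mu)\to 0$. (One can also see this directly: $\w_p(\mu_n,\mu)=\w_p(\mu_n^\sigma,\mu^\sigma)\cdot$-comparable via deconvolution, but invoking the theorem is cleaner.)

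\textbf{Step 3 ($\w_p$-convergence implies $\a\w^{(\sigma)}_p$-convergence).} Here I would \emph{not} invoke Theorem \ref{thm:slowsigma}\ref{thm:slowsigma_reverse} verbatim, because that statement additionally requires $\w_1(\mu,\mu_n)/\sigma_n\to 0$, which fails for a genuinely constant $\sigma$ unless $\w_1(\mu,\mu_n)\to 0$ — but in fact $\w_p\to 0$ does imply $\w_1\to 0$ (since $\w_1\le\w_p$), hence $\w_1(\mu,\mu_n)\to 0$, hence $\w_1(\mu,\mu_n)/\sigma\to 0$ as well. So the hypothesis of Theorem \ref{thm:slowsigma}\ref{thm:slowsigma_reverse} \emph{is} met with $\sigma_n\equiv\sigma$, and we conclude $\a\w^{(\sigma)}_p(\mu_n,\mu)\to 0$. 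Combining Steps 2 and 3 shows the two metrics induce the same convergent sequences, hence (by metrizability) the same topology; that $\w^{(\sigma)}_p$ also induces this topology is Proposition \ref{prop:smoothW}, so the full chain of equivalences in the statement follows.

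\textbf{Main obstacle.} The only subtlety is the bookkeeping in Step 3: making sure that the condition $\w_1(\mu,\mu_n)/\sigma_n\to 0$ required by Theorem \ref{thm:slowsigma}\ref{thm:slowsigma_reverse} is automatically satisfied when $\sigma_n$ is constant and $\w_p(\mu_n,\mu)\to 0$. This is immediate from $\w_1\le\w_p$ together with $\sigma_n\equiv\sigma>0$ bounded away from zero, so there is no real difficulty — the theorem is essentially a corollary of the slow-decay regime of Theorem \ref{thm:slowsigma} in which the ``tradeoff'' degenerates because the noise level never shrinks.
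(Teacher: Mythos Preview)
Your proposal is correct and follows essentially the same route as the paper. The only cosmetic difference is in Step 2: the paper derives $\a\w^{(\sigma)}_p\Rightarrow\w_p$ directly from the inequality $\w^{(\sigma)}_p\le C\a\w^{(\sigma)}_p$ together with Proposition~\ref{prop:smoothW}, whereas you invoke Theorem~\ref{thm:slowsigma}\ref{thm:slowsigma_imply} with $\sigma_n\equiv\sigma$ (whose proof in the constant case is precisely that argument); Step 3 is identical to the paper's, including the observation that $\w_1\le\w_p$ makes the ratio hypothesis automatic.
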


In conclusion, adding independent noise $\xi$ fundamentally changes the temporal structure of stochastic processes and makes the smooth adapted Wasserstein topology different from the adapted Wasserstein topology in general. This contrasts with the Wasserstein topology, where the smooth Wasserstein distance generates the Wasserstein topology. In the case where the smoothing parameter is fixed for some $\sigma>0$, the smooth adapted Wasserstein distance completely ignores temporal evolution captured by the natural filtration.

\section{Proofs for Section \ref{sec:background}}\label{sec:pf_background}

In this section we give proofs for the results of Section \ref{sec:background}. Proofs for Section \ref{sec:awtopology} are contained in Section \ref{sec:pf_awtopolgy} and proofs for Section \ref{sec:smoothaw} can be found in Section \ref{sec:pf_smoothaw}.

\subsection{Proofs for Section \ref{sec:awtopology}}\label{sec:pf_awtopolgy}

In this section, we first state and prove Lemma \ref{lem:extmc_mc}, which is essential for proving Proposition \ref{prop:awrelcpt}. We then present proofs of Proposition \ref{prop:awrelcpt}, Proposition \ref{prop:propertyofomega} and Proposition \ref{prop:omega_holder}. We start by establishing some notation that will be used exclusively in the proofs of Lemma \ref{lem:extmc_mc} and Proposition \ref{prop:awrelcpt}.

Let $\mu\in \sp_p((\R^d)^T)$. For $t\in \{1,2,\ldots, T-1\}$ and $x_{1:t}\in (\R^d)^t$, we define $\bar{\mu}_{x_{1:t}}\in \sp_p((\R^d)^{T-t})$ via
\begin{align}
    \bar{\mu}_{x_{1:t}}(dx_{t+1:T})
    =\mu_{x_{1:T-1}}(dx_T)\cdots \mu_{x_{1:t+1}}(dx_{t+2})\mu_{x_{1:t}}(dx_{t+1}).
\end{align}
Equivalently, when $X\sim \mu$ and $A\subseteq (\R^d)^{T-t}$ is measurable,
\begin{align}
    \bar{\mu}_{x_{1:t}}(A)
    =\P(X_{t+1:T}\in A \st X_{1:t}=x_{1:t}).
\end{align}
Using this, we define $\bar{\omega}_{\mu}^{t,p}:(0,\infty)\to [0,\infty)$ via
\begin{align}
    \bar{\omega}_{\mu}^{t,p}(\delta)
    =\sup
    \left\{\left(\E[\w_p(\bar{\mu}_{X_{1:t}}, \bar{\mu}_{Y_{1:t}})^p]\right)^{1/p} \,\big\vert\, X, Y\sim \mu, \left(\E[\abs{X_{1:t}-Y_{1:t}}^p]\right)^{1/p}<\delta\right\}.
\end{align}

\begin{lem}\label{lem:extmc_mc}
    Let $\mu\in \sp_p((\R^d)^T)$. For $\delta>0$ and $t\in \{1,2,\ldots, T-1\}$, we have
    \begin{align}
        \bar{\omega}_{\mu}^{t,p}(\delta)
        \le \sum_{s=t}^{T-1} g_{t,\mu}^{s,p}(\delta)
    \end{align}
    where
    \begin{align}
        g_{t,\mu}^{s,p}(\delta)
        :=\omega_{\mu}^{s,p}\left(\delta+\sum_{\ell=t}^{s-1} g_{t,\mu}^{\ell, p}(\delta)\right) \text{ for } s\in \{t,t+1 \ldots, T-1\}.
    \end{align}
    Here, the sum $\sum_{\ell=t}^{s-1}$ is understood as $0$ when $s=t$.
\end{lem}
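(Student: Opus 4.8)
The statement bounds the ``extended'' modulus of continuity $\bar\omega^{t,p}_\mu$, which measures $\w_p$-distances between the full conditional laws $\bar\mu_{X_{1:t}}$ of the remaining trajectory $X_{t+1:T}$, by a cascade of the ordinary one-step moduli $\omega^{s,p}_\mu$ for $s \ge t$. The natural approach is a backward (or forward) induction on the number of remaining time steps, peeling off one coordinate at a time and using the dynamic programming / tensorization structure of $\a\w_p$ together with the defining recursion of $g^{s,p}_{t,\mu}$. I would set this up as follows.

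\textbf{Step 1: a one-step bound via adapted couplings.} Fix $X, Y \sim \mu$ with $\|X_{1:t} - Y_{1:t}\|_{L^p} < \delta$. The key observation is that $\bar\mu_{x_{1:t}}$ factorizes as the one-step kernel $\mu_{x_{1:t}}(dx_{t+1})$ followed by the remaining conditional law $\bar\mu_{x_{1:t+1}}(dx_{t+2:T})$. By the dynamic programming principle for $\a\w_p$ (as in \cite{backhoff2017causal}), one can build a bicausal coupling between $\bar\mu_{x_{1:t}}$ and $\bar\mu_{y_{1:t}}$ by first optimally coupling the kernels $\mu_{x_{1:t}}$ and $\mu_{y_{1:t}}$ on coordinate $t+1$, then, conditionally on the resulting pair $(x_{t+1}, y_{t+1})$, recursively coupling $\bar\mu_{x_{1:t+1}}$ and $\bar\mu_{y_{1:t+1}}$. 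This yields
\begin{align}
\w_p(\bar\mu_{x_{1:t}}, \bar\mu_{y_{1:t}})^p \le \a\w_p(\bar\mu_{x_{1:t}}, \bar\mu_{y_{1:t}})^p \le \w_p(\mu_{x_{1:t}}, \mu_{y_{1:t}})^p + \E\big[\w_p(\bar\mu_{X_{1:t+1}}, \bar\mu_{Y_{1:t+1}})^p \,\big|\, X_{1:t}=x_{1:t}, Y_{1:t}=y_{1:t}\big]
\end{align}
where the conditional expectation is under the coupling just constructed; taking expectations over $(X_{1:t}, Y_{1:t})$ (suitably coupled) bounds $\E[\w_p(\bar\mu_{X_{1:t}}, \bar\mu_{Y_{1:t}})^p]$ by $\E[\w_p(\mu_{X_{1:t}}, \mu_{Y_{1:t}})^p]$ plus the next-level term.

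\textbf{Step 2: controlling the displacement at level $t+1$.} To invoke $\omega^{s,p}_\mu$ at higher levels I need to track how far apart $X_{1:t+1}$ and $Y_{1:t+1}$ are in $L^p$ under the constructed coupling. Using the optimal coupling of the kernels and the triangle inequality, $\|X_{1:t+1} - Y_{1:t+1}\|_{L^p}^p = \|X_{1:t} - Y_{1:t}\|_{L^p}^p + \E[|X_{t+1}-Y_{t+1}|^p]$, and the last term is precisely $\E[\w_p(\mu_{X_{1:t}}, \mu_{Y_{1:t}})^p] \le \omega^{t,p}_\mu(\delta)^p$. Being slightly careful with the $L^p$ aggregation across coordinates (i.e. working with $p$-th powers and summing, matching the definition of $|\cdot|$ on $(\R^d)^t$), this shows the level-$(t+1)$ displacement is at most $\delta + g^{t,p}_{t,\mu}(\delta)$ in the sense required, which is exactly the argument fed into $\omega^{t+1,p}_\mu$ in the definition of $g^{t+1,p}_{t,\mu}$. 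Here one must only be mildly cautious about whether to aggregate the increments additively (inside one $\omega$-argument) or via $\ell^p$; I would follow whichever convention makes Step 1's recursion telescope cleanly, likely the additive one, absorbing constants as in the paper's other estimates.

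\textbf{Step 3: iterate and assemble.} Iterating Steps 1--2 from level $t$ up to level $T-1$ (at level $T-1$ the ``remaining trajectory'' is a single coordinate, $\bar\mu_{x_{1:T-1}} = \mu_{x_{1:T-1}}$, and $\w_p(\bar\mu_{X_{1:T-1}}, \bar\mu_{Y_{1:T-1}})^p = \w_p(\mu_{X_{1:T-1}}, \mu_{Y_{1:T-1}})^p \le \omega^{T-1,p}_\mu(\cdot)^p$, so the recursion terminates), each level contributes a term $\omega^{s,p}_\mu\big(\delta + \sum_{\ell=t}^{s-1} g^{\ell,p}_{t,\mu}(\delta)\big) = g^{s,p}_{t,\mu}(\delta)$, and the telescoped bound gives $\bar\omega^{t,p}_\mu(\delta) \le \sum_{s=t}^{T-1} g^{s,p}_{t,\mu}(\delta)$. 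Taking the supremum over admissible $X,Y$ at the very end yields the claim; one should check the supremum interacts correctly with the inductive construction, which it does because every inequality above holds uniformly over the coupling choices.

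\textbf{Main obstacle.} The delicate point is Step 1: rigorously justifying the decomposition of $\w_p$ (or rather $\a\w_p$, which dominates it) between the remaining-trajectory laws into a one-step cost plus a conditional remaining cost, i.e. the dynamic programming identity, and then, crucially, verifying in Step 2 that the $L^p$-distance between the extended prefixes $X_{1:t+1}$ and $Y_{1:t+1}$ produced by the \emph{optimal kernel coupling} is controlled by $\delta$ plus the accumulated moduli — so that the next application of $\omega^{t+1,p}_\mu$ is legitimate. Keeping the bookkeeping of these displacements consistent with the precise recursion defining $g^{s,p}_{t,\mu}$, and handling the $p$-th-power aggregation without losing (or silently introducing) constants, is where the real care is needed; everything else is a routine induction.
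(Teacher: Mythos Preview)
Your proposal is correct and follows essentially the same approach as the paper: build a bicausal coupling of $(\mu,\mu)$ by gluing together $\w_p$-optimal one-step couplings $\pi^\star_{x_{1:s},y_{1:s}}\in\cpl(\mu_{x_{1:s}},\mu_{y_{1:s}})$ for $s=t,\dots,T-1$, then recursively bound $\|X_{s+1}-Y_{s+1}\|_{L^p}\le \omega^{s,p}_\mu(\|X_{1:s}-Y_{1:s}\|_{L^p})$ and telescope to obtain $\|X_{s+1}-Y_{s+1}\|_{L^p}\le g^{s,p}_{t,\mu}(\delta)$, finishing by summing over $s$. The paper carries this out by constructing the full coupling $\gamma$ explicitly up front rather than phrasing it via the DPP for $\a\w_p$, so your detour through $\a\w_p$ is unnecessary (a single suboptimal coupling already bounds $\w_p$), and your ``main obstacle'' turns out to be routine bookkeeping with no hidden constants.
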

\begin{proof}
    Let us fix $t\in \{1,2,\ldots, T-1\}$ and $\pi\in \cpl(\mu_{1:t}, \mu_{1:t})$ such that
    \begin{align}
        \left(\int \abs{x_{1:t}-y_{1:t}}^p \pi(dx_{1:t}, dy_{1:t})\right)^{1/p}<\delta.\label{eq:piext}
    \end{align}
    For $s\in \{t, t+1, \ldots, T-1\}$ and $x_{1:s}, y_{1:s}\in (\R^d)^s$, define $\pi^{\star}_{x_{1:s}, y_{1:s}}\in \cpl(\mu_{x_{1:s}}, \mu_{y_{1:s}})$ as a $\w_p$-optimal coupling between $\mu_{x_{1:s}}$ and $\mu_{y_{1:s}}$. Note that $\gamma\in \cpl(\mu, \mu)$ which is defined via
    \begin{align}
        &\gamma(dx_{1:T}, dy_{1:T})\\
        &:=\pi^{\star}_{x_{1:T-1}, y_{1:T-1}}(dx_T, dy_T)\cdots \pi^{\star}_{x_{1:t+1}, y_{1:t+1}}(dx_{t+2}, dy_{t+2})\pi^{\star}_{x_{1:t}, y_{1:t}}(dx_{t+1}, dy_{t+1})\pi(dx_{1:t}, dy_{1:t}).
    \end{align}
    Let $(X, Y)\sim \gamma$. From the construction of $\gamma$, we bound
    \begin{align}
        \left(\E[\abs{X_{s+1}-Y_{s+1}}^p]\right)^{1/p}
        &=\left(\int \abs{x_{s+1}-y_{s+1}}^p \gamma(dx_{1:s+1}, dy_{1:s+1})\right)^{1/p}\\
        &=\left(\int \w_p(\mu_{x_{1:s}}, \mu_{y_{1:s}})^p \gamma(dx_{1:s}, dy_{1:s})\right)^{1/p}\\
        &\le \omega_{\mu}^{s, p}(\left(\E[\abs{X_{1:s}-Y_{1:s}}^p]\right)^{1/p})\\
        &\le\omega_{\mu}^{s,p}\left(\delta+\sum_{\ell=t}^{s-1}\left(\E[\abs{X_{\ell+1}-Y_{\ell+1}}^p]\right)^{1/p}\right).\label{eq:recursiveg}
    \end{align}
    Here the first inequality comes from the definition of $\omega_{\mu}^{s,p}$ and the last inequality is due to the inequality \eqref{eq:piext},
    \begin{align}
        (\E[\abs{X_{1:t}-Y_{1:t}}^p])^{1/p}
        =\left(\int \abs{x_{1:t}-y_{1:t}}^p \pi(dx_{1:t}, dy_{1:t})\right)^{1/p}
        <\delta.
    \end{align}
    Using the recursive formula \eqref{eq:recursiveg}, we obtain that $\left(\E[\abs{X_{s+1}-Y_{s+1}}^p]\right)^{1/p}\le g_{t,\mu}^{s,p}(\delta)$. Observe that
    \begin{align}
        \left(\int \w_p(\bar{\mu}_{x_{1:t}}, \bar{\mu}_{y_{1:t}})^p \pi(dx_{1:t}, dy_{1:t})\right)^{1/p}
        &\le \left(\E[\abs{X_{t+1:T}-Y_{t+1:T}}^p]\right)^{1/p}\\
        &\le \sum_{s=t}^{T-1}\left(\E[\abs{X_{s+1}-Y_{s+1}}^p]\right)^{1/p}
        \le \sum_{s=t}^{T-1}g_{t,\mu}^{s,p}(\delta).\label{eq:extmcbound}
    \end{align}
    Since the above estimate \eqref{eq:extmcbound} holds for all $\pi\in\cpl(\mu_{1:t}, \mu_{1:t})$ satisfying \eqref{eq:piext}, taking the supremum over all such $\pi$ in the left hand side of \eqref{eq:extmcbound} gives the desired estimate.
\end{proof}

    \begin{proof}[Proof of Proposition \ref{prop:awrelcpt}]
        It follows from \cite[Theorem $3.2$]{eder2019compactness} that $K\subseteq \sp_p((\R^d)^T)$ is $\a\w_p$-relatively compact if and only if $K$ is $\w_p$-relatively compact and
        \begin{align}
            \lim_{\delta\to 0} \sup_{\mu\in K} \bar{\omega}_{\mu}^{t,p}(\delta)=0 \text{ for all } t\in \{1,2,\ldots, T-1\}\label{eq:extmodofconti}
        \end{align}
        It is straightforward from the inequality $\w_p(\mu_{x_{1:t}}, \mu_{y_{1:t}}) \le \w_p(\bar{\mu}_{x_{1:t}}, \bar{\mu}_{y_{1:t}})$ for all $x_{1:t}, y_{1:t}\in (\R^d)^t$, that condition \eqref{eq:extmodofconti} implies condition \eqref{eq:modofconti}. The fact that condition \eqref{eq:modofconti} implies condition \eqref{eq:extmodofconti} follows from Lemma \ref{lem:extmc_mc}.
    \end{proof}

    \begin{proof}[Proof of Proposition \ref{prop:propertyofomega}]
    \ref{prop:propertyofomega_inc} The proof is obvious from the definition of $\omega^{t,p}_{\mu}$.
    
    \ref{prop:propertyofomega_const}
        If $k\le 1$, the proof is immediate from the fact that a map $\omega_{\mu}^{t,p}(\cdot)$ is nondecreasing. Now let us assume $k>1$. The proof for the case $k>1$ is a simple re-scaling of \cite[Lemma $2.4$]{eder2019compactness}. For the sake of simplicity, let us write $\x=(\R^d)^t$, $\y=\sp(\R^d)$ and define a map $\L_t:\sp((\R^d)^T)\to \sp(\x\times \y)$ via $\L_t(\nu)=\law(Y_{1:t}, \nu_{Y_{1:t}})$ for $Y\sim \nu$. As a byproduct of \cite[Lemma $2.4$]{eder2019compactness}, we can relax the marginal constraints as follows:
        \begin{align}
            \omega_{\mu}^{t,p}(\delta)
            =\sup_{\gamma\in \per(\L_t(\mu), \delta)} \rho^{\y}(\gamma)
        \end{align}
        where
        \begin{align}
            &\rho^{\x}(\gamma)=\left(\int \abs{x_{1:t}-y_{1:t}}^p \gamma(dx_{1:t}, d\kappa, dy_{1:t}, d\rho)\right)^{1/p},\\
            &\rho^{\y}(\gamma)=\left(\int \w_p(\kappa, \rho)^p \gamma(dx_{1:t}, d\kappa, dy_{1:t}, d\rho)\right)^{1/p},\\
            &\per(\L_t(\mu), \delta)
            =\left\{\gamma\in \sp^{\le}(\x\times\y\times\x\times\y) :\, \rho^{\x}(\gamma)<\delta, P^{(1)}_{\#}\gamma\le \L_t(\mu), P^{(2)}_{\#}\gamma\le \L_t(\mu)\right\}.
        \end{align}
        Here, $\sp^{\le}(\x\times\y\times\x\times\y)$ is the collection of positive measures whose total mass is no greater than $1$ and the maps $P^{(j)}:\x\times\y\times\x\times\y\to \x\times \y$ are defined via $P^{(j)}(x_1, y_1, x_2, y_2):=(x_j, y_j)$ for $j\in \{1,2\}$. Hence if $\gamma\in \per(\L_t(\mu), k\delta)$, then $k^{-p}\gamma\in \per(\L_t(\mu), \delta)$ and $\rho^{\y}(\gamma)=k\rho^{\y}(k^{-p}\gamma)$. This implies that $\omega_{\mu}^{t,p}(k\delta)\le k\omega_{\mu}^{t,p}(\delta)$.
    \end{proof}

    \begin{proof}[Proof of Proposition \ref{prop:omega_holder}]
        Let $\mu\in \sp_{p, \alpha, L}((\R^d)^T)$. Suppose $X, Y\sim \mu$ and $\E[\abs{X_{1:t}-Y_{1:t}}^p]<\delta^{p}$. Applying Jensen's inequality we obtain
        \begin{align}
            \left(\E[\w_p(\mu_{X_{1:t}}, \mu_{Y_{1:t}})^p]\right)^{1/p}
            \le L\left(\E[\abs{X_{1:t}-Y_{1:t}}^{p\alpha}]\right)^{1/p}
            \le L \left(\E[\abs{X_{1:t}-Y_{1:t}}^{p}]\right)^{\alpha/p}
            \le L \delta^{\alpha}.
        \end{align}
        Hence, $\omega^{t,p}_{\mu}(\delta)\le L \delta^{\alpha}$.
    \end{proof}

\subsection{Proofs for Section \ref{sec:smoothaw}}\label{sec:pf_smoothaw}
    \begin{proof}[Proof of Proposition \ref{prop:smoothW} and Theorem \ref{thm:smoothawmetric}]
         The proof is a straightforward generalization of \cite{nietert2021smooth, goldfeld2024limit, goldfeld2024statistical}: the case where the noise $\xi$ is Gaussian is treated in \cite{nietert2021smooth, goldfeld2024limit} and \cite{goldfeld2024statistical} shows the same result when $\xi$ is compactly supported.
         
         The fact that $\w^{(\sigma)}_p$ and $\a\w^{(\sigma)}_p$ satisfy symmetry and the triangle inequality is trivial; for example, see \cite[Lemma 2]{nietert2021smooth}. If $\w^{(\sigma)}_p(\mu, \nu)=0$ or $\a\w^{(\sigma)}_p(\mu, \nu)=0$, then $\mu\ast \xi_{\sigma}=\nu\ast \xi_{\sigma}$. By taking Fourier transforms, we deduce that
         \begin{align}
             \widehat{\mu}(t)\widehat{\xi}(\sigma t)
             =\widehat{\mu\ast \xi_{\sigma}}(t)
             =\widehat{\nu\ast \xi_{\sigma}}(t)=\widehat{\nu}(t)\widehat{\xi}(\sigma t) \text{ for all } t\in (\R^d)^T.
         \end{align}
         Since the set of real zeros of $\widehat{\xi}$ has Lebesgue measure zero, $\widehat{\mu}=\widehat{\nu}$ a.s. In particular, $\widehat{\mu}=\widehat{\nu}$ as tempered distributions. The Fourier inversion formula shows that $\mu=\nu$ as tempered distributions and thus $\mu=\nu$ as measures. This shows that $\w^{(\sigma)}_p$ and $\a\w^{(\sigma)}_p$ are metrics on $\sp_p((\R^d)^T)$. For a proof that $\w^{(\sigma)}_p$ metrizes the $\w_p$-topology we refer to \cite[Lemma 15]{goldfeld2024statistical}. Lastly, for $\sigma_1, \sigma_2>0$, find $(X, Z)\sim \mu\otimes \xi$ and $(Y, W)\sim \nu\otimes \xi$ such that $(X+\sigma_1 Z, Y+\sigma_1 W)$ is an optimal coupling of $\w^{(\sigma_1)}_p(\mu, \nu)$. Then by the triangle inequality,
         \begin{align}
             \w^{(\sigma_2)}_p(\mu, \nu)
             &\le \left(\E[\abs{(X+\sigma_2 Z)-(Y+\sigma_2 W)}^p]\right)^{1/p}\\
             &\le \left(\E[\abs{(X+\sigma_1 Z)-(Y+\sigma_1 W)}^p]\right)^{1/p}
             +\abs{\sigma_1-\sigma_2}\left(\E[\abs{Z-W}^p]\right)^{1/p}\\
             &\le \w^{(\sigma_1)}_p(\mu, \nu)+2\abs{\sigma_1-\sigma_2}(M_p(\xi))^{1/p}.
         \end{align}
         This proves desired estimates.
    \end{proof}

\section{Proofs for Section \ref{sec:main results}}\label{sec:pf_main results}
Proofs for Sections~\ref{sec:aw_tv}, \ref{sec:bandwidth}, \ref{sec:boundingAW} and \ref{sec:topology} are contained in Sections \ref{sec:pf_aw_tv}, \ref{sec:pf_bandwidth}, \ref{sec:pf_boundingAW} and \ref{sec:pf_topology}.

\subsection{Proofs for Section \ref{sec:aw_tv}}\label{sec:pf_aw_tv}
This section is organized as follows. In Lemma \ref{lem:cptapprox}, we show that compactly supported measures are $\a\w_p$-dense via an explicit estimate. Combined with Lemma \ref{lem:tvav}, this proves Proposition \ref{prop:awtv}. The proof of Theorem \ref{thm:awsigmaw1} is based on Lemma \ref{lem:tvsigmaw1}. Following that, we prove Corollary \ref{cor:awsigmaw1_mom} and Corollary \ref{cor:slowrate}.

\begin{lem}[Approximation by compactly supported measures]\label{lem:cptapprox}
    Let $R>0$. There exists a Lipschitz function $\Phi^{(R)}=(\Phi^{(R)}_1, \ldots, \Phi^{(R)}_T):(\R^d)^T\to (\R^d)^T$ such that
    \begin{align}
        \a\w_p(\mu, \Phi^{(R)}_{\#}\mu)
        \le 2T^{2/p}\left(\int_{\{\abs{x}\ge R\}} \abs{x}^p \mu(dx)\right)^{1/p}
    \end{align}
    for any $\mu\in \sp_p((\R^d)^T)$. Moreover, $\abs{\Phi^{(R)}_t}\le R$ for every $t\in \{1,2,\ldots, T\}$.
\end{lem}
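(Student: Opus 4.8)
The natural candidate for $\Phi^{(R)}$ is a coordinatewise radial truncation (soft projection onto the ball of radius $R$): set $\Phi^{(R)}_t(x) := \phi_R(x_t)$ where $\phi_R : \R^d \to \R^d$ is the $1$-Lipschitz map $\phi_R(v) = v$ if $\abs{v}\le R$ and $\phi_R(v) = R v/\abs{v}$ if $\abs{v} > R$ (the nearest-point projection onto the closed Euclidean ball of radius $R$). Since each $\Phi^{(R)}_t$ depends only on the $t$-th coordinate $x_t$, the map $\Phi^{(R)} = (\Phi^{(R)}_1,\dots,\Phi^{(R)}_T)$ is in particular \emph{adapted}: $\Phi^{(R)}_t$ is a function of $x_{1:t}$. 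It is $1$-Lipschitz in each coordinate, hence Lipschitz on $(\R^d)^T$, and $\abs{\Phi^{(R)}_t(x)}\le R$ for all $x$, giving the final assertion of the lemma. It remains to bound $\a\w_p(\mu,\Phi^{(R)}_{\#}\mu)$.

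The key point is that an adapted Monge map induces a bicausal coupling. First I would record (or invoke from the denseness results of \cite{beiglbock2018denseness, beiglbock2022denseness} referenced in the paper, or check directly) that for an adapted map $S$, the coupling $\pi := (\id, S)_{\#}\mu \in \cplbc(\mu, S_{\#}\mu)$: conditional independence holds because $(S_1(X_{1:1}),\dots,S_t(X_{1:t}))$ is a deterministic function of $X_{1:t}$, so it is trivially conditionally independent of $X_{t+1:T}$ given $X_{1:t}$; the other causality direction follows similarly since $X_{1:t}$ is conditionally independent of everything given itself. Using this $\pi$ as a (suboptimal) bicausal coupling between $\mu$ and $\Phi^{(R)}_{\#}\mu$ gives
\begin{align}
\a\w_p(\mu, \Phi^{(R)}_{\#}\mu)^p
\le \sum_{t=1}^T \int_{(\R^d)^T} \abs{x_t - \phi_R(x_t)}^p\, \mu(dx)
= \sum_{t=1}^T \int_{\R^d} \abs{x_t - \phi_R(x_t)}^p\, \mu_t(dx_t).
\end{align}
Since $\abs{v - \phi_R(v)} = (\abs{v}-R)^+ \le \abs{v}\,\ind{\{\abs{v}\ge R\}}$, each summand is bounded by $\int_{\{\abs{x_t}\ge R\}} \abs{x_t}^p \mu_t(dx_t) \le \int_{\{\abs{x}\ge R\}}\abs{x}^p\mu(dx)$, because $\abs{x_t}\le \abs{x}$ and $\{\abs{x_t}\ge R\}\subseteq\{\abs{x}\ge R\}$. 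Summing over $t$ gives $\a\w_p(\mu,\Phi^{(R)}_{\#}\mu)^p \le T\int_{\{\abs{x}\ge R\}}\abs{x}^p\mu(dx)$, hence $\a\w_p(\mu,\Phi^{(R)}_{\#}\mu) \le T^{1/p}(\int_{\{\abs{x}\ge R\}}\abs{x}^p\mu(dx))^{1/p}$, which is even slightly stronger than the claimed constant $2T^{2/p}$.

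The only genuinely non-routine step is the first one: verifying that the pushforward under an adapted map is a bicausal coupling. I expect this to be the main obstacle only in the sense of bookkeeping — one must be careful that the \emph{two} conditional-independence conditions in the definition of $\cplbc$ are both satisfied, and that measurability of the disintegration maps is in order. An alternative, if one prefers to stay self-contained, is to build the coupling recursively through the disintegrations $\mu_{x_{1:t}}$, coupling $X_{t+1}$ with $Y_{t+1} = \phi_R(X_{t+1})$ via the Monge map at each step; this makes the bicausal structure manifest by construction. Either way the cost estimate is unchanged. (The generous constant $2T^{2/p}$ in the statement presumably leaves room for a slightly different choice of $\Phi^{(R)}$ or a cruder intermediate bound elsewhere in the paper; the argument above shows it is comfortably attainable.)
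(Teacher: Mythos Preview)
Your argument has a genuine gap: the Monge coupling $(\id,\Phi^{(R)})_{\#}\mu$ is in general only \emph{causal}, not bicausal. The first conditional-independence condition holds for the reason you give, but your justification of the second one (``$X_{1:t}$ is conditionally independent of everything given itself'') conflates conditioning on $X_{1:t}$ with conditioning on $Y_{1:t}$. Since $\phi_R$ collapses $\{\abs{v}\ge R\}$ onto the sphere, $Y_{1:t}$ does not determine $X_{1:t}$, and residual randomness in $X_{1:t}$ can correlate with $Y_{t+1:T}$ through $\mu$. Concretely, take $d=1$, $T=2$, $R=1$ and $\mu=\tfrac12\delta_{(2,\,1/2)}+\tfrac12\delta_{(3,\,-1/2)}$: then $Y_1=1$ a.s., yet given $Y_1=1$ the pair $(X_1,Y_2)$ equals $(2,1/2)$ or $(3,-1/2)$ with equal probability --- not independent. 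Your recursive alternative does not help: setting $\pi_{x_{1:t},y_{1:t}}=(\id,\phi_R)_{\#}\mu_{x_{1:t}}$ reproduces the same coupling, and its second marginal $(\phi_R)_{\#}\mu_{x_{1:t}}$ is generally \emph{not} $(\Phi^{(R)}_{\#}\mu)_{y_{1:t}}$ when some coordinate has been truncated, so the dynamic-programming characterization of bicausality fails.

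The paper repairs exactly this point: it uses the Monge kernel $(\id,\varphi)_{\#}\mu_{x_{1:t}}$ only on the set where $x_{1:t}=y_{1:t}$ and $\abs{x_\ell}<R$ for all $\ell\le t$ (there $(\Phi^{(R)}_{\#}\mu)_{y_{1:t}}=\varphi_{\#}\mu_{x_{1:t}}$ is automatic), and switches to the product coupling $\mu_{x_{1:t}}\otimes(\Phi^{(R)}_{\#}\mu)_{y_{1:t}}$ once any truncation has occurred. The product piece has the correct marginals by construction, at the price of the cruder bound $\abs{x_t-y_t}^p\le 2^{p-1}(\abs{x_t}^p+\abs{y_t}^p)$ on that set; summing over the index $\ell$ of the first truncated coordinate and over $t$ produces the factor $2^pT^2$ and hence the constant $2T^{2/p}$. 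So the ``generous'' constant is not slack left for convenience --- it is the cost of making the coupling genuinely bicausal.
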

\begin{proof}
    Throughout the proof, we assume that $R>0$ and $\mu\in \sp_p((\R^d)^T)$ are fixed. We define $\Phi:(\R^d)^T\to (\R^d)^T$ via $\Phi(x_{1:T}):=(\varphi(x_1), \ldots, \varphi(x_T))$, where $\varphi:\R^d\to \R^d$ is given by
    \begin{align}
        \varphi(x)
        =\begin{cases}
            x & \text{ if } \abs{x}< R\\
            \frac{Rx}{\abs{x}} & \text{ if } \abs{x}\ge R.
        \end{cases}
    \end{align}
    We want to construct a bicausal coupling between $\mu$ and $\Phi_{\#}\mu$. Set $\pi_{1,1}:=(\id, \varphi)_{\#}\mu_1\in \cpl(\nu_1, (\Phi_{\#}\nu)_1)$. For $t\in \{1,2,\ldots, T-1\}$ and $x_{1:t}, y_{1:t}\in (\R^d)^t$, let us define $\pi_{x_{1:t}, y_{1:t}}\in \sp(\R^d\times \R^d)$ via
    \begin{align}
        \pi_{x_{1:t}, y_{1:t}}
        :=\begin{cases}
            (\id, \varphi)_{\#}\mu_{x_{1:t}} & \text{ if } x_{1:t}=y_{1:t} \text{ and } \abs{x_{\ell}}<R \text{ for all } \ell\in \{1,2,\ldots, t\},\\
            \mu_{x_{1:t}}\otimes (\Phi_{\#}\mu)_{y_{1:t}} & \text{ otherwise.} 
        \end{cases}
    \end{align}
    Since $(\Phi_{\#}\mu)_{y_{1:t}}=\varphi_{\#}\mu_{x_{1:t}}$ on $\{x_{1:t}=y_{1:t} \text{ and } \abs{x_{\ell}}<R \text{ for all } \ell\in \{1,2,\ldots, t\}\}$, we have that $\pi_{x_{1:t}, y_{1:t}}\in \cpl(\mu_{x_{1:t}}, (\Phi_{\#}\mu)_{y_{1:t}})$. Define the measure $\pi\in \sp((\R^d)^T\times (\R^d)^T)$ via 
    \begin{align}
        \pi(dx, dy)
        :=\pi_{x_{1:T-1}, y_{1:T-1}}(dx_T, dy_T)\cdots \pi_{x_{1}, y_{1}}(dx_2, dy_2)\pi_{1,1}(dx_1, dy_1)
    \end{align}
    and conclude that $\pi\in \cplbc(\mu, \Phi_{\#}\mu)$. Thus
    \begin{align}
        \a\w_p(\mu, \Phi_{\#}\mu)^p
        \le \sum_{t=1}^T\int \abs{x_t-y_t}^p \pi(dx_{1:t}, dy_{1:t}).
    \end{align}
    For $t\in \{1, 2, \ldots, T\}$, consider the partition $(E^t_{\ell})_{\ell=0}^t$ of $(\R^d)^t\times (\R^d)^t$ where
    \begin{align}
        &E^t_0:=\{(x_{1:t}, y_{1:t}):\, \abs{x_s}<R \text{ for all } s\in \{1,2, \ldots, t\}\},\\
        &E^t_{\ell}
        :=\{(x_{1:t}, y_{1:t}):\, \abs{x_{\ell}}\ge R \text{ and } \abs{x_s}< R \text{ for all } s\in \{1,2, \ldots, \ell\}\} \text{ when } \ell\in\{1,2,\ldots, t\}.
    \end{align}
    Note that
    \begin{align}
        \int_{E^t_0} \abs{x_t-y_t}^p \pi(dx_{1:t}, dy_{1:t})
        =\int_{\{ \abs{x_t}<R \}} \abs{x_t-\varphi(x_t)}^p \mu(x_{1:t})=0.
    \end{align}
    For $\ell\in \{1,2, \ldots, t\}$, we use Jensen's inequality to compute
    \begin{align}
        \int_{E^t_{\ell}} \abs{x_t-y_t}^p \pi(dx_{1:t}, dy_{1:t})
        &\le 2^{p-1}\int_{E^t_{\ell}} \abs{x_t}^p \mu(dx_{1:t})
        +2^{p-1}\int_{E^t_{\ell}} \abs{y_t}^p \Phi_{\#}\mu(dy_{1:t})\\
        &\le 2^{p}\int_{\{\abs{x_{\ell}}\ge R\}} \abs{x_t}^p \mu(dx_{1:t}).
    \end{align}
    Combining these results,
    \begin{align}
        \a\w_p(\mu, \Phi_{\#}\mu)^p
        &\le \sum_{t=1}^T \sum_{\ell=1}^{t}
        \int_{E^t_{\ell}} \abs{x_t-y_t}^p \pi(dx_{1:t}, dy_{1:t})\\
        &\le \sum_{t=1}^T \sum_{\ell=1}^{t}
        2^{p}\int_{\{\abs{x_{\ell}}\ge R\}} \abs{x_t}^p \mu(dx_{1:t})
        \le 
        2^pT^2\int_{\{\abs{x}\ge R\} } \abs{x}^p \mu(dx).
    \end{align}
\end{proof}

    \begin{lem}\label{lem:tvav}
    For $\mu, \nu\in \sp((\R^d)^T)$ we have
    \begin{align}
        \frac{1}{2}\norm{\mu-\nu}_{\tv}
        \le \av(\mu, \nu)
        :=\inf_{\pi \in \cplbc(\mu, \nu)} \pi(\{x\neq y\})
        \le \frac{2^T-1}{2}\norm{\mu-\nu}_{\tv}.
    \end{align}
\end{lem}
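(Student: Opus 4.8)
This is immediate from the representation recalled in the notation section, $\tfrac12\|\mu-\nu\|_{\tv}=\inf_{\pi\in\cpl(\mu,\nu)}\pi(\{x\neq y\})$: since $\cplbc(\mu,\nu)\subseteq\cpl(\mu,\nu)$, taking the infimum over the smaller class only increases the value, so $\av(\mu,\nu)\ge\tfrac12\|\mu-\nu\|_{\tv}$. All the work lies in the upper bound, and the plan is to exhibit a single bicausal coupling of $\mu$ and $\nu$ that puts little mass on $\{x\neq y\}$. Write $\delta_t:=\tfrac12\|\mu_{1:t}-\nu_{1:t}\|_{\tv}$ and note that $t\mapsto\delta_t$ is nondecreasing, since $\mu_{1:t}=(P_{1:t})_{\#}\mu_{1:t+1}$ and pushforwards are $1$-Lipschitz for total variation. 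I would take the \emph{recursive maximal coupling} $\pi^{\ast}$: let $\pi^{\ast}_{1,1}$ be a maximal coupling of $\mu_1,\nu_1$, and for $t\ge1$ and $(x_{1:t},y_{1:t})$ set $\pi^{\ast}_{x_{1:t},y_{1:t}}$ equal to a maximal coupling of $\mu_{x_{1:t}},\nu_{y_{1:t}}$ when $x_{1:t}=y_{1:t}$ and to the independent coupling $\mu_{x_{1:t}}\otimes\nu_{y_{1:t}}$ otherwise; concatenating these kernels exactly as in the proof of Lemma \ref{lem:cptapprox} produces $\pi^{\ast}\in\cplbc(\mu,\nu)$. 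Two routine points must be handled here: that a maximal coupling can be chosen jointly measurably in $(x_{1:t},y_{1:t})$ (use the explicit formula built from $\mu_{x_{1:t}}\wedge\nu_{x_{1:t}}$), and that the concatenated kernels indeed satisfy the bicausality conditions (each step has marginals $\mu_{x_{1:t}}$ and $\nu_{y_{1:t}}$ and depends only on the past).

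\textbf{Reduction to a disintegration estimate.} For $(X,Y)\sim\pi^{\ast}$ introduce the first disagreement time $\tau:=\min\{t\le T: X_t\ne Y_t\}$ (with $\tau=\infty$ if $X=Y$), so that $\pi^{\ast}(\{x\ne y\})=\sum_{t=1}^T\pi^{\ast}(\tau=t)$. By construction $\pi^{\ast}(\tau=1)=\delta_1$, and on $\{X_{1:t}=Y_{1:t}\}$ the conditional law of $(X_{t+1},Y_{t+1})$ is a maximal coupling of $\mu_{X_{1:t}}$ and $\nu_{X_{1:t}}$, which charges $\{X_{t+1}\ne Y_{t+1}\}$ with mass $\tfrac12\|\mu_{X_{1:t}}-\nu_{X_{1:t}}\|_{\tv}$. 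Letting $\rho_t$ be the sub-probability law of $X_{1:t}$ under $\pi^{\ast}$ restricted to $\{X_{1:t}=Y_{1:t}\}$, which satisfies $\rho_t\le\mu_{1:t}$, this gives
\[
\pi^{\ast}(\tau=t+1)=\int\tfrac12\|\mu_{x_{1:t}}-\nu_{x_{1:t}}\|_{\tv}\,\rho_t(dx_{1:t})\le\int\tfrac12\|\mu_{x_{1:t}}-\nu_{x_{1:t}}\|_{\tv}\,\mu_{1:t}(dx_{1:t}).
\]

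\textbf{The crux, and conclusion.} The heart of the argument is the disintegration estimate
\[
\int\tfrac12\|\mu_{x_{1:t}}-\nu_{x_{1:t}}\|_{\tv}\,\mu_{1:t}(dx_{1:t})\le\delta_{t+1}+\delta_t,
\]
which I would prove by a measurable selection of a sign function: take jointly measurable densities $p,q$ of $\mu_{x_{1:t}},\nu_{x_{1:t}}$ with respect to their sum and set $g(x_{1:t},x_{t+1})=\sgn(p-q)$, so that $\int g(x_{1:t},\cdot)\,d\mu_{x_{1:t}}-\int g(x_{1:t},\cdot)\,d\nu_{x_{1:t}}=\|\mu_{x_{1:t}}-\nu_{x_{1:t}}\|_{\tv}$ for every $x_{1:t}$. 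Writing $u(x_{1:t})=\int g(x_{1:t},\cdot)\,d\mu_{x_{1:t}}$ and $v(x_{1:t})=\int g(x_{1:t},\cdot)\,d\nu_{x_{1:t}}$, one has $\int g\,d\mu_{1:t+1}-\int g\,d\nu_{1:t+1}=\int(u-v)\,d\mu_{1:t}+\int v\,(d\mu_{1:t}-d\nu_{1:t})$; since $|g|,|v|\le1$ the left side is bounded by $\|\mu_{1:t+1}-\nu_{1:t+1}\|_{\tv}$ and the last term by $\|\mu_{1:t}-\nu_{1:t}\|_{\tv}$, while $\int(u-v)\,d\mu_{1:t}=\int\|\mu_{x_{1:t}}-\nu_{x_{1:t}}\|_{\tv}\,\mu_{1:t}(dx_{1:t})$, giving the estimate. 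Combining with the previous display and summing, and using $\delta_t\le\delta_T$,
\[
\av(\mu,\nu)\le\pi^{\ast}(\{x\ne y\})\le\delta_1+\sum_{t=1}^{T-1}(\delta_{t+1}+\delta_t)=2\sum_{t=1}^{T-1}\delta_t+\delta_T\le(2T-1)\,\delta_T.
\]
Since $2T-1\le2^T-1$ for $T\ge1$, this is at most $(2^T-1)\delta_T=\tfrac{2^T-1}{2}\|\mu-\nu\|_{\tv}$, proving the claim (and in fact with the sharper constant $2T-1$). The main obstacle is the disintegration estimate above; the measurability of the maximal-coupling selection and the bicausality of the concatenated kernels are standard but need to be spelled out.
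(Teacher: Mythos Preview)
Your proof is correct. The paper does not actually prove this lemma; it simply cites \cite[Lemma 3.5]{eckstein2024computational}. You instead give a direct, self-contained argument via a recursive maximal coupling and the disintegration estimate $\int\tfrac12\|\mu_{x_{1:t}}-\nu_{x_{1:t}}\|_{\tv}\,\mu_{1:t}(dx_{1:t})\le\delta_{t+1}+\delta_t$, which you obtain cleanly from the decomposition $\int g\,d\mu_{1:t+1}-\int g\,d\nu_{1:t+1}=\int(u-v)\,d\mu_{1:t}+\int v\,(d\mu_{1:t}-d\nu_{1:t})$. This yields the sharper linear constant $2T-1$ in place of the exponential $2^T-1$ stated in the lemma (the two agree for $T\in\{1,2\}$ and yours is strictly better afterwards), so your route buys an improvement, not just an alternative. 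The measurability points you flag---jointly measurable choice of the maximal coupling via the explicit $\mu_{x_{1:t}}\wedge\nu_{x_{1:t}}$ construction, and a jointly measurable sign function $g$---are indeed standard and suffice; note also that $\rho_t\le\nu_{1:t}$ as well as $\rho_t\le\mu_{1:t}$, so the kernels $\nu_{x_{1:t}}$ are well-defined $\rho_t$-a.e.\ regardless of the version chosen, and the disintegration identity $\int g\,d\nu_{1:t+1}=\int v\,d\nu_{1:t}$ only uses $\nu_{1:t}$-integration, so the version ambiguity on $\nu_{1:t}$-null sets does not affect your estimate. One small remark: the formula in the notation section that you invoke for the lower bound is stated there with a $\sup$ rather than an $\inf$ over couplings, which is a typo in the paper; you use the correct $\inf$ version.
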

\begin{proof}
    See \cite[Lemma 3.5]{eckstein2024computational}.
\end{proof}

    \begin{proof}[Proof of Proposition \ref{prop:awtv}]
        Fix $R>0$ and define $\Phi^{(R)}$ as in Lemma \ref{lem:cptapprox}. Observe that
        \begin{align}
            \a\w_p(\Phi^{(R)}_{\#}\mu, \Phi^{(R)}_{\#}\nu)^p
            &\le(2R)^p\inf_{\pi \in \cplbc(\Phi^{(R)}_{\#}\mu, \Phi^{(R)}_{\#}\nu)}\int \sum_{t=1}^T \ind{\{x_t\neq y_t\}} \pi(dx_{1:T}, dy_{1:T})\\
            &\le (2R)^p T \inf_{\pi \in \cplbc(\Phi^{(R)}_{\#}\mu, \Phi^{(R)}_{\#}\nu)}\int \ind{\{x_{1:T}\neq y_{1:T}\}} \pi(dx_{1:T}, dy_{1:T})\\
            &= (2R)^p T \av(\Phi^{(R)}_{\#}\mu, \Phi^{(R)}_{\#}\nu)\\
            &\le (2R)^p T((2^T-1)/2) \big\|\Phi^{(R)}_{\#}\mu-\Phi^{(R)}_{\#}\nu\big\|_{\tv}\\
            &\le (2R)^p T((2^T-1)/2) \norm{\mu-\nu}_{\tv}.\label{eq:cptawtv}
        \end{align}
        The first inequality stems from the inclusion $\{x_t\neq y_t\}\subseteq \{x_{1:T}\neq y_{1:T}\}$. Lemma \ref{lem:tvav} gives the second inequality. The last inequality is straightforward from the definition of $\norm{\cdot}_{\tv}$. Indeed,
        \begin{align}
            \frac{1}{2}\big\|\Phi^{(R)}_{\#}\mu-\Phi^{(R)}_{\#}\nu\|_{\tv}
            &=\sup_{A\subseteq (\R^d)^T} \mu((\Phi^{(R)})^{-1}(A))-\nu((\Phi^{(R)})^{-1}(A))\\
            &\le \sup_{B\subseteq (\R^d)^T} \mu(B)-\nu(B)
            =\frac{1}{2}\norm{\mu-\nu}_{\tv}.
        \end{align}
        Using the triangle inequality together with the estimate \eqref{eq:cptawtv} and Lemma \ref{lem:cptapprox},
        \begin{align}
            \a\w_p(\mu, \nu)^p
            &\le 3^{p-1}\left(\a\w_p(\mu, \Phi^{(R)}_{\#}\mu)^p+\a\w_p(\nu, \Phi^{(R)}_{\#}\nu)^p+\a\w_p(\Phi^{(R)}_{\#}\mu, \Phi^{(R)}_{\#}\nu)^p\right)\\
            &\le 3^{p-1}2^p T^2 \int_{\{\abs{x}\ge R\}}\abs{x}^p (\mu+\nu)(dx)
            +3^{p-1}(2R)^p T((2^T-1)/2) \norm{\mu-\nu}_{\tv}\\
            &\le (3^{p-1}2^p T^2)\vee (3^{p-1}2^p T((2^T-1)/2)) \left(\int_{\{\abs{x}\ge R\}}\abs{x}^p (\mu+\nu)(dx)
            +R^p\norm{\mu-\nu}_{\tv}\right)\\
            &\le 6^{p-1}T2^T\left(\int_{\{\abs{x}\ge R\}}\abs{x}^p (\mu+\nu)(dx)+R^p\norm{\mu-\nu}_{\tv}\right).
        \end{align}
        This proves the desired estimate.
    \end{proof}

    \begin{lem}\label{lem:tvsigmaw1}
        For $\mu, \nu\in \sp_p((\R^d)^T)$ we have
        \begin{align}
            \norm{\mu^{\sigma}-\nu^{\sigma}}_{\tv}
            \le \norm{\nab f}_{L^1}\frac{\w_1(\mu, \nu)}{\sigma}.
        \end{align}
    \end{lem}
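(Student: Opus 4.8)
The plan is to pass to Lebesgue densities, couple $\mu$ and $\nu$ optimally for $\w_1$, and then combine the classical $L^1$-modulus-of-continuity bound for $W^{1,1}$ functions with the scaling identity for $f_\sigma$. First I would record that, since $\xi_\sigma$ has density $f_\sigma$, the measures $\mu^\sigma=\mu\ast\xi_\sigma$ and $\nu^\sigma=\nu\ast\xi_\sigma$ are absolutely continuous with densities $x\mapsto \mu\ast f_\sigma(x)=\int f_\sigma(x-y)\,\mu(dy)$ and $x\mapsto\nu\ast f_\sigma(x)=\int f_\sigma(x-z)\,\nu(dz)$, so that $\norm{\mu^\sigma-\nu^\sigma}_{\tv}=\int_{(\R^d)^T}\abs{\mu\ast f_\sigma(x)-\nu\ast f_\sigma(x)}\,dx$. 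Since $\mu,\nu\in\sp_p((\R^d)^T)\subseteq\sp_1((\R^d)^T)$, I would pick an optimal coupling $\pi\in\cpl(\mu,\nu)$ with $\int\abs{y-z}\,\pi(dy,dz)=\w_1(\mu,\nu)$, write $\mu\ast f_\sigma(x)-\nu\ast f_\sigma(x)=\int\left(f_\sigma(x-y)-f_\sigma(x-z)\right)\pi(dy,dz)$, and apply the triangle inequality together with Tonelli's theorem (legitimate since the integrand is nonnegative) to get
\begin{align}
\norm{\mu^\sigma-\nu^\sigma}_{\tv}
\le\int\left(\int_{(\R^d)^T}\abs{f_\sigma(x-y)-f_\sigma(x-z)}\,dx\right)\pi(dy,dz).
\end{align}

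The core estimate is the inner integral. For fixed $y,z$ set $h=z-y$, so $x-y=(x-z)+h$. Since $f$, and hence $f_\sigma$, is continuously differentiable (Assumption~\ref{assume:noise}), the fundamental theorem of calculus applied to $s\mapsto f_\sigma(x-z+sh)$ gives $f_\sigma(x-y)-f_\sigma(x-z)=\int_0^1\nab f_\sigma(x-z+sh)\cdot h\,ds$, whence $\abs{f_\sigma(x-y)-f_\sigma(x-z)}\le\abs{h}\int_0^1\abs{\nab f_\sigma(x-z+sh)}\,ds$. Integrating in $x$, using translation invariance of Lebesgue measure and Tonelli once more, yields $\int_{(\R^d)^T}\abs{f_\sigma(x-y)-f_\sigma(x-z)}\,dx\le\abs{h}\,\norm{\nab f_\sigma}_{L^1}=\abs{y-z}\,\norm{\nab f_\sigma}_{L^1}$. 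The identity $\nab f_\sigma(x)=\sigma^{-dT-1}(\nab f)(x/\sigma)$ and the substitution $u=x/\sigma$ then give $\norm{\nab f_\sigma}_{L^1}=\sigma^{-1}\norm{\nab f}_{L^1}$. Substituting back into the display and using optimality of $\pi$ produces $\norm{\mu^\sigma-\nu^\sigma}_{\tv}\le\sigma^{-1}\norm{\nab f}_{L^1}\int\abs{y-z}\,\pi(dy,dz)=\norm{\nab f}_{L^1}\w_1(\mu,\nu)/\sigma$, which is the claim.

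I do not expect a genuine obstacle here: the only points needing (routine) care are the applications of Tonelli's theorem, which are immediate by nonnegativity, and the hypothesis $f\in C^1$ with $\nab f\in L^1$, which makes both the line-segment fundamental theorem of calculus and the finiteness of $\norm{\nab f_\sigma}_{L^1}$ legitimate. The one conceptual point worth highlighting is that smoothing converts a $\w_1$ bound into a TV bound through the $W^{1,1}$ $L^1$-continuity inequality, with the $1/\sigma$ factor arising entirely from the scaling of $\nab f_\sigma$.
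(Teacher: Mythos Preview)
Your proof is correct, but it proceeds differently from the paper's. The paper works on the dual side: it writes $\norm{\mu^\sigma-\nu^\sigma}_{\tv}$ as a supremum over bounded test functions $\varphi$, reduces to compactly supported $\varphi$, shows that $y\mapsto\int\varphi(x)f_\sigma(x-y)\,dx$ is Lipschitz with constant $\norm{\nab f}_{L^1}/\sigma$ (differentiating under the integral by dominated convergence), and then invokes Kantorovich--Rubinstein duality for $\w_1$. You instead work on the primal side: you pass to densities, insert an optimal $\w_1$-coupling directly, and control the inner integral by the $L^1$ translation estimate $\norm{f_\sigma(\cdot-y)-f_\sigma(\cdot-z)}_{L^1}\le\abs{y-z}\,\norm{\nab f_\sigma}_{L^1}$, which you derive from the fundamental theorem of calculus along segments. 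Both routes hinge on the same scaling identity $\norm{\nab f_\sigma}_{L^1}=\sigma^{-1}\norm{\nab f}_{L^1}$ and the $C^1$, $\nab f\in L^1$ hypotheses. Your approach is arguably more direct, avoiding both the truncation to compactly supported test functions and the appeal to duality; the paper's approach makes the role of the Lipschitz constant slightly more transparent and sidesteps the pointwise FTC computation.
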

    \begin{proof}
        We first claim that
        \begin{equation}\label{eq:claim_tv}
            \begin{aligned}
            \norm{\mu^{\sigma}-\nu^{\sigma}}_{\tv}
            &=\sup\left\{\int \varphi d\mu^{\sigma}-\int \varphi d\nu^{\sigma} :\, \abs{\varphi}\le 1\right\}\\
            &=\sup\left\{\int \varphi d\mu^{\sigma}-\int \varphi d\nu^{\sigma} :\,  \abs{\varphi}\le 1, \diam(\supp(\varphi))<\infty\right\}.
        \end{aligned}
        \end{equation}
        The first equality is classical, see for example \cite[Chapter $3$]{folland1999real}. To prove the second equality, let us fix $\ep>0$ and choose $R>0$ such that $(\mu^{\sigma}+\nu^{\sigma})(\{x\in (\R^d)^T:\, \abs{x}>R\})\le \ep$. Take any function $\varphi:(\R^d)^T\to \R$ such that $\abs{\varphi}\le 1$, set $\varphi_{R}(x):=\varphi(x)\ind{\{\abs{x}\le R\}}$ and observe that
        \begin{align}
            \int \varphi d\mu^{\sigma}-\int \varphi d\nu^{\sigma}
            &=\left(\int \varphi_R d\mu^{\sigma}-\int \varphi_R d\nu^{\sigma}\right)
            +\left(\int [\varphi-\varphi_R] d\mu^{\sigma}-\int [\varphi-\varphi_R] d\nu^{\sigma}\right)\\
            &\le \left(\int \varphi_R d\mu^{\sigma}-\int \varphi_R d\nu^{\sigma}\right)
            +\ep.
        \end{align}
        Since $\varphi$ is arbitrary, it shows the claim.

        Now, let $\varphi:(\R^d)^T\to \R$ such that $\abs{\varphi}\le 1$ and $\diam(\supp(\varphi))<\infty$. Note that
        \begin{align}
            \int \varphi d\mu^{\sigma}-\int \varphi d\nu^{\sigma}
            =\int \left(\int \varphi(x)f_{\sigma}(x-y)dx\right)\mu(dy)-\int \left(\int \varphi(x)f_{\sigma}(x-y)dx\right)\nu(dy).
        \end{align}
        Since $\varphi$ has bounded support and $f$ is continuously differentiable, the dominated convergence theorem implies that
        \begin{align}
            \nab_{y}\left(\int \varphi(x)f_{\sigma}(x-y)dx\right)
            =-\int \varphi(x) (\nab f_{\sigma})(x-y)dx.
        \end{align}
        In particular
        \begin{align}
            \abs{\nab_{y}\left(\int \varphi(x)f_{\sigma}(x-y)dx\right)}
            \le \int \abs{\nab f_{\sigma}(x)}dx
            =\frac{1}{\sigma}\int \abs{\nab f(x)}dx.
        \end{align}
        Thus, the function $y\mapsto \int \varphi(x)f_{\sigma}(x-y)dx$ is Lipschitz continuous with the Lispchitz constant $\norm{\nab f}_{L^1}/\sigma$. The desired estimate follows from the Kantorovich--Rubinstein duality for $\w_1(\mu, \nu)$ and \eqref{eq:claim_tv}.
    \end{proof}

    \begin{proof}[Proof of Theorem \ref{thm:awsigmaw1}]
    Combining Proposition \ref{prop:awtv} and Lemma \ref{lem:tvsigmaw1},
        \begin{align}
            \a\w_p(\mu^{\sigma}, \nu^{\sigma})^p
            &\le 6^{p-1}T2^T \left(R^p \norm{\mu^{\sigma}-\nu^{\sigma}}_{\tv}
            +\int_{\{\abs{x}\ge R\}} \abs{x}^p (\mu^{\sigma}+\nu^{\sigma})(dx)\right)\\
            &\le 6^{p-1}T2^T \left(R^p \norm{\nab f}_{L^1}\frac{\w_1(\mu, \nu)}{\sigma}
            +\int_{\{\abs{x}\ge R\}} \abs{x}^p (\mu^{\sigma}+\nu^{\sigma})(dx)\right).
        \end{align}
    \end{proof}

    \begin{proof}[Proof of Corollary \ref{cor:awsigmaw1_mom}]
        \ref{cor:awsigmaw1_mom_q} For the sake of simplicity, let us write $A_{R}:=\{x\in (\R^d)^T :\, \abs{x}\ge R\}$. Using the Markov inequality, we bound
        \begin{align}
            \int_{\{\abs{x}\ge R\}}\abs{x}^p (\mu^{\sigma}+\nu^{\sigma})(dx)
            &=\int \abs{x}^p \ind{A_{R}}(x)(\mu^{\sigma}+\nu^{\sigma})(dx)\\
            &\le \int \abs{x}^p \frac{\abs{x}^{q-p}}{R^{q-p}}(\mu^{\sigma}+\nu^{\sigma})(dx)
            =\frac{M_q(\mu^{\sigma}+\nu^{\sigma})}{R^{q-p}}.
        \end{align}
        Plugging this into Theorem \ref{thm:awsigmaw1},
        \begin{align}
            \a\w^{(\sigma)}_p(\mu, \nu)^p
            \le 6^{p-1}T2^T \left(R^p\norm{\nab f}_{L^1}\frac{\w_1(\mu, \nu)}{\sigma}
            +\frac{M_q(\mu^{\sigma}+\nu^{\sigma})}{R^{q-p}}\right).
        \end{align}
        The desired estimate \eqref{eq:awsigmaw1_mom_q} follows from optimizing the right-hand side by choosing
        \begin{align}
            R=\left(\frac{\sigma M_q(\mu^{\sigma}+\nu^{\sigma})}{\norm{\nab f}_{L^1}\w_1(\mu, \nu)}\right)^{1/q}.
        \end{align}

        \ref{cor:awsigmaw1_mom_cpt} Without loss of generality, we may assume that $0\in (\supp(\mu)\cup\supp(\nu))\cap\supp(\xi)$. The proof for the general case follows from translations. Applying the bound
        \begin{align}
            (M_q(\mu^{\sigma}+\nu^{\sigma}))^{p/q}
            &\le (M_q(\mu^{\sigma}))^{p/q}
            +(M_q(\nu^{\sigma}))^{p/q}\\
            &\le ((M_q(\mu))^{1/q}+\sigma (M_q(\xi))^{1/q})^p
            +((M_q(\nu))^{1/q}+\sigma (M_q(\xi))^{1/q})^p\\
            &\le 2(\diam(\supp(\mu)\cup\supp(\nu))+\sigma\diam(\supp(\xi)))^p
        \end{align}
        and sending the limit $q\to \infty$ in \eqref{eq:awsigmaw1_mom_q}, we obtain
        \begin{align}
            \a\w^{(\sigma)}_p(\mu, \nu)^p
            &\le 6^{p-1}T2^{T+1}\liminf_{q\to \infty}(M_q(\mu^{\sigma}+\nu^{\sigma}))^{p/q}\left(\norm{\nab f}_{L^1}\frac{\w_1(\mu, \nu)}{\sigma}\right)^{1-p/q}\\
            &\le 6^{p-1}T2^{T+2}(\diam(\supp(\mu)\cup\supp(\nu))+\sigma\diam(\supp(\xi)))^p\left(\norm{\nab f}_{L^1}\frac{\w_1(\mu, \nu)}{\sigma}\right).
        \end{align}

        \ref{cor:awsigmaw1_mom_gaussian} If $\xi$ is a Gaussian measure we have $\xi_{\sigma}=(\xi_{\sigma_0})\ast (\xi_{\sqrt{\sigma^2-\sigma^2_0}})$. Since $q>p$, we apply Corollary \ref{cor:awsigmaw1_mom}\ref{cor:awsigmaw1_mom_q} to deduce that
        \begin{align}
            \a\w^{(\sigma)}_p(\mu, \nu)^p
            &=\a\w^{(\sqrt{\sigma^2-\sigma^2_0})}_p(\mu^{\sigma_0}, \nu^{\sigma_0})^p\\
            &\le 6^{p-1}T2^{T+1}(M_q(\mu^{\sigma}+\nu^{\sigma}))^{p/q}\left(\norm{\nab f}_{L^1}\frac{\w^{(\sigma_0)}_1(\mu, \nu)}{\sqrt{\sigma^2-\sigma^2_0}}\right)^{1-p/q}.
        \end{align}
    \end{proof}

    \begin{proof}[Proof of Corollary \ref{cor:slowrate}]
        Choosing $\sigma_0=\sigma/\sqrt{2}$ in Corollary \ref{cor:awsigmaw1_mom}\ref{cor:awsigmaw1_mom_gaussian},
        \begin{align}
            \a\w^{(\sigma)}_p(\mu, \boldsymbol{\mu_n})^p
            \le 6^{p-1}T2^{T+1}\norm{\nab f}^{1-p/q}_{L^1}(M_q(\mu^{\sigma}+\boldsymbol{\mu_n}^{\sigma}))^{1/r}\left(\frac{\w^{(\sigma/\sqrt{2})}_1(\mu, \boldsymbol{\mu_n})}{\sigma/\sqrt{2}}\right)^{1/r'}
        \end{align}
        where $r=q/p>1$ and $r'$ is the H\"older conjugate of $r$. Applying H\"older's inequality with $1/r+1/r'=1$, 
        \begin{align}
            \E[\a\w^{(\sigma)}_p(\mu, \boldsymbol{\mu_n})^p]
            &\le C(M_q(\mu^{\sigma})+\E[M_q(\boldsymbol{\mu_n}^{\sigma})])^{1/r}\left(\frac{\E[\w^{(\sigma/\sqrt{2})}_1(\mu, \boldsymbol{\mu_n})]}{\sigma/\sqrt{2}}\right)^{1/r'}\\
            &\le C(M_q(\mu)+\sigma^q M_q(\xi))^{p/q}\left(\frac{\E[\w^{(\sigma/\sqrt{2})}_1(\mu, \boldsymbol{\mu_n})]}{\sigma}\right)^{1/r'}\\
            &\le C(1+\sigma^p)\left(\frac{\E[\w^{(\sigma/\sqrt{2})}_1(\mu, \boldsymbol{\mu_n})]}{\sigma}\right)^{1/r'}
        \end{align}
        for some constant $C>0$ that depends only on $d, T, p, q, M_q(\mu)$. A close inspection of the proof of \cite[Theorem 3.1]{hou2024convergence} shows that if $q>dT+2$, then for $X\sim \mu$,
        \begin{align}
            \E[\w_1^{(\sigma/\sqrt{2})}(\mu, \boldsymbol{\mu_n})]
            &\le \int_{(\R^d)^T} \abs{x}\E\left[\abs{f_{\sigma/\sqrt{2}}\ast \mu(x)-f_{\sigma/\sqrt{2}}\ast \boldsymbol{\mu_n}(x)}\right]dx\\
            &\le \left(\int_{(\R^d)^T}\frac{\abs{x}^2}{1+\abs{x}^{q}} dx\right)^{1/2}
            \left(\int_{(\R^d)^T}(1+\abs{x}^q)\frac{\E[f^2_{\sigma/\sqrt{2}}(x-X)]}{n} dx\right)^{1/2}\\
            &\le C\left(\frac{1+\sigma^{q/2}}{\sigma^{dT/2}}\right)\frac{1}{\sqrt{n}}.
        \end{align}
        for some $C>0$ that depends only on $d, T, q, M_q(\mu)$. We refer to the proof of \cite[Theorem 3.1]{hou2024convergence} for computational details. This proves the desired result.
    \end{proof}

\subsection{Proofs for Section \ref{sec:bandwidth}}\label{sec:pf_bandwidth}

In Lemma \ref{lem:epcoupling}, we show that there is a Monge mapping from $\mu^{\sigma}$ to $\mu$ whose kernels are almost $\w_p$-optimal. We use this to prove Theorem \ref{thm:muandmusigma}. We then present proofs of Corollaries \ref{cor:uniform_moc} and \ref{cor:empiricalvaryingsigma}.

    \begin{lem}\label{lem:epcoupling}
        Let $\mu\in \sp_p((\R^d)^T)$ and $\sigma>0$. For each $\ep>0$ there exists a bicausal coupling $\pi^{\ep}\in \cplbc(\mu^{\sigma}, \mu)$ given by $\pi^{\ep}=(\id, G^{\ep})_{\#}\mu^{\sigma}$, where $G^{\ep}=(G^{\ep}_1, \ldots, G^{\ep}_T):(\R^d)^T\to (\R^d)^T$ is a Borel measurable function, and $\pi^{\ep}$ satisfies
        \begin{align}
            \left(\int \abs{x_1-y_1}^p \pi^{\ep}(dx_1, dy_1)\right)^{1/p}
            \le (1+\ep)\w_p((\mu^{\sigma})_1, \mu_1)
        \end{align}
        as well as
        \begin{align}
            \left(\int \abs{x_{t+1}-y_{t+1}}^p \pi^{\ep}_{x_{1:t},y_{1:t}}(dx_{t+1}, dy_{t+1})\right)^{1/p}
            \le (1+\ep)\w_p((\mu^{\sigma})_{x_{1:t}}, \mu_{y_{1:t}})
        \end{align}
        for all $t\in \{1,2, \ldots, T-1\}$ and $x_{1:t}, y_{1:t}\in (\R^d)^t$. Moreover, $G^{\ep}_t$ depends only on the first $t$ coordinates, i.e. $G^{\ep}_t(x_{1:T})=G^{\ep}_t(x_{1:t})$ for $t\in\{1,2, \ldots, T-1\}$.
    \end{lem}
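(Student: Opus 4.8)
The plan is to construct the adapted Monge map $G^\ep=(G^\ep_1,\ldots,G^\ep_T)$ one coordinate at a time, at each step invoking the measurable denseness of Monge maps among couplings that is established in Appendix~\ref{appendix:monge}. The structural observation that makes this work is that $\mu^\sigma=\mu\ast\xi_\sigma$ is absolutely continuous on $(\R^d)^T$ (because $\xi_\sigma$ has the density $f_\sigma$ by Assumption~\ref{assume:noise_f}); consequently each marginal $(\mu^\sigma)_{1:t}$ is absolutely continuous on $(\R^d)^t$, and the disintegration kernels $(\mu^\sigma)_{x_{1:t}}$ are absolutely continuous --- in particular atomless --- for $(\mu^\sigma)_{1:t}$-a.e.\ $x_{1:t}$. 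It is this atomlessness of the source measures that allows couplings to be realized by genuine (non-randomized) transport maps.

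First I would handle $t=1$: since $(\mu^\sigma)_1$ is atomless, Appendix~\ref{appendix:monge} yields a Borel map $G^\ep_1$ with $(G^\ep_1)_\#(\mu^\sigma)_1=\mu_1$ and transport cost at most $(1+\ep)\w_p((\mu^\sigma)_1,\mu_1)$. I then proceed by induction on $t$: supposing $G^\ep_1,\ldots,G^\ep_t$ have been built with $G^\ep_s$ a function of $x_{1:s}$ only and $\hat G^\ep_t:=(G^\ep_1,\ldots,G^\ep_t)$ satisfying $(\hat G^\ep_t)_\#(\mu^\sigma)_{1:t}=\mu_{1:t}$, I apply the measurable form of the denseness result fiberwise: for $(\mu^\sigma)_{1:t}$-a.e.\ $x_{1:t}$ the kernel $(\mu^\sigma)_{x_{1:t}}$ is atomless, so there is a jointly Borel map $(x_{1:t},x_{t+1})\mapsto T^\ep_{x_{1:t}}(x_{t+1})$ with $(T^\ep_{x_{1:t}})_\#(\mu^\sigma)_{x_{1:t}}=\mu_{\hat G^\ep_t(x_{1:t})}$ and with cost at most $(1+\ep)\w_p((\mu^\sigma)_{x_{1:t}},\mu_{\hat G^\ep_t(x_{1:t})})$ for a.e.\ $x_{1:t}$ (put $T^\ep_{x_{1:t}}:=\id$ on the exceptional null set). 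Setting $G^\ep_{t+1}(x_{1:t+1}):=T^\ep_{x_{1:t}}(x_{t+1})$ and disintegrating $(\mu^\sigma)_{1:t+1}(dx_{1:t+1})=(\mu^\sigma)_{x_{1:t}}(dx_{t+1})\,(\mu^\sigma)_{1:t}(dx_{1:t})$ before pushing through $\hat G^\ep_{t+1}$, the induction hypothesis gives $(\hat G^\ep_{t+1})_\#(\mu^\sigma)_{1:t+1}(dy_{1:t+1})=\mu_{y_{1:t}}(dy_{t+1})\,\mu_{1:t}(dy_{1:t})=\mu_{1:t+1}(dy_{1:t+1})$, closing the induction; at $t+1=T$ this reads $(G^\ep)_\#\mu^\sigma=\mu$, so $\pi^\ep:=(\id,G^\ep)_\#\mu^\sigma\in\cpl(\mu^\sigma,\mu)$.

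Finally I would verify bicausality and read off the two cost bounds. The causal ($X\to Y$) constraint is automatic because $Y_{1:t}=\hat G^\ep_t(X_{1:t})$ is a deterministic function of $X_{1:t}$, hence constant given $X_{1:t}$. For the anticausal ($Y\to X$) constraint: for each $s\in\{t,\ldots,T-1\}$ the conditional law of $Y_{s+1}$ given $(X_{1:t},Y_{1:s})$ equals $\mu_{Y_{1:s}}$ --- indeed, conditionally on $X_{1:s}$ it is $(T^\ep_{X_{1:s}})_\#(\mu^\sigma)_{X_{1:s}}=\mu_{\hat G^\ep_s(X_{1:s})}=\mu_{Y_{1:s}}$, which depends on $X_{1:s}$ only through $Y_{1:s}$, so averaging over $X_{t+1:s}$ leaves it unchanged --- and iterating over $s$ exhibits the conditional law of $(Y_{t+1},\ldots,Y_T)$ given $(X_{1:t},Y_{1:t})$ as that of the Markov chain with transition kernels $y_{1:s}\mapsto\mu_{y_{1:s}}$ started from $Y_{1:t}$, hence a function of $Y_{1:t}$ alone; this is precisely conditional independence of $X_{1:t}$ and $Y_{t+1:T}$ given $Y_{1:t}$, so $\pi^\ep\in\cplbc(\mu^\sigma,\mu)$. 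Since $\pi^\ep$ is carried by the graph $\{y_{1:t}=\hat G^\ep_t(x_{1:t})\}$, its disintegrations are $\pi^\ep_{x_{1:t},y_{1:t}}=(\id,T^\ep_{x_{1:t}})_\#(\mu^\sigma)_{x_{1:t}}$, and the displayed inequalities for $t=0$ and $t\ge1$ are exactly the cost bounds already built into $G^\ep_1$ and into the $T^\ep_{x_{1:t}}$. I expect the only real difficulty to be in the recursive step --- namely securing the \emph{joint} Borel measurability in $x_{1:t}$ of the near-optimal Monge maps between the kernels $(\mu^\sigma)_{x_{1:t}}$ and $\mu_{\hat G^\ep_t(x_{1:t})}$ --- which is precisely the reason the denseness statement is proved separately in Appendix~\ref{appendix:monge}; the remaining verifications are routine manipulations of disintegrations.
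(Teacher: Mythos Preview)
Your approach is essentially the paper's: build $G^\ep$ one coordinate at a time, using the denseness of Monge maps from Appendix~\ref{appendix:monge} at each step and the atomlessness of the smoothed kernels. The one substantive difference is the choice of parameter space when invoking Lemma~\ref{lem:monge}: you parametrize the near-optimal Monge map only by $x_{1:t}$ (with target kernel $\mu_{\hat G^\ep_t(x_{1:t})}$), whereas the paper takes $\mathcal Z=(\R^d)^t\times(\R^d)^t$ and builds $F^\ep_{t+1}(x_{1:t},y_{1:t},\cdot)$ for \emph{all} pairs $(x_{1:t},y_{1:t})$, only afterwards substituting $y_{1:t}=G^\ep(x_{1:t})$. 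This matters because the lemma asserts the conditional-cost bound ``for all $x_{1:t},y_{1:t}\in(\R^d)^t$'', and your construction only produces a version of $\pi^\ep_{x_{1:t},y_{1:t}}$ on the graph $\{y_{1:t}=\hat G^\ep_t(x_{1:t})\}$; to match the statement literally you would have to extend the disintegration off the graph (e.g.\ by an optimal coupling), which is harmless but should be said. Conversely, the paper does not spell out the bicausality verification---it simply uses the standard fact that a coupling assembled from successive kernels $\pi^\ep_{x_{1:t},y_{1:t}}\in\cpl((\mu^\sigma)_{x_{1:t}},\mu_{y_{1:t}})$ is automatically bicausal---so your explicit anticausal argument is extra work you could skip.
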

    \begin{proof}
        Throughout the proof, $\ep>0$ is fixed. Since $(\mu^{\sigma})_1$ is absolutely continuous, it can be approximated by Monge mappings. Precisely speaking, from \cite[Theorem B]{AIHPB_2007__43_1_1_0},
        \begin{align}
            \w_p((\mu^{\sigma})_1, \mu_1)^p
            =\inf_{T: T_{\#}(\mu^{\sigma})_1=\mu_1}\int \abs{x_1-T(x_1)}^p \mu^{\sigma}(dx_1).
        \end{align}
        Hence there exists a map $F^{\ep}_1:\R^d\to \R^d$ such that $(F^{\ep}_1)_{\#}(\mu^{\sigma})_1=\mu_1$ and
        \begin{align}
          \int \abs{x_1-F^{\ep}_1(x_1)}^p \mu^{\sigma}(dx_1)\le (1+\ep)^p\w_p((\mu^{\sigma})_1, \mu_1)^p.
        \end{align}
        Let us define $\pi^{\ep}_{1,1}:=(\id, F^{\ep}_1)_{\#}(\mu^{\sigma})_1\in \cpl((\mu^{\sigma})_1, \mu_1)$. For $t\in \{1,2, \ldots, T-1\}$, let us denote by $\pi^{\star}_{x_{1:t}, y_{1:t}}\in \cpl((\mu^{\sigma})_{x_{1:t}}, \mu_{y_{1:t}})$ a $\w_p$-optimal coupling for each $x_{1:t}, y_{1:t}\in (\R^d)^t$. It can be chosen in such a way that
        \begin{align}
            (\R^d)^t\times (\R^d)^t \ni (x_{1:t}, y_{1:t})
            \mapsto \pi^{\star}_{x_{1:t}, y_{1:t}}\in \sp(\R^d\times \R^d)\label{eq:wppi}
        \end{align}
        is Borel measurable, where $\sp(\R^d \times \R^d)$ is endowed with the weak topology, see e.g. \cite[Corollary 5.22]{villani2021topics}. Thanks to Lemma \ref{lem:monge}, the map \eqref{eq:wppi} can be approximated by parameterized Monge couplings. Applying Lemma \ref{lem:monge} with $\mathcal{X}=\R^d$ and $\mathcal{Z}=(\R^d)^t\times (\R^d)^t$, we find a measurable map $F^{\ep}_{t+1}:(\R^d)^t\times (\R^d)^t\times \R^d\to \R^d$ such that
        \begin{align}
        \int \abs{x_{t+1}-F^{\ep}_{t+1}(x_{1:t}, y_{1:t}, x_{t+1})}^p (\mu^{\sigma})_{x_{1:t}}(dx_{t+1})
        \le (1+\ep)^p\w_p((\mu^{\sigma})_{x_{1:t}}, \mu_{y_{1:t}} )^p.
        \end{align}
        Set $\pi^{\ep}_{x_{1:t}, y_{1:t}}:=(\id, F^{\ep}_{t+1}(x_{1:t}, y_{1:t}, \cdot\,))_{\#}(\mu^{\sigma})_{x_{1:t}}\in \cpl((\mu^{\sigma})_{x_{1:t}}, \mu_{y_{1:t}})$ and define $\pi^{\ep}\in \cplbc(\mu^{\sigma}, \mu)$ via
        \begin{align}
            \pi^{\ep}(dx_{1:T}, dy_{1:T})
            =\pi^{\ep}_{x_{1:T-1}, y_{1:T-1}}(dx_T, dy_T)\cdots \pi^{\ep}_{x_{1}, y_{1}}(dx_2, dy_2)\pi^{\ep}_{1,1}(dx_1, dy_1).
        \end{align}
        Note that if $X\sim \mu^{\sigma}$, then $Y\sim \mu$, where $Y$ is defined via the following recursive relation,
        \begin{align}
            Y_1=F^{\ep}_{1}(X_{1}) \text{ and } Y_{t+1}=F^{\ep}_{t+1}(X_{1:t}, Y_{1:t}, X_{t+1}) \text{ for } t\in\{1,2, \ldots, T-1\}.\label{eq:G^ep}
        \end{align}
        Substituting \eqref{eq:G^ep} iteratively, the construction of $G^{\ep}$ is now straightforward.
    \end{proof}

    \begin{proof}[Proof of Theorem \ref{thm:muandmusigma}]
        Let $\ep>0$ and choose $\pi^{\ep}=(\id, G^{\ep})_{\#}\mu^{\sigma}\in \cplbc(\mu^{\sigma}, \mu)$ as in Lemma \ref{lem:epcoupling}. Note that
        \begin{align}
            \a\w_p(\mu^{\sigma}, \mu)
            \le \left(\int \sum_{t=1}^T \abs{x_t-y_t}^p \pi^{\ep}(dx,dy)\right)^{1/p}
            \le \sum_{t=1}^{T}\left(\int \abs{x_t-y_t}^p \pi^{\ep}(dx, dy)\right)^{1/p}.\label{eq:awbyIt}
        \end{align}
        Using the fact that $\w_p((\mu^{\sigma})_1, \mu_1)\le (M_p(\xi_{\sigma}))^{1/p}=\sigma (M_p(\xi))^{1/p}$, we deduce from Lemma \ref{lem:epcoupling} that
        \begin{align}
            \text{I}^{\ep}_1
            :=\left(\int \abs{x_{1}-y_{1}}^p \pi^{\ep}(dx, dy)\right)^{1/p}
            \le (1+\ep)\w_p((\mu^\sigma)_1, \mu_1)
            \le (1+\ep)\sigma(M_p(\xi))^{1/p}.\label{eq:I1}
        \end{align}
        Now, fix $t\in \{1,2, \ldots, T-1\}$. Since $G^{\ep}_t$ depends only on the first $t$ coordinates, we denote $(G_1^{\ep}(x), \ldots, G_t^{\ep}(x))$ by $G^{\ep}(x_{1:t})$. Also, let us denote the density of $(\xi_{\sigma})_{1:t}$ by
        \begin{align}
            (f_{\sigma})_{1:t}(x_{1:t})
            :=\int_{(\R^d)^{T-t}} f_{\sigma}(x_{1:T})dx_{t+1:T}.
        \end{align}
        Using the fact that $(\xi_{\sigma})_{z_{1:t}}$ has the density
        \begin{align}
            z_{t+1}\mapsto \frac{(f_{\sigma})_{1:t+1}(z_{1:t+1})}{(f_{\sigma})_{1:t}(z_{1:t})},
        \end{align}
        we compute that for measurable $B\subseteq \R^d$,
        \begin{align}
            (\mu^{\sigma})_{x_{1:t}}(B)
            =\frac{\int (f_{\sigma})_{1:t}(x_{1:t}-z_{1:t}) (\mu_{z_{1:t}}\ast (\xi_{\sigma})_{x_{1:t}-z_{1:t}})(B) \mu(dz_{1:t})}{(f_{\sigma})_{1:t}\ast \mu_{1:t}(x_{1:t})}.\label{pfeq:convexity}
        \end{align}
        A similar identity was used in  \cite[proof of Theorem 2.12]{hou2024convergence}. From Lemma \ref{lem:epcoupling} and convexity of $\w_p^p$ applied to \eqref{pfeq:convexity}, we obtain
        \begin{align}
            &\int \abs{x_{t+1}-y_{t+1}}^p \pi^{\ep}(dx, dy)\\
            &= \int \abs{x_{t+1}-y_{t+1}}^p \pi^{\ep}_{x_{1:t}, y_{1:t}} (dx_{t+1}, dy_{t+1}) \pi^{\ep}(dx_{1:t}, dy_{1:t})\\
            &\le \int  (1+\ep)^p \w_p((\mu^\sigma)_{x_{1:t}}, \mu_{y_{1:t}})^p \pi^{\ep}(dx_{1:t}, dy_{1:t})\\
            &= (1+\ep)^p\int \w_p((\mu^{\sigma})_{x_{1:t}}, \mu_{G^{\ep}(x_{1:t})})^p \mu^{\sigma}(dx_{1:t})\\
            &\le (1+\ep)^p \int \frac{\int (f_{\sigma})_{1:t}(x_{1:t}-z_{1:t})\w_p(\mu_{z_{1:t}}\ast (\xi_{\sigma})_{x_{1:t}-z_{1:t}}, \mu_{G^{\ep}(x_{1:t})})^p\mu (dz_{1:t})}{(f_{\sigma})_{1:t}\ast \mu_{1:t}(x_{1:t})} \mu^{\sigma}(dx_{1:t}).
        \end{align}
        Since $x_{1:t}\mapsto (f_{\sigma})_{1:t}\ast \mu_{1:t}(x_{1:t})$ is the density of $(\mu^{\sigma})_{1:t}$, we use the change of variable $w_{1:t}=x_{1:t}-z_{1:t}$ to obtain
        \begin{align}
            &\int \frac{\int (f_{\sigma})_{1:t}(x_{1:t}-z_{1:t})\w_p(\mu_{z_{1:t}}\ast (\xi_{\sigma})_{x_{1:t}-z_{1:t}}, \mu_{G^{\ep}(x_{1:t})})^p\mu(dz_{1:t})}{(f_{\sigma})_{1:t}\ast \mu_{1:t}(x_{1:t})} \mu^{\sigma}(dx_{1:t})\\
            &=\int \int (f_{\sigma})_{1:t}(x_{1:t}-z_{1:t})\w_p(\mu_{z_{1:t}}\ast (\xi_{\sigma})_{x_{1:t}-z_{1:t}}, \mu_{G^{\ep}(x_{1:t})})^p \mu(dz_{1:t}) dx_{1:t} \\
            &=\int \int \w_p(\mu_{z_{1:t}}\ast (\xi_{\sigma})_{w_{1:t}}, \mu_{G^{\ep}(z_{1:t}+w_{1:t})})^p (\mu_{1:t}\otimes (\xi_{\sigma})_{1:t})(dz_{1:t}, dw_{1:t})\\
            &=\E[\w_p(\mu_{X_{1:t}}\ast (\xi_{\sigma})_{\sigma Z_{1:t}}, \mu_{G^{\ep}(X_{1:t}+\sigma Z_{1:t})})^p],
        \end{align}
        where $(X, Z)\sim \mu\otimes \xi$. Using the above, we bound
        \begin{align}
            &\text{I}^{\ep}_{t+1}:=\left(\int \abs{x_{t+1}-y_{t+1}}^p \pi^{\ep}(dx, dy)\right)^{1/p}\\
            &\le (1+\ep) \E[\w_p(\mu_{X_{1:t}}\ast (\xi_{\sigma})_{\sigma Z_{1:t}}, \mu_{G^{\ep}(X_{1:t}+\sigma Z_{1:t})})^p]^{1/p}\\
            &\le (1+\ep) (\E[\w_p(\mu_{X_{1:t}}\ast (\xi_{\sigma})_{\sigma Z_{1:t}}, \mu_{X_{1:t}})^p]^{1/p}+ \E[\w_p(\mu_{X_{1:t}}, \mu_{G^{\ep}(X_{1:t}+\sigma Z_{1:t})})^p]^{1/p})\\
            &\le (1+\ep)\left(\sigma (M_p(\xi))^{1/p}+\E[\w_p(\mu_{X_{1:t}}, \mu_{G^{\ep}(X_{1:t}+\sigma Z_{1:t})})^p]^{1/p}\right)\\
            &\le (1+\ep)\left(\sigma (M_p(\xi))^{1/p}+\omega_{\mu}^{t,p}(\norm{X_{1:t}-G^{\ep}(X_{1:t}+\sigma Z_{1:t})}_{L^p})\right)\\
            &\le (1+\ep)\left(\sigma (M_p(\xi))^{1/p}+\omega_{\mu}^{t,p}(\sigma (M_p(\xi))^{1/p}+\norm{X_{1:t}+\sigma Z_{1:t}-G^{\ep}(X_{1:t}+\sigma Z_{1:t})}_{L^p})\right)\\
            &\le (1+\ep)\left(\sigma (M_p(\xi))^{1/p}+\omega_{\mu}^{t,p}\left(\sigma (M_p(\xi))^{1/p}+\sum_{s=1}^{t}I^{\ep}_{s}\right)\right).\label{eq:It}
        \end{align}
        Here, the third inequality is immediate from
        \begin{align}
            \E[\w_p(\mu_{X_{1:t}}\ast (\xi_{\sigma})_{\sigma Z_{1:t}}, \mu_{X_{1:t}})^p]
            &\le \E\left[\int \abs{z_{t+1}}^p (\xi_{\sigma})_{\sigma Z_{1:t}}(dz_{t+1})\right]\\
            &=\E\left[\int \abs{z_{t+1}}^p \frac{(f_{\sigma})_{1:t+1}(\sigma Z_{1:t}, z_{t+1})}{(f_{\sigma})_{1:t}(\sigma Z_{1:t})}dz_{t+1}\right]\\
            &=\int\int \abs{z_{t+1}}^p \frac{(f_{\sigma})_{1:t+1}(z_{1:t+1})}{(f_{\sigma})_{1:t}(z_{1:t})}dz_{t+1}(f_{\sigma})_{1:t}(z_{1:t})dz_{1:t}\\
            &=\sigma^p M_p(\xi_{t+1}).
        \end{align}

        %$\w_p(\mu_{X_{1:t}}\ast (\xi_{\sigma})_{t+1}, \mu_{X_{1:t}})\le \sigma (M_p(\xi))^{1/p}$. 
        The definition of $\omega_{\mu}^{t,p}$ justifies the fourth inequality. The fifth inequality is the triangle inequality and monotonicity of $\omega_{\mu}^{t,p}$. To see the last inequality, note that $(X+\sigma Z, G^{\ep}(X+\sigma Z))\sim \pi^{\ep}$ and
        \begin{align}
            \norm{X_{1:t}+\sigma Z_{1:t}-G^{\ep}(X_{1:t}+\sigma Z_{1:t})}_{L^p}
            =\left(\int \abs{x_{1:t}-y_{1:t}}^p \pi^{\ep}(dx_{1:t}, dy_{1:t})\right)^{1/p}
            \le \sum_{s=1}^t \text{I}^{\ep}_s.
        \end{align}
        Starting from the initial estimate \eqref{eq:I1}, we bound $I^{\ep}_t$ by $h^{t-1, p}_{\mu}$ using the recursive estimate $\eqref{eq:It}$ along with Proposition~\ref{prop:propertyofomega}\ref{prop:propertyofomega_const} as follows,
        \begin{align}
            &\liminf_{\ep \to 0}I^{\ep}_{t}
            \le (1\vee (M_p(\xi))^{1/p})(h^{0,p}_{\mu}(\sigma)+th^{t-1,p}_{\mu}(\sigma))\label{pfeq:constantinduction}
        \end{align}
        for all $t\in \{2, 3,\ldots, T\}$. This can be shown by the induction. When $t=2$, we compute from \eqref{eq:I1} and \eqref{eq:It} that 
        \begin{align}
            \liminf_{\ep\to 0}I^{\ep}_2
            &\le \sigma (M_p(\xi))^{1/p}+\omega_{\mu}^{1,p}\left(\sigma (M_p(\xi))^{1/p}+I^{\ep}_{1}\right)\\
            &\le \sigma (M_p(\xi))^{1/p}+\omega_{\mu}^{1,p}\left(2\sigma (M_p(\xi))^{1/p}\right)
            \le (1\vee (M_p(\xi))^{1/p})(\sigma+2\omega^{1,p}_{\mu}(\sigma)).
        \end{align}
        Here, the last inequality follows from Proposition~\ref{prop:propertyofomega}\ref{prop:propertyofomega_const}. This proves \eqref{pfeq:constantinduction} when $t=2$. Assuming \eqref{pfeq:constantinduction} holds for $I^{\ep}_s$, $s\in \{2,\ldots, t\}$, a similar computation shows that
        \begin{align}
            \liminf_{\ep\to 0} I^{\ep}_{t+1}
            &\le \sigma(M_p(\xi))^{1/p}+\omega^{t,p}_{\mu}\left(\sigma(M_p(\xi))^{1/p}+I^{\ep}_1+\sum_{s=2}^{t}I^{\ep}_{s}\right)\\
            &\le \sigma(M_p(\xi))^{1/p}
            +\omega^{t,p}_{\mu}\left(2\sigma(M_p(\xi))^{1/p}+\sum_{s=2}^{t}(1\vee (M_p(\xi))^{1/p})(h^{0,p}_{\mu}(\sigma)+sh^{s-1,p}_{\mu}(\sigma))\right)\\
            &\le \sigma(M_p(\xi))^{1/p}
            +(t+1)(1\vee (M_p(\xi))^{1/p})\omega^{t,p}_{\mu}\left(\sum_{s=0}^{t-1}h^{s,p}_{\mu}(\sigma)\right).
        \end{align}
        Hence \eqref{pfeq:constantinduction} holds for all $t\in \{2,3,\ldots, T\}$.
        Plugging this back into \eqref{eq:awbyIt},
        \begin{align}
            \a\w_p(\mu^{\sigma}, \mu)
            \le \liminf_{\ep \to 0}\sum_{t=1}^{T}I^{\ep}_{t}
            \le \sigma(M_p(\xi))^{1/p}+\sum_{t=2}^{T}(1\vee (M_p(\xi))^{1/p})(h^{0,p}_{\mu}(\sigma)+th^{t-1,p}_{\mu}(\sigma)).
        \end{align}
        This proves the desired estimate.
    \end{proof}

    \begin{proof}[Proof of Corollary \ref{cor:uniform_moc}]
        \ref{cor:uniform_moc_holder}  From Proposition~\ref{prop:omega_holder}, $\omega^{t,p}_{\mu}(\sigma)\le L \sigma^{\alpha}$ if $\mu\in \sp_{p, \alpha, L}((\R^d)^T)$. Note from the construction of $h^{t,p}_{\mu}$ that $h^{1,p}_{\mu}(\sigma)=\omega^{1,p}_{\mu}(\sigma)\le L\sigma^{\alpha}$ and
        \begin{align}
            h^{2,p}_{\mu}(\sigma)
            =\omega^{2,p}_{\mu}(\sigma+h^{1,p}_{\mu}(\sigma))
            \le L(\sigma+L\sigma^{\alpha})^{\alpha}
            \le L(\sigma^{\alpha}+L^{\alpha}\sigma^{\alpha^2})
            \le (L\vee L^{\alpha+1})(\sigma^{\alpha}+\sigma^{\alpha^2}).
        \end{align}
        It is straightforward from the mathematical induction that $h^{t,p}_{\mu}(\sigma)\le C (\sigma^{\alpha}+\sigma^{\alpha^2}+\ldots+\sigma^{\alpha^{t}})$ for some $C>0$ that depends only on $L, \alpha$ if $t\in \{1,2,\ldots, T-1\}$. Applying this bound to Theorem \ref{thm:muandmusigma} yields the desired estimate.

        \ref{cor:uniform_moc_general}  From Theorem \ref{thm:muandmusigma}, we find
        \begin{align}
            \sup_{\mu\in K}\a\w_p(\mu^{\sigma}, \mu)
            \le T(1\vee (M_p(\xi))^{1/p})\sum_{t=0}^{T-1}h^{t,p}_{K}(\sigma)\label{eq:uniform_bandwidth}
        \end{align}
        where we define $h^{0,p}_{K}(\sigma):=\sigma$ and $h^{t,p}_{K}(\sigma):=\sup_{\mu\in K} \omega^{t,p}_{\mu}\left(\sum_{s=0}^{t-1} h^{s,p}_{K}(\sigma)\right)$, $t\in \{1,2,\ldots, T-1\}$. Note that if $\sup_{\mu\in K}\omega^{t,p}_{\mu}(\delta)\to 0$ as $\delta\to 0$, the right hand side of \eqref{eq:uniform_bandwidth} goes to $0$ as $\sigma\to 0$. The last statement is a straightforward application of a triangle inequality.
    \end{proof}

    \begin{proof}[Proof of Corollary \ref{cor:empiricalvaryingsigma}]
        From Corollaries~\ref{cor:slowrate} and \ref{cor:uniform_moc}\ref{cor:uniform_moc_holder}, we estimate
        \begin{align}
            \E[\a\w_p(\mu, (\boldsymbol{\mu_n})^{\sigma})^p]
            &\le C(\E[\a\w^{(\sigma)}_p(\mu, \boldsymbol{\mu_n})^p]+\E[\a\w_p(\mu^{\sigma}, \mu)^p])\\
            &\le C\left(\frac{1+\sigma^{(q+p)/2}}{\sigma^{(1+dT/2)(1-p/q)}}\right)n^{-(q-p)/(2q)}
            +C\sum_{t=0}^{T-1}\sigma^{p\alpha^{t}}.
        \end{align}
        We optimize the convergence rate of the right hand side by choosing $\sigma = n^{-\gamma}$ where
        \begin{align}
            \gamma=\frac{1-p/q}{(dT+2)(1-p/q)+2p\alpha^{T-1}}.
        \end{align}
    \end{proof}

\subsection{Proofs for Section \ref{sec:boundingAW}}\label{sec:pf_boundingAW}
      \begin{proof}[Proof of Proposition \ref{prop:boundingAWvanilla}]
          This is straightforward from the inequality
          \begin{align}
              \a\w_p(\mu, \nu)^p
              \le 3^{p-1}\a\w^{(\sigma)}_p(\mu, \nu)
              +3^{p-1}\a\w_p(\mu, \mu^{\sigma})
              +3^{p-1}\a\w_p(\nu, \nu^{\sigma})
          \end{align}
          applied to Theorem \ref{thm:awsigmaw1} and Theorem \ref{thm:muandmusigma}.
      \end{proof}

      \begin{proof}[Proof of Corollary \ref{cor:boundingAW}]
          Choose $\xi$ to be supported on $\{x\in (\R^d)^T :\, \abs{x}\le 1\}$. From $0<\sigma<R$, we have
          \begin{align}
              \int_{\{\abs{x}\ge 2R\}}\abs{x}^p \mu^{\sigma}(dx)
              &=\int_{\{\abs{x+\sigma z}\ge 2R\}}\abs{x+\sigma z}^p \mu(dx)\xi(dz)\\
              &\le \int_{\{\abs{x}\ge R\}}\abs{x+\sigma z}^p \mu(dx)\xi(dz)
              \le 2^{p-1}\int_{\{\abs{x}\ge R\}}\abs{x}^p \mu(dx)
              +2^{p-1}\sigma^p.
          \end{align}
          Here, the last inequality follows from
          \begin{align}
              \abs{x+\sigma z}^p
              \le 2^{p-1}(\abs{x}^p+\sigma^p \abs{z}^p)
              \le 2^{p-1}(\abs{x}^p+\sigma^p)
              \text{ for }\mu\otimes \xi\text{-a.e. }(x,z).
          \end{align}
          Recalling from Theorem~\ref{thm:muandmusigma} that $h^{0,p}_{\mu}(\sigma)=\sigma$, we obtain
          \begin{align}
              \int_{\{\abs{x}\ge 2R\}}\abs{x}^p \mu^{\sigma}(dx)
              \le 2^{p-1}\int_{\{\abs{x}\ge R\}}\abs{x}^p \mu(dx)
              +2^{p-1}\left(\sum_{t=0}^{T-1}h^{t,p}_{\mu}(\sigma)\right)^p.
          \end{align}
          Applying this bound to Proposition \ref{prop:boundingAWvanilla} yields the desired result.
      \end{proof}
      
      \begin{proof}[Proof of Corollary \ref{cor:boundingAWcpt}]
      \ref{cor:boundingAWcpt_vanilla} By a translation argument we may assume that $0\in F$. Let $\mu, \nu\in \sp(F)$. Choose $\xi$ to be supported on $\{x\in (\R^d)^T :\, \abs{x}\le 1\}$. By combining the triangle inequality with Corollary \ref{cor:awsigmaw1_mom}\ref{cor:awsigmaw1_mom_cpt} and Theorem \ref{thm:muandmusigma}, we compute
      \begin{align}
          \a\w_p(\mu, \nu)
          &\le \a\w^{(\sigma)}_p(\mu, \nu)
          +2\left(\a\w_p(\mu, \mu^{\sigma})\vee \a\w_p(\nu, \nu^{\sigma})\right)\\
          &\le C\left((1+\sigma)\left(\frac{\w_1(\mu, \nu)}{\sigma}\right)^{1/p}
          +\sum_{t=0}^{T-1}h^{t,p}_{\mu}(\sigma)\vee h^{t,p}_{\nu}(\sigma)\right)
      \end{align}
      for some $C>0$ that depends only on $d, T, \diam(F)$. This proves the desired estimate.

      \ref{cor:boundingAWcpt_holder} Let $\mu, \nu\in \sp(F)\cap \sp_{p, \alpha, L}((\R^d)^T)$. Using Corollary~\ref{cor:boundingAWcpt}\ref{cor:boundingAWcpt_vanilla} and Corollary \ref{cor:uniform_moc}\ref{cor:uniform_moc_holder}, we estimate
        \begin{align}
            \a\w_p(\mu, \nu)
            \le C\left( (1+\sigma)\left(\frac{\w_1(\mu,\nu)}{\sigma}\right)^{1/p}
            +\sum_{t=0}^{T-1}\sigma^{\alpha^t}\right)\label{pfeq:awrelcpt_w}
        \end{align}
        with some constant $C>0$ that depends only on $d, T, L, \alpha, \diam(F)$. In \eqref{pfeq:awrelcpt_w}, choose $\sigma=\w_1(\mu, \nu)^{\beta}$ for $\beta=(p\alpha^{T-1}+1)^{-1}$ and we bound the right-hand side by
        \begin{align}
            &(1+\sigma)\left(\frac{\w_1(\mu,\nu)}{\sigma}\right)^{1/p}
            +\sum_{t=0}^{T-1}\sigma^{\alpha^t}\\
            &=(1+\w_1(\mu, \nu)^{\beta})\w_1(\mu,\nu)^{\alpha^{T-1}/(p\alpha^{T-1}+1)}
            +\sum_{t=0}^{T-1}\w_1(\mu,\nu)^{\alpha^{t}/(p\alpha^{T-1}+1)}\\
            &\le \left(1+(2\diam(F))^{\beta}+\sum_{t=0}^{T-1}(2\diam(F))^{(\alpha^t-\alpha^{T-1})/(p\alpha^{T-1}+1)}\right)\w_1(\mu,\nu)^{\alpha^{T-1}/(p\alpha^{T-1}+1)}.
        \end{align}
        Here, we make use of the fact that $\w_1(\mu, \nu)\le M_1(\mu)+M_1(\nu)\le 2\diam(F)$. This proves the desired result.
    \end{proof}

\subsection{Proofs for Section \ref{sec:topology}}\label{sec:pf_topology}
In this section we prove Theorem \ref{thm:slowsigma}, Theorem \ref{thm:fastsigma} and Theorem \ref{thm:fixedsigma}. All the proofs are based on the following simple reformulation of the sequential convergence.

\begin{lem}\label{lem:subseq}
    Let $\x$ be a metric space. Suppose $(x_n)_{n\in \N}\subseteq \x$ and $x\in \x$. Then $x_n\to x$ in $\x$ if and only if for every subsequence $(x_{n_k})_{k\in \N}$ of $(x_n)_{n\in \N}$, there exists a subsequence of $(x_{n_k})_{k\in \N}$ that converges to $x$.
\end{lem}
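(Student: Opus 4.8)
The plan is to prove the two implications separately, with the forward direction being immediate and the reverse direction handled by contraposition.

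For the ``only if'' direction, suppose $x_n \to x$ in $\x$. Given any subsequence $(x_{n_k})_{k\in\N}$, the sequence $(x_{n_k})_{k\in\N}$ itself already converges to $x$ (a subsequence of a convergent sequence converges to the same limit, directly from the $\ep$-$N$ definition of convergence), so it serves as the required convergent sub-subsequence. This direction uses nothing beyond the definitions.

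For the ``if'' direction I would argue by contraposition. Assume $x_n \not\to x$. Negating the definition of convergence, there is some $\ep_0 > 0$ such that for every $N \in \N$ there exists $n \ge N$ with $d(x_n, x) \ge \ep_0$. I would then extract indices $n_1 < n_2 < \cdots$ inductively: pick $n_1$ with $d(x_{n_1},x)\ge\ep_0$, and having chosen $n_1 < \cdots < n_k$, apply the above property with $N = n_k+1$ to obtain $n_{k+1} > n_k$ with $d(x_{n_{k+1}}, x) \ge \ep_0$. The resulting subsequence $(x_{n_k})_{k\in\N}$ then satisfies $d(x_{n_k},x) \ge \ep_0$ for every $k$, and hence so does every further subsequence of it; in particular, no subsequence of $(x_{n_k})_{k\in\N}$ can converge to $x$. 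Thus the right-hand condition in the statement fails, which is exactly the contrapositive of the desired implication.

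There is essentially no genuine obstacle here — the statement is a standard elementary fact about metric spaces, and the point of including it is purely to streamline the subsequence arguments in the proofs of Theorems~\ref{thm:slowsigma}, \ref{thm:fastsigma} and \ref{thm:fixedsigma}. The only step that warrants a line of care is the inductive construction of the ``bad'' subsequence together with the observation that the uniform lower bound $d(\cdot, x) \ge \ep_0$ is inherited by all of its subsequences, which is what precludes convergence along any of them.
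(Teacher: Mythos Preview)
Your proof is correct and is the standard argument. The paper actually states Lemma~\ref{lem:subseq} without proof, treating it as a well-known elementary fact used only to streamline the subsequence arguments in Theorems~\ref{thm:slowsigma}--\ref{thm:fixedsigma}; your write-up fills in exactly the routine details one would expect.
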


\begin{proof}[Proof of Theorem \ref{thm:slowsigma}]
    \ref{thm:slowsigma_imply} Suppose $\a\w^{(\sigma_n)}_p(\mu_n, \mu)\to 0$. From Lemma \ref{lem:subseq}, by passing to a subsequence, it is sufficient to assume that $\sigma_n\to \sigma_0$ for some $\sigma_0\in [0,\infty)$. From Proposition~\ref{prop:smoothW},
    \begin{align}
        \w^{(\sigma_0)}_p(\mu_{n}, \mu)
        \le \w_p^{(\sigma_n)}(\mu_n, \mu)
        +2\abs{\sigma_n-\sigma_0}(M_p(\xi))^{1/p}
        \le \a\w_p^{(\sigma_n)}(\mu_n, \mu)
        +2\abs{\sigma_n-\sigma_0}(M_p(\xi))^{1/p}.
    \end{align}
    Since the right hand side converges to $0$ as $n\to \infty$, we conclude that $\mu_n\to \mu$ in $\w^{(\sigma_0)}_p$. We conclude that $\mu_n\to \mu$ in $\w_p$ thanks to Proposition~\ref{prop:smoothW}.

    \ref{thm:slowsigma_reverse} Suppose now that $\w_p(\mu_n, \mu)\to 0$ and $\w_1(\mu_n, \mu)/\sigma_n\to 0$. From Lemma \ref{lem:subseq}, we may assume that $\sigma_n\to \sigma_0$ for some $\sigma_0\in [0,\infty)$. We find from Theorem \ref{thm:awsigmaw1} that for any $R>0$,
        \begin{align}
            \limsup_{n\to \infty}\a\w^{(\sigma_n)}_p(\mu_n, \mu)
            \le 6^{p-1}T2^T \sup_{n\in \N}\int_{\{\abs{x}\ge R\}} \abs{x}^p (\mu_n^{\sigma_n}+\mu^{\sigma_n})(dx).\label{eq:proof_w_prelcpt}
        \end{align}
        Note that $\mu^{\sigma_n}_n\to \mu^{\sigma_0}$ and $\mu^{\sigma_n}\to \mu^{\sigma_0}$ in $\w_p$. In particular, $\{\mu^{\sigma_n}_n, \mu^{\sigma_n} : n\in \N\}$ is $\w_p$-relatively compact. Thus, the right hand side of \eqref{eq:proof_w_prelcpt} can be made arbitrarily small as $R\to \infty$. This shows that $\a\w^{(\sigma_n)}_p(\mu_n, \mu)\to 0$.
\end{proof}

\begin{proof}[Proof of Theorem \ref{thm:fastsigma}]
    \ref{thm:fastsigma_imply}  Suppose $\a\w_p(\mu_n, \mu)\to 0$. As in the proof of Theorem \ref{thm:slowsigma}, it is sufficient to prove $\a\w^{(\sigma_n)}_p(\mu_n, \mu)\to $ when $\sigma_n\to \sigma_0$ for some $\sigma_0\in [0,\infty)$. If $\sigma_0>0$, then $\w_1(\mu_n, \mu)/\sigma_n\to 0$. By Theorem~\ref{thm:slowsigma}\ref{thm:slowsigma_reverse} this implies that $\a\w^{(\sigma_n)}_p(\mu_n, \mu)\to 0$. Suppose now that $\sigma_0=0$. By the triangle inequality,
    \begin{align}
        \a\w^{(\sigma_n)}_p(\mu_n, \mu)
        \le \a\w_p(\mu_n, \mu)
        +2\sup_{\nu\in K}\a\w_p(\nu^{\sigma_n}, \nu)
    \end{align}
    where $K=\{\mu, \mu_n :\, n\in \N\}$. Note that $K$ is $\a\w_p$-relatively compact. Thus, Proposition \ref{prop:awrelcpt} and Corollary~\ref{cor:uniform_moc}\ref{cor:uniform_moc_general} show that $\sup_{\nu\in K}\a\w_p(\nu^{\sigma_n}, \nu)\to 0$. In conclusion, $\a\w^{(\sigma_n)}_p(\mu_n, \mu)\to 0$.

    \ref{thm:fastsigma_reverse} Suppose $\a\w^{(\sigma_n)}_p(\mu_n, \mu)\to 0$ and $h^{t,p}_{\mu_n}(\sigma_n)\to 0$ for all $t\in \{1,2,\ldots, T-1\}$. Since $\omega^{t,p}_{\mu_n}\le h^{t,p}_{\mu_n}$, we have $\omega^{t,p}_{\mu_n}(\sigma_n)\to 0$. Using Lemma \ref{lem:subseq}, we may assume that $\sigma_n\to \sigma_0$ for some $\sigma_0\in [0,\infty)$. If $\sigma_0>0$, fix $\ep>0$ and choose $N\in \N$ such that $\sup_{n\ge N}\omega^{t,p}_{\mu_n}(\sigma_n)\le \ep$ and $\inf_{n\ge N}\sigma_n\ge\sigma_0/2$. Note that
    \begin{align}
        \limsup_{\delta\to 0}\sup_{n\in \N}\omega^{t,p}_{\mu_n}(\delta)
        \le \limsup_{\delta\to 0} \sum_{n=1}^{N}\omega^{t,p}_{\mu_n}(\delta)
        +\sup_{n\ge N}\omega^{t,p}_{\mu_n}(\sigma_0/2)
        \le 0+\sup_{n\ge N}\omega^{t,p}_{\mu_n}(\sigma_n)
        \le \ep.
    \end{align}
    Since $\ep>0$ was arbitrary and $\w_p(\mu_n, \mu)\to 0$, we deduce from Proposition \ref{prop:awrelcpt} that $\{\mu_n :\, n\in \N\}$ is $\a\w_p$-relatively compact. Thus, $\a\w_p(\mu_n, \mu)\to 0$.

    Now, suppose $\sigma_0=0$. Then by the triangle inequality,
    \begin{align}
        \limsup_{n\to \infty}\a\w_p(\mu_n, \mu)
        &\le \limsup_{n\to \infty}\left(\a\w^{(\sigma_n)}_p(\mu_n, \mu)
        +\a\w_p(\mu_n^{\sigma_n}, \mu_n)
        +\a\w_p(\mu^{\sigma_n}, \mu)\right)\\
        &=\limsup_{n\to \infty}\a\w_p(\mu_n^{\sigma_n}, \mu_n).
    \end{align}
    Here, the equality follows from Corollary \ref{cor:uniform_moc}\ref{cor:uniform_moc_general} applied to $K=\{\mu\}$. It is easy to check from Theorem \ref{thm:muandmusigma} that $h^{t,p}_{\mu_n}(\sigma_n)\to 0$ for all $t\in \{1,2,\ldots, T-1\}$ implies that $\a\w_p(\mu^{\sigma_n}_n, \mu_n)\to 0$. Hence, we conclude that $\a\w_p(\mu_n, \mu)\to 0$.
\end{proof}

\begin{proof}[Proof of Theorem \ref{thm:fixedsigma}]
    From Proposition~\ref{prop:smoothW} and the fact that $\w^{(\sigma)}_p\le C\a\w^{(\sigma)}_p$, it is obvious that if $\mu_n\to \mu$ in $\a\w^{(\sigma)}_p$, then $\mu_n\to \mu$ in $\w_p$. Conversely, let us assume $\w_p(\mu_n, \mu)\to 0$. By choosing $\sigma_n\equiv\sigma$ in Theorem~\ref{thm:slowsigma}\ref{thm:slowsigma_reverse}, we obtain that $\a\w^{(\sigma)}_p(\mu_n, \mu)\to 0$. Therefore, $\a\w^{(\sigma)}_p$ metrizes the $\w_p$-topology. 
\end{proof}

\appendix

\section{Proof regarding Example \ref{ex:standardex}}\label{appendix:pfex}
    \begin{ex}
        Let $(\ep_n)_{n\in \N}, (\sigma_n)_{n\in \N}\subseteq (0, \infty)$ such that $\ep_n\to 0$ and $\sup_{n\in \N}\sigma_n<\infty$. We define $\mu_n:=\frac{1}{2}\delta_{(\ep_n, 1)}+\frac{1}{2}\delta_{(-\ep_n, -1)}$, $\mu:=\frac{1}{2}\delta_{(0,1)}+\frac{1}{2}\delta_{(0,-1)}$, and $\xi_{\sigma_n}:=\n(0, \sigma^2_n\Id_2)$. Then we have
        \begin{anumerate}
            \item\label{ex:slow} $\frac{\ep_n}{\sigma_n}\to 0$ if and only if $\frac{\w_1(\mu_n, \mu)}{\sigma_n}\to 0$ if and only if $\a\w^{(\sigma_n)}_p(\mu_n, \mu)\to 0$.
            \item\label{ex:fast} $\frac{\ep_n}{\sigma_n}\to \infty$ if and only if $\omega^{1,p}_{\mu_n}(\sigma_n)\to 0$ if and only if $\a\w_p(\mu_n^{\sigma_n}, \mu_n)\to 0$.
        \end{anumerate}
    \end{ex}

    \begin{proof}
        \ref{ex:slow} It is evident from $\w_p(\mu_n, \mu)=\ep_n$ that $\ep_n/\sigma_n\to 0$ if and only if $\w_1(\mu_n, \mu)/\sigma_n\to 0$. The fact that $\w_1(\mu_n, \mu)/\sigma_n\to 0$ implies $\a\w^{(\sigma_n)}_p(\mu_n, \mu)\to 0$ follows from Theorem~\ref{thm:slowsigma}\ref{thm:slowsigma_reverse}. Let us assume that $\a\w^{(\sigma_n)}_p(\mu_n, \mu)\to 0$. We compute $(\mu^{\sigma_n})_{1}=N(0, \sigma_n^2)$, $(\mu^{\sigma_n})_x=\frac{1}{2}\n(1, \sigma_n^2)+\frac{1}{2}\n(-1, \sigma_n^2)$ and
        \begin{align}
        &(\mu^{\sigma_n}_n)_1=\frac{1}{2}\n(\ep_n, \sigma_n^2)+\frac{1}{2}\n(-\ep_n, \sigma_n^2),\\
        &(\mu^{\sigma_n}_n)_y
        =\frac{\varphi_{\sigma_n}(y-\ep_n)}{\varphi_{\sigma_n}(y+\ep_n)+\varphi_{\sigma_n}(y-\ep_n)}\n(1,\sigma_n^2)+\frac{\varphi_{\sigma_n}(y+\ep_n)}{\varphi_{\sigma_n}(y+\ep_n)+\varphi_{\sigma_n}(y-\ep_n)}\n(-1, \sigma^2_n)
        \end{align}
        where $\varphi(x):=(2\pi)^{-1/2}e^{-x^2/2}$ and $\varphi_{\sigma_n}(\cdot):=(\sigma_n)^{-1}\varphi(\cdot/\sigma_n)$. Since $(\mu^{\sigma_n})_x$ does not depend on $x$, 
        \begin{align}
        \a\w^{(\sigma_n)}_p(\mu, \mu_n)^p
        &=\inf_{\gamma\in \cpl((\mu^{\sigma_n})_1, (\mu^{\sigma_n}_n)_1)}\int_{\R^2} \abs{x-y}^p +\w_p((\mu^{\sigma_n})_x, (\mu_n^{\sigma_n})_y)^p \gamma(dx, dy)\\
        &=\w_p((\mu^{\sigma_n})_1, (\mu^{\sigma_n}_n)_1)^p
        +\int_{-\infty}^{\infty} \w_p((\mu^{\sigma_n})_x, (\mu_n^{\sigma_n})_y)^p(\mu^{\sigma_n}_n)_1(dy).
    \end{align}
    Using a change of variable, we write
    \begin{align}
        &\int_{-\infty}^{\infty} \w_p((\mu^{\sigma_n})_x, (\mu_n^{\sigma_n})_y)^p(\mu^{\sigma_n}_n)_1(dy)\\
        &=\int_{-\infty}^{\infty} \w_p\left(\n(1, \sigma^2_n)/2+\n(-1,\sigma^2_n)/2, a_n(y)\n(1, \sigma^2_n)+b_n(y)\n(-1,\sigma^2_n)\right)^p\varphi(y)dy,\label{pfeq:ex_e}
    \end{align}
    where $a_n(y):=\varphi(y)/(\varphi(y)+\varphi(y+2\ep_n/\sigma_n))$ and $b_n(y):=1-a_n(y)$. If $\ep_n/\sigma_n\nrightarrow 0$, then up to a subsequence, we may assume that $\ep_n/\sigma_n\to c$ for some $c\in (0, \infty]$. From \eqref{pfeq:ex_e} and the Fatou lemma,
    \begin{align}
        0=\lim_{n\to \infty}\a\w_p^{(\sigma_n)}(\mu, \mu_n)^p
        &\ge \liminf_{n\to \infty}\int_{-\infty}^{\infty} \w_p((\mu^{\sigma_n})_x, (\mu_n^{\sigma_n})_y)^p(\mu^{\sigma_n}_n)_1(dy)\\
        &= \liminf_{n\to \infty}\int_{-\infty}^{\infty} \frac{1}{2} (\w_p((\mu^{\sigma_n})_x, (\mu_n^{\sigma_n})_{\epsilon_n+\sigma_n y})^p + \w_p((\mu^{\sigma_n})_x, (\mu_n^{\sigma_n})_{-\epsilon_n+\sigma_n y} )^p ) \varphi(y)dy\\
        &\ge\int_{-\infty}^{\infty}\w_p\left(\frac{\delta_1+\delta_{-1}}{2}, \frac{\varphi(y)\delta_1+\varphi(y+2c)\delta_{-1}}{\varphi(y)+\varphi(y+2c)}\right)^p\varphi(y)dy
        >0.
    \end{align}
    Thus, $\a\w^{(\sigma_n)}_p(\mu, \mu_n)\to 0$ only if $\ep_n/\sigma_n\to 0$. 
    
    \ref{ex:fast} It is easy check that $\omega^{1,p}_{\mu_n}(\sigma_n)=(\sigma_n/\ep_n)\wedge 2$. Hence $\ep_n/\sigma_n\to \infty$ if and only if $\omega^{1,p}_{\mu_n}(\sigma_n)\to 0$. From Theorem~\ref{thm:muandmusigma}, we have $\a\w_p(\mu^{\sigma_n}_n, \mu_n)
        \le C (\sigma_n+\omega^{1,p}_{\mu_n}(\sigma_n))$. Hence $\omega^{1,p}_{\mu_n}(\sigma_n)\to 0$ yields $\a\w_p(\mu^{\sigma_n}_n, \mu_n)\to 0$. Now, let us assume that $\a\w_p(\mu^{\sigma_n}_n, \mu_n)\to 0$. Note that
    \begin{align}
        \a\w_p(\mu_n^{\sigma_n}, \mu_n)^p
        =\inf_{\gamma\in \cpl((\mu_n^{\sigma_n})_1, (\mu_n)_1)}
        \int_{\R^2} \abs{x-y}^p +\w_p((\mu_n^{\sigma_n})_x, (\mu_n)_y)^p\gamma(dx, dy).\label{pfeq:ex_awsmooth}
    \end{align}
    Choose $\gamma^{\star}$ to be optimal for \eqref{pfeq:ex_awsmooth}. Let $Z\sim \n(0,1)$, $c_n(x):=\varphi_{\sigma_n}(x-\ep_n)/(\varphi_{\sigma_n}(x-\ep_n)+\varphi_{\sigma_n}(x+\ep_n))$ and $d_n(x):=1-c_n(x)$. Then
    \begin{align}
        \a\w_p(\mu_n^{\sigma_n}, \mu_n)^p
        &\ge\int_{\{y=\ep_n\}} \w_p((\mu_n^{\sigma_n})_x, \delta_1)^p\gamma^{\star}(dx, dy)
        +\int_{\{y=-\ep_n\}}\w_p((\mu_n^{\sigma_n})_x, \delta_{-1})^p\gamma^{\star}(dx, dy)\\
        &=\int_{\{y=\ep_n\}} c_n(x)\E[\abs{\sigma_n Z}^p]+d_n(x)\E[\abs{2+\sigma_n Z}^p] \gamma^{\star}(dx, dy)\\
        &+\int_{\{y=-\ep_n\}} c_n(x)\E[\abs{2+\sigma_n Z}^p]+d_n(x)\E[\abs{\sigma_n Z}^p] \gamma^{\star}(dx, dy)\\
        &\ge (\E[\abs{\sigma_n Z}^p]+\E[\abs{2+\sigma_n Z}^p])\int_{-\infty}^{\infty} (c_n(x)\wedge d_n(x))(\mu^{\sigma_n}_n)_1(dx).
    \end{align}
    We compute
    \begin{align}
        \int_{-\infty}^{\infty} (c_n(x)\wedge d_n(x))(\mu^{\sigma_n}_n)_1(dx)
        &=\frac{1}{2}\int_{-\infty}^{\infty} (\varphi_{\sigma_n}(x-\ep_n)\wedge\varphi_{\sigma_n}(x+\ep_n))dx\\
        &=\frac{1}{2}\left(\int_{-\infty}^{-\ep_n/\sigma_n}+\int_{\ep_n/\sigma_n}^{\infty}\right)\varphi(x)dx.
    \end{align}
    Hence $\a\w_p(\mu^{\sigma_n}_n, \mu_n)\to 0$ implies $\sigma_n/\ep_n\to \infty$.
    \end{proof}

\section{Monge maps}\label{appendix:monge}

    Let $\mathcal{A}, \mathcal{B}, \mathcal{Z}$ be Polish spaces. We say that $\pi$ is a kernel from $\mathcal{Z}$ to $\mathcal{A}$ if $\pi$ is a Borel measurable map from $\mathcal{Z}$ to $\sp(\mathcal{A})$. Here, $\sp(\mathcal{A})$ is endowed with the weak topology. We will denote the probability measure $\pi(z)$ by $\pi^{z}$. We use a similar notation for functions. Given a Borel measurable function $T:\mathcal{Z}\times \mathcal{A}\to \mathcal{B}$, we define $T^z:\mathcal{A}\to \mathcal{B}$ via $T^z(a)=T(z,a)$. For a partition $\mathcal{R}$ of $\mathcal{X}$, we define its mesh via $\norm{\mathcal{R}}=\sup_{A\in \mathcal{R}}\diam(A)$.

    \begin{lem}\label{lem:monge}
        Let $\mathcal{X},\mathcal{Z}$ be Polish spaces and let $\pi$ be a kernel from $\mathcal{Z}$ to $\mathcal{X}\times \mathcal{X}$. Let $\mu^{z}$ and $\nu^{z}$ denote the pushforward measure of $\pi^{z}$ through a projection on to the first coordinate and the second coordinate, respectively, i.e. $\pi^{z}\in \cpl(\mu^{z}, \nu^{z})$. Suppose $\mu^z, \nu^{z}\in \sp_p(\mathcal{X})$ and $\mu^{z}$ has no atoms for all $z\in \mathcal{Z}$. Then there exist Borel measurable maps $T_n:\mathcal{Z}\times \mathcal{X}\to \mathcal{X}$ such that $(\id, T^z_n)_{\#}\mu^z\in \cpl(\mu^z, \nu^z)$ and
        \begin{align}
            \int_{\mathcal{X}\times \mathcal{X}} d_{\mathcal{X}}(x,y)^p \pi^{z}(dx, dy)
            =\lim_{n\to \infty} \int_{\mathcal{X}} d_{\mathcal{X}}(x, T_n^{z}(x))^p \mu^{z}(dx).
        \end{align}
        Here $d_{\mathcal{X}}$ is a metric on $\mathcal{X}$ that induces the Polish topology.
    \end{lem}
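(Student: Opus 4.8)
The plan is to carry out, with measurable dependence on the parameter $z$, the discretisation argument behind the classical density of Monge couplings in cost (as in the proof of \cite[Theorem B]{AIHPB_2007__43_1_1_0}). First I would fix a countable dense set $(x_k)_{k\in\N}$ of $\mathcal{X}$ and, for each $n\in\N$, the Borel partitions $\{A_{n,j}\}_{j\in\N}$ and $\{B_{n,i}\}_{i\in\N}$ of $\mathcal{X}$ obtained by disjointifying the balls $\{B(x_k,1/(2n))\}_k$; every piece then has diameter at most $1/n$, with $A_{n,j}\subseteq B(x_j,1/(2n))$ and $B_{n,i}\subseteq B(x_i,1/(2n))$. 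For each $z$ put $m_{ji}(z):=\pi^z(A_{n,j}\times B_{n,i})$, so that $\sum_i m_{ji}(z)=\mu^z(A_{n,j})$ and $\sum_j m_{ji}(z)=\nu^z(B_{n,i})$. The target is a Borel map $T_n(z,\cdot):\mathcal{X}\to\mathcal{X}$ whose graph coupling $\gamma_n^z:=(\id,T_n(z,\cdot))_{\#}\mu^z$ lies in $\cpl(\mu^z,\nu^z)$ and assigns to each cell $A_{n,j}\times B_{n,i}$ the same mass $m_{ji}(z)$ as $\pi^z$ (up to $\mu^z$-null sets).

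To build $T_n(z,\cdot)$: since $\mu^z$ is atomless, I would split each block $A_{n,j}$ into Borel pieces $\{A_{ji}(z)\}_{i\in\N}$ with $\mu^z(A_{ji}(z))=m_{ji}(z)$; because $\sum_j m_{ji}(z)=\nu^z(B_{n,i})$, the sets $\widehat A_i(z):=\bigcup_j A_{ji}(z)$ again form a Borel partition of $\mathcal{X}$, now with $\mu^z(\widehat A_i(z))=\nu^z(B_{n,i})$. On each $\widehat A_i(z)$ choose any Borel map $S_i^z$ with $(S_i^z)_{\#}(\mu^z|_{\widehat A_i(z)})=\nu^z|_{B_{n,i}}$ — such a map exists because an atomless measure can be transported onto any measure of the same total mass (compose a probability-integral transform with a generalised quantile transform) — and set $T_n(z,\cdot):=S_i^z$ on $\widehat A_i(z)$. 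Then $(T_n(z,\cdot))_{\#}\mu^z=\sum_i\nu^z|_{B_{n,i}}=\nu^z$, so $\gamma_n^z\in\cpl(\mu^z,\nu^z)$; moreover $S_i^z$ takes values in $B_{n,i}$ for $\mu^z$-a.e.\ point of $\widehat A_i(z)$ (its pushforward being concentrated there), so for $\mu^z$-a.e.\ $x\in A_{ji}(z)$ one has $x\in A_{n,j}$ and $T_n(z,x)\in B_{n,i}$, giving the cell-mass matching.

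The main obstacle — and essentially the only nontrivial point — is the joint Borel measurability of these $z$-dependent partitions and maps. After fixing a Borel total order on $\mathcal{X}$ (using that $\mathcal{X}$ is standard Borel), the splitting of $A_{n,j}$ is realised through the cumulative-distribution map $H_{n,j}(z,x):=\mu^z(\{x'\in A_{n,j}:x'\preceq x\})$, which is jointly Borel ($z\mapsto\mu^z$ is a Borel kernel and the map is monotone in $x$) and which pushes $\mu^z|_{A_{n,j}}$ to Lebesgue measure on $[0,\mu^z(A_{n,j})]$; cutting along the thresholds $\sum_{k<i}m_{jk}(z)$, which are Borel in $z$ since $\pi$ is a Borel kernel, yields the $A_{ji}(z)$ with the right masses. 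The maps $S_i^z$ are obtained the same way, as compositions of the cumulative-distribution map of $\mu^z|_{\widehat A_i(z)}$ (jointly Borel by a Fubini-type argument for kernels) with a generalised inverse of the distribution function of $\nu^z|_{B_{n,i}}$; hence $(z,x)\mapsto T_n(z,x)=\sum_i\ind{\{x\in\widehat A_i(z)\}}S_i^z(x)$ is Borel. These are standard measurable-selection facts, in the spirit of the measurable choice of optimal couplings already invoked in the paper (cf.\ \cite[Corollary 5.22]{villani2021topics}).

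The cost convergence is then routine. Fixing $z$: on each cell $A_{n,j}\times B_{n,i}$ both $d_{\mathcal{X}}(x,y)$ and $d_{\mathcal{X}}(x,T_n(z,x))$ lie within $1/n$ of $d_{\mathcal{X}}(x_j,x_i)$, and since $\gamma_n^z$ and $\pi^z$ carry the same mass on every cell,
\begin{align}
\Big|\int d_{\mathcal{X}}(x,y)^p\,\pi^z(dx,dy)-\int d_{\mathcal{X}}(x,T_n(z,x))^p\,\mu^z(dx)\Big|
\le \frac{C_p}{n}\Big(1+\int d_{\mathcal{X}}(x,y)^{p-1}\,\pi^z(dx,dy)\Big).
\end{align}
The integral on the right is finite because $\mu^z,\nu^z\in\sp_p(\mathcal{X})$ forces $\int d_{\mathcal{X}}(x,y)^p\,\pi^z(dx,dy)<\infty$ (triangle inequality around a fixed base point), hence $\int d_{\mathcal{X}}(x,y)^{p-1}\,\pi^z(dx,dy)<\infty$ by Jensen's inequality; letting $n\to\infty$ gives the claim. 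The sums over $i,j$ converge absolutely throughout, being dominated by this finite integral.
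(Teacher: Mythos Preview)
Your argument is essentially correct, but it follows a genuinely different route from the paper's proof. The paper does not build the maps $T_n$ by hand via quantile transforms; instead it invokes two black-box results from \cite{beiglbock2022denseness}. First, their Theorem~2.3 yields a jointly Borel family of isomorphisms $T^z:\mathcal{X}\times[0,1]\to\mathcal{X}\times[0,1]$ such that $(\id,T^z)_\#(\mu^z\otimes\lambda)$ projects to $\pi^z$; second, their Proposition~3.25 yields jointly Borel isomorphisms $\Phi_n^z:\mathcal{X}\to\mathcal{X}\times[0,1]$ compatible with a fixed partition $\mathcal{R}_n$ of vanishing mesh. The maps $T_n^z=P_{\mathcal{X}}\circ T^z\circ\Phi_n^z$ are then automatically Borel in $(z,x)$, and one checks that the induced couplings $\pi_n^z$ agree with $\pi^z$ on all products of cells in $\mathcal{R}_n$; convergence of the cost then follows from $\pi_n^z\to\pi^z$ in $\w_p$ via \cite[Lemma~2.4]{beiglbock2022denseness}. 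In short, the paper outsources the entire measurability issue to the randomisation machinery of \cite{beiglbock2022denseness}, whereas you reconstruct the same cell-matching idea from scratch using explicit CDF/quantile maps with respect to a Borel total order. Your approach is more self-contained and arguably more transparent, at the price of having to verify joint measurability by hand at each step; the paper's approach is shorter but leans on substantial external results. Both routes exploit the same underlying idea: force $\gamma_n^z$ and $\pi^z$ to agree on the product cells of a refining partition, then let the mesh go to zero.

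Two small remarks on your write-up. The expression $T_n(z,x)=\sum_i\ind{\{x\in\widehat A_i(z)\}}S_i^z(x)$ is meaningless in a general Polish space; you mean the piecewise definition $T_n(z,x)=S_i^z(x)$ for $x\in\widehat A_i(z)$. And the citation of \cite[Corollary~5.22]{villani2021topics} for the measurability of your CDF/quantile constructions is not quite apt: that result is about measurable selection of \emph{optimal} couplings, not about the joint measurability of parametrised probability integral transforms. The latter follows, as you indicate, from the fact that $(z,x)\mapsto\mu^z(\{x'\preceq x\}\cap A)$ is Borel in $z$ and monotone (hence right-continuous after identifying $\mathcal{X}$ with $[0,1]$) in $x$, together with the analogous property of the generalised inverse; this is elementary but should be stated as such rather than attributed to Villani.
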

    \begin{proof}
        It follows from \cite[Theorem 2.3]{beiglbock2022denseness} that there exists a Borel measurable map $T:\mathcal{Z}\times \mathcal{X}\times [0,1]\to \mathcal{X}\times [0,1]$ such that
        \begin{enumerate}[label=(\alph*)]
            \item $T^{z}:=T(z, \cdot):\mathcal{X}\times [0,1]\to \mathcal{X}\times [0,1]$ is a Borel isomorphism, i.e. $T^{z}$ is Borel measurable, bijective and its inverse is also Borel measurable,
            \item for the Lebesgue measure $\lambda$ on $[0,1]$, $(\id, T^{z})_{\#}(\mu^{z}\otimes \lambda)\in \cpl(\mu^{z}\otimes \lambda, \nu^{z}\otimes \lambda)$ and its projection on $\mathcal{X}\times \mathcal{X}$ is $\pi^{z}$.
        \end{enumerate}
        Now let us choose a partition $\mathcal{R}_n$ of $\mathcal{X}$ that consists of at most countable sets and satisfies $\lim_{n\to \infty}\norm{\mathcal{R}_n}=0$. We apply \cite[Proposition 3.25]{beiglbock2022denseness} to find Borel measurable maps $\Phi_n:\mathcal{Z}\times \mathcal{X}\to \mathcal{X}\times [0,1]$ such that
        \begin{enumerate}[label=(\alph*)]
        \setcounter{enumi}{2}
            \item $\Phi_n^{z}:=\Phi_n(z, \cdot):\mathcal{X}\to \mathcal{X}\times [0,1]$ is Borel isomorphism,
            \item $(\Phi^z_n)_{\#}(\mu^z|_A)=(\mu^z|_{A})\otimes \lambda$ for all $A\in \mathcal{R}_n$ where $\mu^z|_A(\cdot):=\mu^z(\,\cdot \cap A)$.
        \end{enumerate}
        Let $P_{\mathcal{X}}$ and $P_{\mathcal{Z}}$ be projection maps on $\mathcal{X}$ and $\mathcal{Z}$ respectively and define a map $T_n:\mathcal{Z}\times \mathcal{X}\to \mathcal{X}$ via $T_n = P_{\mathcal{X}}\circ T \circ (P_{\mathcal{Z}}, \Phi_n)$. Note that $\pi^{z}_n:=(\id, T^z_n)_{\#}\mu^{z}\in \cpl(\mu^z, \nu^z)$ by $\text{(b)}$ and $\text{(d)}$. Similarly as in the proof of \cite[Theorem 2.6]{beiglbock2022denseness}, we can show that
        \begin{align}
            \pi^z_n(A\times B)
            =\pi^z(A\times B) \text{ for any } A, B\in \mathcal{R}_n \text{ and } z\in \mathcal{Z}.
        \end{align}
        Indeed, for $A, B\in \mathcal{R}_n$,
        \begin{align}
            \pi^z_n(A\times B)
            =\mu^z(A\cap (T^z_n)^{-1}(B))
            &=\mu^z|_{A}((\Phi^z_n)^{-1}\circ (T^z)^{-1}\circ (P_{\mathcal{X}})^{-1}(B))\\
            &=\mu^z|_{A}\otimes \lambda ((T^z)^{-1}\circ (P_{\mathcal{X}})^{-1}(B))\\
            &=\mu^z\otimes \lambda ((A\times [0,1])\cap (T^z)^{-1}(B\times [0,1]))\\
            &=(\id, T^z)_{\#}(\mu^z\otimes \lambda)((A\times [0,1])\times (B\times[0,1]))\\
            &=\pi^z(A\times B).
        \end{align}
        The first equality comes from the definition of $\pi_n^z$. For the second equality, we use the definition of $T^z_n$. The property $\text{(d)}$ implies the third equality. The fourth and the fifth equality are straightforward from the pushforward operator and the last equality is obtained from the property $\text{(b)}$. Thus, we deduce from \cite[Lemma 2.4]{beiglbock2022denseness} that $\pi^z_n \to \pi^z$ in $\w_p$ on $(\mathcal{X}\times \mathcal{X})^2$. This implies the desired results.
    \end{proof}

% \cite[Corollary 3.2]{beiglbock2023knothe}

\begin{small}
\bibliographystyle{abbrv}
\bibliography{smooth_aw2024}
\end{small}

\end{document}